\documentclass[a4paper,11pt]{article}
\usepackage{amsmath}
\usepackage{amsfonts}
\usepackage{titlesec}
\usepackage{amssymb}
\usepackage{amsthm}
\usepackage{enumitem}

\usepackage{geometry}
\geometry{verbose,a4paper,tmargin=35mm,bmargin=25mm,lmargin=28mm,rmargin=28mm}

\usepackage{color}

\usepackage[utf8]{inputenc}

\usepackage{mathtools}

\DeclareMathOperator{\sech}{sech}

\usepackage{array}
\makeatletter
\newcommand{\thickhline}{%
	\noalign {\ifnum 0=`}\fi \hrule height 1pt
	\futurelet \reserved@a \@xhline
}
\newcolumntype{"}{@{\hskip\tabcolsep\vrule width 1pt\hskip\tabcolsep}}
\makeatother

\usepackage{listings}

\newcommand{\ci}{\mathrm{i}} 

\usepackage{framed}
\usepackage{graphicx}
\usepackage{caption, subcaption}
\usepackage{hyperref}
\usepackage{dsfont}
\usepackage{float}

\newtheorem{theorem}{Theorem}[section]
\newtheorem{corollary}{Corollary}[theorem]
\newtheorem{lemma}[theorem]{Lemma}
\newtheorem{remark}[theorem]{Remark}
\newtheorem{conclusion}[theorem]{Conclusion}

\newcommand{\quotes}[1]{``#1''}

\newcommand{\uLOD}{u_{\text{\tiny LOD}}}
\newcommand{\uOD}{u_{\text{\tiny LOD}}}

\newcommand{\VLOD}{V_{\text{\tiny LOD}}}
\newcommand{\VOD}{V_{\text{\tiny LOD}}}
\newcommand{\LOD}{_{\text{\tiny LOD}}}
\newcommand{\R}{\mathbb{R}}
\newcommand{\C}{\mathbb{C}}
\newcommand{\D}{\mathcal{D}}
\newcommand{\V}{V_2}
\newcommand{\Va}{V_1}
\newcommand{\Vb}{V_2}
\newcommand{\supp}{\text{supp}}
\newcommand{\CN}{\text{\tiny CN }}
\newcommand{\eps}{\varepsilon}

\definecolor{myBlue}{RGB}{113,104,238} 
\definecolor{myGreen}{RGB}{154,205,50} 
\definecolor{myGreen2}{RGB}{114,175,30} 
\definecolor{myRed}{RGB}{180,50,50}  
\definecolor{myOrange}{RGB}{225,92,22} 
\definecolor{lgray}{RGB}{200,200,200} 
\definecolor{llgray}{RGB}{155,155,155} 
\definecolor{lila}{rgb}{0.5,0.0,0.5}

\definecolor{mycolor1}{rgb}{0.00000,0.44700,0.74100}%

\usepackage{diagbox}

\captionsetup{font=footnotesize}

\renewcommand{\thefootnote}{\fnsymbol{footnote}}
\renewcommand{\thefootnote}{\arabic{footnote}}

\makeatletter
\newcommand\footnoteref[1]{\protected@xdef\@thefnmark{\ref{#1}}\@footnotemark}
\makeatother

\usepackage[]{algorithm2e}
\begin{document}

\begin{center}
{\LARGE Superconvergence of time invariants for the Gross--Pitaevskii equation
\renewcommand{\thefootnote}{\fnsymbol{footnote}}\setcounter{footnote}{0}
 \hspace{-3pt}\footnote{The authors acknowledge the support by the Swedish Research Council (grant 2016-03339) and the G\"oran Gustafsson foundation.}}\\[2em]
\end{center}

\begin{center}
{\large Patrick Henning\footnote[1]{Department of Mathematics, Ruhr-University Bochum, DE-44801 Bochum, Germany.}
\footnote[2]{\label{note2}Department of Mathematics, KTH Royal Institute of Technology, SE-100 44 Stockholm, Sweden.} and Johan W\"arneg{\aa}rd\textsuperscript{\ref{note2}}}\\[2em]
\end{center}

\begin{center}
{\large{\today}}
\end{center}

\begin{center}
\end{center}

\begin{abstract}
This paper considers the numerical treatment of the time-dependent Gross--Pitaevskii equation. In order to conserve the time invariants of the equation as accurately as possible, we propose a Crank--Nicolson-type time discretization that is combined with a suitable generalized finite element discretization in space. The space discretization is based on the technique of Localized Orthogonal Decompositions (LOD) and allows to capture the time invariants with 
an accuracy of order $\mathcal{O}(H^6)$ with respect to the chosen mesh size $H$. This accuracy is preserved due to the conservation properties of the time stepping method. Furthermore, we prove that the resulting scheme approximates the exact solution in the $L^{\infty}(L^2)$-norm with order $\mathcal{O}(\tau^2 + H^4)$, where $\tau$ denotes the step size. The computational efficiency of the method is demonstrated in numerical experiments for a benchmark problem with known exact solution.
\end{abstract}

\paragraph*{AMS subject classifications}
35Q55, 65M60, 65M15, 81Q05

\section{Introduction}
The so-called Gross--Pitaevskii equation (GPE) is an important model for many physical processes with applications in, for example, optics \cite{Optics,GravityWaves}, fluid dynamics \cite{FluidReview,DeepWater} and, foremost, quantum physics \cite{Gro61,LSY01,Pit61} where it describes the behavior of so-called Bose-Einstein condensates \cite{BaoNum,PiS03}. For a real-valued function $V(x)$ and a constant $\beta \in \R$, the Gross--Pitaevskii equation 
seeks a complex-valued wave function $u(x,t)$ such that 
$$
\ci \hspace{1pt} \partial_t u = - \triangle u + Vu+\beta |u|^2u  
$$
together with an initial condition $u(x,0)=u_0(x)$. In the context of Bose-Einstein condensates, $u$ describes the quantum state of the condensate, $|u|^2$ is its density, $V$ models a magnetic trapping potential and $\beta$ is a parameter that characterizes the strength and the direction of interactions between particles. 

The GPE is known to have physical time invariants where the mass (number of particles) and the energy are the most important ones. When solving the equation numerically it is desirable to conserve these quantities also in the discrete setting. In fact, the choice of conservative schemes over non-conservative schemes can have a tremendous advantage in terms of accuracy. This observation has been confirmed in various numerical experiments (cf. \cite{NLSComparison,Sanz-Serna}). Practically, the discrete conservation of mass and energy is subject to the choice of the time integrator. Among others, mass conservative time discretizations have been studied in \cite{Wang,zouraris_2001}, time integrators that are mass conservative and symplectic are investigated in  \cite{Akrivis1991,HeM17,Sanz-Serna1988,Tou91,Verwer1984},
energy conservative time discretizations in \cite{KaM99}
and time discretization that preserve mass and energy simultaneously are addressed in  \cite{Akrivis1991,BaC12,BaC13,Besse,BDD18,CCW20,NonlinearCN,H1Est,Sanz-SernaNLCN,Zou20}.
For further discretizations we refer to \cite{ANTOINE20132621,BaoNum,Spectral,Lub08,Tha12b} and the references therein. 

Beside the choice of the time integrator that guarantees the conservation of discrete quantities, the space discretization also plays an important role since it determines the accuracy with which invariants can be represented in the numerical method. For example, a low dimensional $P1$ finite element space typically only yields approximations of the energy of order $\mathcal{O}(H)$, where $H$ is the mesh size. Hence, even if the time integrator preserves the discrete energy exactly, there will always be an error of order $\mathcal{O}(H)$. We shall later present a numerical experiment where this plays a tremendous role.

In the light of this issue, we shall investigate the following question: can we find low dimensional spaces (to be used in the numerical scheme for solving the GPE) so that time invariants, such as mass and energy, can be approximated with very high accuracy in these spaces? It is natural that such spaces need to take the problem specific structure into account in order to ensure that they can capture the invariants as accurately as possible. One construction that allows to incorporate features of a differential operator directly into discrete spaces is known as Localized Orthogonal Decomposition (LOD) and was originally proposed by M{\aa}lqvist and Peterseim \cite{LocalElliptic} in the context of elliptic problems with highly oscillatory coefficients. 

The idea of the LOD is to construct a (localizable) orthogonal decomposition of a high dimensional (or infinite dimensional) solution space into a low dimensional space which contains important problem-specific information and a high-dimensional detail space that contains functions that only have a negligible contribution to the solution that shall be approximated. The orthogonality in the construction of the decomposition is with respect to an inner product that is selected based on the differential equation to be solved. After the LOD is constructed, the low dimensional part can be used as a solution space in a Galerkin method. The classical application of this technique are multiscale problems with low regularity, where it is possible to recover linear convergence rates without resolution conditions on the mesh size, i.e., without requiring that the mesh size is small enough to resolve the variations of the multiscale coefficient \cite{HeM14,HeP13,HeMaPe14b,LocalElliptic}. 

The LOD has been successfully applied to numerous differential equations where we exemplarily mention parabolic problems \cite{MaPer17,MaPer18}, hyperbolic problems \cite{AbH17,MaierPet19,PetSch17}, mixed problems \cite{HHM16},
linear elasticity \cite{HePer16}, linear and nonlinear eigenvalue problems \cite{HeMaPe14,MaPet15,MaP17} and Maxwell's and Helmholtz equations \cite{GaHeVe18,GaP15,HePer20,Pet17,Verfuerth2017,VerfOhl17}. The linear Schr\"odinger equation with multiscale potentials was recently addressed in \cite{WuZ21}. An introduction to the methodology is given in \cite{MaP21,Peterseim2016341} and implementation aspects are explained in \cite{ENGWER2019123}.

As opposed to many other multiscale methods, the LOD method greatly improves the accuracy order when applied to single-scale problems with high regularity (cf. \cite{Mai21,MaPet15}). The aim of this paper is to exploit this increase in accuracy to solve challenging and nonlinear time dependent partial differential equations, such as the Gross--Pitaevskii equation, on long time scales. Since the construction of the LOD space is time-independent and linear, its assembly is a one-time overhead that can be done efficiently by solving small linear elliptic problems in parallel.
Besides the construction of a modified Crank--Nicolson (CN) type time integrator that is combined with an LOD space discretization, the novel theoretical contributions of this paper are a proof of superconvergence (of order $6$ with respect to the mesh size of $H$) for time invariants of the GPE in the LOD space, and $L^\infty(L^2)$-convergence rates of order $\mathcal{O}(\tau^2+H^4)$ of the proposed scheme (where $\tau$ is the time step size).
To illustrate the strong performance of our method we present a numerical test case 
that is highly sensitive to energy perturbation and which is therefore very hard to solve on long time scales. Applying the proposed method we are able to easily solve the problem with a resolution on par with a classical $P1$ element space of $2^{21}$ degrees of freedom (i.e., the resolution on which the LOD basis functions are represented) and $2^{24}$ time steps ($\sim10^{13.5}$) on a regular computer. This resolution allows us to capture the correct solution well on long time scales. Solving the problem with standard $P1$ finite elements on meshes with a similar resolution would take months, whereas our computations ran within a few hours with the CN-LOD.

$\\$
{\bf Outline:} In Section \ref{section-LOD} we recall the basic concept of the LOD and we illustrate how superconvergence can be achieved under certain regularity assumptions. In Section \ref{section-GPE-invariants} we introduce the analytical setting of this paper and present important time invariants of the GPE. Superconvergence of the time invariants in the LOD space is afterwards studied in Section \ref{Functionals}. In Sections \ref{section-std-CN-LOD} and \ref{section-mod-CN-LOD} we formulate two versions of the CN-LOD and we present our analytical main results. Details on the implementation are given in Section \ref{section-implementation} and the numerical experiments are presented in Sections \ref{section-numerical-benchmark} and \ref{section-numerics-2D}. Finally, in Section \ref{section-proof-main-result} we prove our main results, which is the major part of this paper.

\section{Localized Orthogonal Decomposition}
\label{section-LOD}

The key to the superconvergence that we shall prove in this paper is due to the choice of a suitable generalized finite element space for discretizing the nonlinear Schr\"odinger equation. The spaces are known as Localized Orthogonal Decomposition (LOD) spaces. In this section we start with a brief introduction to the LOD in a general setting that serves our purposes. Here we recall important results that will be crucial for our error analysis. For further details on the proofs and for results in low-regularity regimes we refer to \cite{HeM14,HeP13,LocalElliptic,Peterseim2016341}.

Throughout this section, we assume that ${\mathcal{D}} \subset \R^d$ (for $d=1,2,3$) is a bounded convex domain with polyhedral boundary. On $\mathcal{D}$, the Sobolev space of complex-valued, weakly differentiable functions with zero trace on $\partial \mathcal{D}$ and $L^2$-integratable partial derivatives is as usual denoted by $H^1_0(\mathcal{D}):=H^1_0(\mathcal{D},\C)$. 
For brevity, we shall denote the $L^2$-norm of a function $v \in L^2(\mathcal{D}):=L^2(\mathcal{D},\C)$ by $\| v \|$. The $L^2$-inner product is denoted by $\langle v, w  \rangle=\int_{\mathcal{D}} v(x) \hspace{2pt} \overline{w(x)} \hspace{2pt} dx$. Here, $\overline{w}$ denotes the complex conjugate of $w$. 

\subsection{Ideal LOD space and approximation properties}
\label{subsection-Ideal_LOD}

Let $a(\cdot,\cdot )$ be an inner product on $H^1_0(\D)$ and let $f \in H^1_0(\D) \cap H^2(\D)$ be a given source term. We consider the problem of finding $u\in H^1_0(\D)$ that solves the variational equation:
\begin{align*}
a(u,v) = \langle f,v\rangle  \qquad  \mbox{for all }
 v\in H^1_0(\D).
\end{align*} 
The problem admits a unique solution by the Riesz representation theorem. The LOD aims at constructing a discrete (low dimensional) space that allows to approximate $u$ with high accuracy. 
For that, we start from a low dimensional (i.e., coarse) space $V_H\subset H^1_0(\D)$, which is given by a standard $P1$ Lagrange finite element space on a quasi-uniform simplicial mesh on $\D$. The mesh size is denoted by $H$ and $\mathcal{T}_H$ is the corresponding simplicial subdivision of $\mathcal{D}$, i.e., $\bigcup_{K\in \mathcal{T}_H} K = \overline{\D} $ (cf. \cite{BrS08}). It is well known that if $u\in H^2(\D)$, then the Galerkin approximation $u_H\in V_H$ of $u$ has an optimal order convergence with
$$
\| u - u_H \| + H \| u - u_H \|_{H^1(\D)} \le C H^2 \| u \|_{H^2(\D)},
$$
for some generic constant $C>0$ that only depends on the regularity of the mesh $\mathcal{T}_H$. It is natural to ask if there is a low dimensional subspace of $H^1_0(\D)$ that has the same dimension as $V_H$, but much better approximation properties. For that, we need to enrich $V_H$ with information from the differential operator. 

In the first step to construct such a space, we consider the $L^2$-projection $P_H : L^2(\D) \rightarrow V_H$, i.e., for $w\in L^2(\D)$ the projection $P_H(w) \in V_H$ fulfills
$$
\langle P_H(w) , v_H \rangle = \langle w , v_H \rangle \qquad \mbox{for all } v_H \in V_H.
$$
On quasi-uniform meshes it can be shown that this $L^2$-projection is actually $H^1$-stable (cf. \cite{BaY14}) and hence the kernel of the $L^2$-projection in $H^1_0(\D)$, i.e.,
$$
W := \ker(P_H) = \{ w \in H^1_0(\D) | \hspace{3pt} P_H(w) = 0 \} ,
$$
is a closed subspace of $H^1_0(\D)$ that we call the {\it detail space}. We immediately have the ideal $L^2$-orthogonal splitting $V_H\oplus W=H^1_0(\D)$. In the next step, we shall modify $V_H$ by enriching it with \quotes{details} (i.e., with functions from $W$).  More precisely, in order to account for problem specific structure while retaining the low dimensionality of the space $V_H$, we introduce the $a(\cdot,\cdot)$-orthogonal complement of the detail space,
\begin{align}
\label{def-OD-space}
\VOD = \{v\in H^1_0(\D)\hspace{2pt} | \hspace{4pt} a(v,w) = 0\mbox{ for all } w \in W \}.
\end{align}
By construction we have $\dim(\VOD) = \dim(V_H):=N_H$ as desired. We now have another ideal splitting of $H^1_0(\D)$ which is of the form $H^1_0(\D) = \VOD \oplus W $, where $ \VOD$ and $W$ are $a(\cdot,\cdot)$-orthogonal.
To quantify the approximation properties of $\VOD$, we denote by $\uOD$ the Ritz projection of $u$ onto $\VOD$, i.e., $\uOD \in \VOD$ is the unique solution to
  \begin{align}
  \label{LOD-var-problem}
a(\uOD,v) = a(u,v) \qquad \mbox{for all } v\in \VOD.
\end{align}
Consequently, $a(\uOD-u,v)=0$ for all $ v\in \VOD$, which allows us to conclude $\uOD-u\in W$ using the $a(\cdot,\cdot)$-orthogonality of $\VOD$ and $W$. The definition of $\VOD$ also entails a useful identity that we shall refer to as the LOD-orthogonality, namely that for any $w\in W$ we have
 \vspace{-\baselineskip}
\begin{align}
\label{LOD-orthogonality}
a(u-\uOD,w) = \langle f,w\rangle.
\end{align} 
A neat consequence of this is that if $f$ has enough regularity then $\|u-\uOD\| \le C \hspace{2pt} H^4$ for some constant $C>0$ that depends on $f$ and the coercivity constant of $a(\cdot,\cdot)$. To see this, recall that $u-\uOD\in W$, wherefore $P_H(u-\uOD) = 0$ by definition of $W$. From this it follows that 
\begin{align} \label{Proj}
\|u-\uOD\| = \|u-\uOD-P_H(u-\uOD)\| \leq C H\|u-\uOD\|_{H^1(\D)},
\end{align}
using the standard approximation properties of the $L^2$-projection $P_H$. If $\alpha>0$ denotes the coercivity constant of $a(\cdot,\cdot)$ then the variational equation \eqref{LOD-orthogonality} gives us
 \begin{align}
\label{LOD-estimate-p1}
\alpha \| u-\uOD\|^2_{H^1(\D)} \leq  a(u-\uOD,u-\uOD) &= \langle f,u-\uOD\rangle .
\end{align}
Using again $u-\uOD \in W$, allows us to play similar tricks on the above right-hand side,
\begin{align*}
\langle f,u-\uOD\rangle&=\langle f-P_H(f),u-\uOD\rangle \\
& = \langle f-P_H(f), u-\uLOD-P_H(u-\uOD)\rangle \\
& \leq C\hspace{2pt} H^2  \|f\|_{H^2(\D)} \hspace{2pt} H \hspace{2pt} \|u-\uOD\|_{H^1(\D)} .
\end{align*}
Note that we used the regularity of $f$, together with standard error estimates for the $L^2$-projection. 
In conclusion we have together with \eqref{LOD-estimate-p1} that 
\begin{align}
\label{H1-conv}
\| u-\uOD\|_{H^1(\D)} \leq C\hspace{2pt} H^3 \|f\|_{H^2(\D)}.
\end{align}
Combining this with \eqref{Proj} results in a $\mathcal{O}(H^4)$-convergence of the $L^2$-error,
\begin{align}\label{L2-conv}
\|u-\uOD\| \leq CH\|u-\uOD\|_{H^1} \leq CH^4.
\end{align}
For improved convergence orders by using higher order finite element spaces for $V_H$, we refer to \cite{Mai21}.

Finally, we note that by construction of $\uOD$ standard energy estimates yield the $H^1$-bound 
\vspace{-\baselineskip}
\begin{align*}
\| \uOD \|_{H^1(\D)} \le C \| f\|,
\end{align*}
for some constant $C>0$ that depends on $\D$ and on the coercivity constant of $a(\cdot,\cdot)$. 

\subsection{Localization of the orthogonal decomposition}
\label{subsection-localization-LOD}

Practically, it is not efficient to work with the full LOD space, $\VOD$, since it has basis functions with a global support. This makes the computation of the basis functions expensive and it leads to dense stiffness matrices in Galerkin discretizations. Fortunately, the basis functions are known to decay exponentially fast outside of small nodal environments, which is why they can be accurately approximated by local functions. In the following we sketch the localization strategy proposed and analyzed in \cite{HeM14,HeP13} in order to approximate the space $\VOD$ efficiently and accurately.

For that, let $\ell \in \mathbb{N}_{>0}$ denote the localization parameter that determines the support of the arising basis functions (which will be of order $\mathcal{O}(\ell H)$). First, we define for any simplex $K \in \mathcal{T}_H$ the corresponding $\ell$-layer patch around $K$ iteratively by 
\begin{align*}
S_\ell(K) & : = \bigcup \{T\in \mathcal{T}_H | \hspace{3pt}T\cap S_{\ell-1}(K) \not= \emptyset \} \qquad \mbox{and} \qquad S_0(K) := K.
\end{align*} 
This means that $S_\ell(K)$ consists of $K$ and $\ell$ layers of grid elements around it. The restriction of $W = \ker(P_H) $ on  $S_\ell(K)$ is given by
$$
W(\hspace{2pt}S_\ell(K)\hspace{2pt}) :=  \{ w \in H^1_0(\hspace{2pt}S_\ell(K)\hspace{2pt}) \hspace{2pt} | \hspace{3pt} P_H(w) = 0 \}  \subset W.
$$
For a given standard (coarse) finite element function $v_H \in V_H$ we can construct a correction so that the corrected function is almost in the $a(\cdot,\cdot)$-orthogonal complement of $W$. This is achieved in the following way. Given $v_H \in V_H$ and $K \in \mathcal{T}_H$ with $K \subset \mbox{\rm supp}(v_H)$ find $Q_{K,\ell} \in W(\hspace{2pt}S_\ell(K)\hspace{2pt})$ such that
\begin{align}
\label{local-LOD-problems}
a( Q_{K,\ell}(v_H) , w ) = - a_K(v_H , w ) \qquad \mbox{for all } w \in W(\hspace{2pt}S_\ell(K)\hspace{2pt}).
\end{align}
Here, $a_K( \cdot , \cdot  )$ is the restriction of $a(\cdot,\cdot)$ on the single element $K$. Since the problem only involves the patch $S_\ell(K)$ it is a local problem and hence cheap to solve. With this, the corrected function is defined by
$$
R_{\ell}(v_H) :=  v_H + \sum_{K \in \mathcal{T}_H}Q_{K,\ell}(v_H).
$$
Practically, $R_{\ell}(v_H)$ is computed for a set of nodal basis functions of $V_H$. We set the {\it localized} orthogonal decomposition space (as an approximation of the ideal space $\VOD$) to 
\begin{align}
\label{LOD-space}
V_{\ell,{\text{\tiny LOD}}} := \{ R_{\ell}(v_H) \hspace{2pt} | \hspace{3pt} v_H \in V_H \}.
\end{align}
Observe that if \quotes{$\ell=\infty$} is so large that $S_\ell(K) = \D$ then we have with \eqref{local-LOD-problems}
$$
a( R_{\infty}(v_H) , w ) = \sum_{K \in \mathcal{T}_H} \left( a_T(v_H , w )  + a( Q_{K,\infty}(v_H) , w )  \right)
= 0 \qquad \mbox{for all } w \in W.
$$
Hence, the functions $R_{\infty}(v_H)$ span indeed the $a(\cdot,\cdot)$-orthogonal complement of $W$, i.e., they span the ideal space $\VOD$. For small values of $\ell$ one might wonder about the approximation properties of $V_{\ell,{\text{\tiny LOD}}} $ compared to $V\LOD$. This question is answered by the following lemma which can be proved analogously to \cite[Conclusion 3.9]{HeM14} together with the ideal higher order estimates \eqref{H1-conv} and \eqref{L2-conv}.
\begin{lemma}\label{lemma-LOD-estimates}
Let the general assumptions of this section hold and assume that $f \in H^1_0(\D) \cap H^2(\D)$. Let the LOD space $V_{\ell,{\text{\tiny LOD}}}$ be given by \eqref{LOD-space} and let $u_{\ell,{\text{\tiny LOD}}} \in V_{\ell,{\text{\tiny LOD}}}$ denote the Galerkin approximation of $u$, i.e., the solution to
$$
a( u_{\ell,{\text{\tiny LOD}}} ,  v ) = \langle f , v \rangle \qquad \mbox{for all } v \in V_{\ell,{\text{\tiny LOD}}}. 
$$
There exits a generic constant $\rho>0$ (that depends on $a(\cdot,\cdot)$, but not on $\ell$ or $H$) such that
\begin{align}
\label{lemma-LOD-estimates-eqn}
\begin{split}
\| u - u_{\ell,{\text{\tiny LOD}}} \| &\le C (H^4 + \exp(-\rho \ell )) \| f\|_{H^2(\D)}
\qquad
\mbox{and}\\
\| u - u_{\ell,{\text{\tiny LOD}}} \|_{H^1(\D)} &\le C (H^3 + \exp(-\rho \ell )) \| f\|_{H^2(\D)}.
\end{split}
\end{align}
Here, the constant $C>0$ can depend on the coercivity and continuity constants of $a(\cdot,\cdot)$ and it can depend on $\D$, but it does not depend on $\ell$, $H$ or $u$ itself.
\end{lemma}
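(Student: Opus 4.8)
The plan is to reduce the statement to the two ideal estimates \eqref{H1-conv} and \eqref{L2-conv}, combined with the exponential decay of the localized correctors $Q_{K,\ell}$ toward the ideal ones $Q_{K,\infty}$ as established in \cite{HeM14,HeP13}. First I would record a few structural facts. Since $a(\cdot,\cdot)$ is coercive on $H^1_0(\D)$, every local problem \eqref{local-LOD-problems} is uniquely solvable and the operators $R_\ell, R_\infty$ are well defined and linear; moreover $R_\ell$ is injective on $V_H$ (if $R_\ell(v_H)=0$ then $v_H \in V_H \cap W = \{0\}$), so $\dim V_{\ell,{\text{\tiny LOD}}} = N_H$. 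Finally, since every corrector belongs to $W = \ker(P_H)$, applying $P_H$ to $R_\infty$ shows $\uOD = R_\infty(P_H \uOD)$, i.e. the coarse part of the ideal Ritz projection is $u_H := P_H(\uOD) \in V_H$.

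For the $H^1$-estimate I would invoke C\'ea's lemma (using coercivity and continuity of $a(\cdot,\cdot)$), which gives $\| u - u_{\ell,{\text{\tiny LOD}}} \|_{H^1(\D)} \le C \| u - R_\ell(u_H) \|_{H^1(\D)} \le C \bigl( \| u - \uOD \|_{H^1(\D)} + \| \uOD - R_\ell(u_H) \|_{H^1(\D)} \bigr)$. The first term is bounded by $C H^3 \|f\|_{H^2(\D)}$ via \eqref{H1-conv}. For the second, $\uOD - R_\ell(u_H) = \sum_{K \in \mathcal{T}_H}(Q_{K,\infty} - Q_{K,\ell})(u_H)$, and the corrector decay estimate of \cite{HeM14,HeP13} — of the form $\| (Q_{K,\infty} - Q_{K,\ell})(u_H) \|_{H^1(\D)} \le C \exp(-\rho \ell)\, \| u_H \|_{H^1(K)}$ — summed over $K$ using the finite overlap of the patches $S_\ell(K)$ yields $\| \uOD - R_\ell(u_H) \|_{H^1(\D)} \le C \exp(-\rho \ell)\, \| u_H \|_{H^1(\D)}$ (after relabelling $\rho$ to absorb a polynomial-in-$\ell$ factor). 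Since $u_H = P_H(\uOD)$, the $H^1$-stability of $P_H$ on quasi-uniform meshes together with the energy bound $\| \uOD \|_{H^1(\D)} \le C \|f\|$ give $\| u_H \|_{H^1(\D)} \le C \|f\|_{H^2(\D)}$, and the $H^1$-estimate follows.

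For the $L^2$-estimate I would run an Aubin--Nitsche duality argument directly on $e_\ell := u - u_{\ell,{\text{\tiny LOD}}}$. Let $z \in H^1_0(\D) \cap H^2(\D)$ solve $a(v,z) = \langle e_\ell, v \rangle$ for all $v \in H^1_0(\D)$; elliptic regularity on the convex domain $\D$ gives $\| z \|_{H^2(\D)} \le C \| e_\ell \|$. The Galerkin orthogonality $a(e_\ell, v) = 0$ for all $v \in V_{\ell,{\text{\tiny LOD}}}$ yields $\| e_\ell \|^2 = a(e_\ell, z) = a(e_\ell, z - v)$, hence $\| e_\ell \| \le C \| e_\ell \|_{H^1(\D)} \inf_{v \in V_{\ell,{\text{\tiny LOD}}}} \| z - v \|_{H^1(\D)}$. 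To estimate the infimum I would take $v = R_\ell(P_H z\LOD)$, where $z\LOD \in \VOD$ is the ideal Ritz projection of $z$. Mimicking the argument of \eqref{Proj}--\eqref{LOD-estimate-p1} for $z - z\LOD \in W$ — now only the $L^2$-regularity of the source $e_\ell$ is available, so only one power of $H$ is gained — gives $\| z - z\LOD \|_{H^1(\D)} \le C H \| e_\ell \|$, while the corrector decay estimate gives $\| z\LOD - R_\ell(P_H z\LOD) \|_{H^1(\D)} \le C \exp(-\rho\ell)\, \| z \|_{H^1(\D)} \le C \exp(-\rho\ell)\, \| e_\ell \|$. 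Thus $\| e_\ell \| \le C(H + \exp(-\rho\ell)) \| e_\ell \|_{H^1(\D)}$, and inserting the $H^1$-estimate and absorbing the mixed terms $H \exp(-\rho\ell)$, $H^3 \exp(-\rho\ell)$, $\exp(-2\rho\ell)$ into $\exp(-\rho\ell)$ (valid for $H \lesssim 1$) produces $\| e_\ell \| \le C(H^4 + \exp(-\rho\ell)) \|f\|_{H^2(\D)}$.

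The genuinely technical ingredient — the exponential decay of $Q_{K,\ell}$ toward $Q_{K,\infty}$ — is imported verbatim from \cite{HeM14,HeP13}, so the remaining work is bookkeeping: checking that feeding the higher-order ideal rates $\mathcal{O}(H^3)$ and $\mathcal{O}(H^4)$ (which rely on $f \in H^1_0(\D) \cap H^2(\D)$ and, for the dual problem, on convexity of $\D$) through the localization machinery does not spoil the $\ell$-uniformity of the constants, and that the polynomial-in-$\ell$ prefactors can be swallowed by a slightly smaller $\rho$. I expect this absorption step, and keeping track of the asymmetric use of the ideal $H^1$-estimate (three powers of $H$ for the primal problem, one power for the dual problem whose right-hand side is only $L^2$-regular), to be the only points requiring care.
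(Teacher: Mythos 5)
The paper does not actually write out a proof of this lemma---it only remarks that the result follows analogously to \cite[Conclusion 3.9]{HeM14} combined with the ideal higher-order estimates \eqref{H1-conv} and \eqref{L2-conv}---and your proposal is a correct expansion of exactly that strategy (C\'ea's lemma plus exponential corrector decay for the $H^1$-bound, Aubin--Nitsche with a localized test function and the one-power-of-$H$ gain from $z-z\LOD\in W$ for the $L^2$-bound). The only blemish is the intermediate display $\| e_\ell \| \le C \| e_\ell \|_{H^1(\D)} \inf_{v} \| z - v \|_{H^1(\D)}$, which should read $\| e_\ell \|^2 \le C \| e_\ell \|_{H^1(\D)} \inf_{v} \| z - v \|_{H^1(\D)}$; the conclusion you draw from it in the next line is nevertheless the correct one.
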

Selecting $\ell \ge 4 |\log(H)| / \rho$ ensures that the optimal convergence rates (of order $\mathcal{O}(H^4)$ for the $L^2$-error and order $\mathcal{O}(H^3)$ for the $H^1$-error) are preserved. Practically $\rho$ is unknown, but it is a common observation in the literature that small values of $\ell$ suffice to obtain an optimal order of accuracy w.r.t. the mesh size $H$ (cf. \cite{HeM14,HeP13}). The same observation is made in our numerical experiments in Section \ref{ConvergenceEnergySection}.

In the following, our error analysis will be carried out in the ideal LOD setting of Section \ref{subsection-Ideal_LOD}, which means that we will not study the influence of the truncation and hence disregard the exponentially decaying error term.

\begin{remark}
The estimates in Lemma \ref{lemma-LOD-estimates} can be refined. For example, the exponentially decaying term will typically only scale with the $L^2$-norm of $f$ and not with the full $H^2$-norm. Furthermore, the decay rate for the $L^2$-error is faster than for $H^1$-error. Since this is not important for our analysis and the application of the results to the Gross--Pitaevskii equation, we decided to only present the more compact estimates \eqref{lemma-LOD-estimates-eqn}.
\end{remark}

For details on the practical implementation of the LOD, we refer to \cite{ENGWER2019123}.

\section{Gross--Pitaevskii equation and time invariants}
\label{section-GPE-invariants}
In this section we present the precise analytical setting of this paper, by introducing the equation and by describing some of its most important features that will come into play in the numerical example in Section \ref{section-numerical-benchmark}.

In the following
\begin{enumerate}[label={(A\arabic*)}]
\item\label{A1} $\D \subset \R^d$, with $d=1,2,3$, denotes a convex polygon which describes the physical domain. 
\item\label{A2} The trapping potential $V \in L^{\infty}(\mathcal{D};\R)$ is real and nonnegative and
\item\label{A3} $\beta\ge 0$ denotes a repulsion parameter that characterizes particle interactions.
\end{enumerate}
Given a final time $T>0$ and an initial value $u^0 \in H^1_0(\D)$, we consider the defocussing Gross--Pitaevskii equation (GPE), which seeks 
$$
u \in L^{\infty}([0,T],H^1_0({\mathcal{D}})) 
\qquad
\mbox{and}
\qquad
\partial_t  u \in L^{\infty}([0,T],H^{-1}({\mathcal{D}}))
$$
such that $u(\cdot,0)=u^0 $ and
\begin{eqnarray}
\label{model-problem}\ci \partial_t u = - \triangle u 
+  V \hspace{1pt} u + \beta |u|^2 \hspace{1pt} u
\end{eqnarray}
in the sense of distributions. The problem is locally well-posed, i.e., for any initial value $u^0 \in H^1_0(\D)$ there exists a time $T>0$ (that can depend on $\| u^0 \|_{H^1(\D)}$) so that the GPE \eqref{model-problem} admits at least one solution. This solution is unique in $1D$ and $2D$. For corresponding proofs we refer to the textbook by Cazenave \cite[Chapter 3]{Cazenave}. To the best of our knowledge, uniqueness in $3D$ is still open in the literature. In 1D, the solution is also global for any initial value (cf.  \cite[Remark 3.5.4]{Cazenave}). In 2D and 3D, the solution can be global for sufficiently small initial values (cf.  \cite[Corollary 3.6.2]{Cazenave} for a corresponding 2D result), however, in the focussing regime, i.e., for $\beta < 0$, or for negative or sign-changing potentials, the solutions are typically no longer global.

For optimal convergence rates in our error analysis we require some additional regularity assumptions. In the following we shall assume that the potential $V$ and the initial value $u^0$ are sufficiently smooth, that is 
\begin{enumerate}[resume,label={(A\arabic*)}]
\item\label{A4} $\quad$ $V \in H^{2}(\mathcal{D};\R)$ and
\item\label{A5} $\quad$ $u^0 \in H^1_0(\D) \cap H^4(\D) \qquad \mbox{with } \triangle u^0 \in H^1_0(\D).$
\end{enumerate}
Observe that the assumption \ref{A5} makes a natural consistency statement that can be either mathematically justified with the structure of the equation, i.e., we have $\triangle u(t)  =  V \hspace{1pt} u(t) + \beta |u|^2 \hspace{1pt} u(t) - \ci \partial_t u(t) \in H^1_0(\D)$ for any sufficiently smooth solution $u$, or it can be physically justified by the typical exponential confinement of trapped Bose-Einstein condensates.

Finally, we also require some regularity for $u$, where we assume that
\begin{enumerate}[resume,label={(A\arabic*)}]
\item\label{A6}  $\quad$ $\partial_t^{(k)} u \in L^2(0,T;H^4(\mathcal{D}) \cap H^1_0(\D)) \quad \mbox{for } 0\le k \le 3$.
\end{enumerate}
In \cite[Lemma 3.1]{NonlinearCN} it was pointed out that any solution that fulfills the above regularity requirements must be unique, which is relevant for the $3D$-case where uniqueness is still open in general. 

The GPE possesses several time invariants of which arguably the two most important ones are the mass (or number of particles) $M$ and the energy $E$, defined by
\begin{align}
&M[u] := \int_\D |u(x,t)|^2 \hspace{2pt} dx,  \label{Mass} \\
& E[u] := \int_\D |\nabla u(x,t)|^2 +V(x)|u(x,t)|^2 + \frac{\beta}{2}|u(x,t)|^4 \hspace{2pt}dx. \label{Energy}
\end{align}
Both quantities are constant in $t$, i.e., they are preserved for all times and in particular we have $M[u^0]=M[u]$ and $E[u^0]=E[u]$. The mass conservation is easily verified by  testing with $u$ in the variational formulation of \eqref{model-problem} and taking the imaginary part afterwards. The energy conservation is seen by testing with $\partial_t u$ instead and then taking the real part afterwards. Formally the latter argument requires $\partial_t u(t) \in H^1_0(\mathcal{D})$ to be rigorous, however, the property still holds without this regularity assumption and can be obtained as a by-product of the existence proof (cf. \cite[Chapter 3]{Cazenave}). 
The momentum, $P$, of $u$ is defined by
\begin{align}\label{Momentum}
P[u] := \int_{\D} 2\Im\big(\overline{u(x,t)} \nabla u(x,t) \big) \hspace{2pt}dx. \hspace{99pt}
\end{align}
Note that the momentum is a vector-valued quantity and that $\Im$ denotes the imaginary part of the expression.  The center of mass $ X_c[u]$ evolves with a velocity that is determined by the momentum, more precisely we have,
\begin{align}\label{Center}
X_c[u] := \int_{\D} x|u(x,t)|^2 dx \quad \text{and}\quad \partial_t X_c[u] =  \hspace{2pt} P[u(t)].  \hspace{25pt}
\end{align}
In particular, if the momentum is vanishing, then the center of mass is conserved. We can test in the variational formulation of \eqref{model-problem} with $\nabla u$ and take the real part to find, provided $u\in H^2_0(\D)$, that over time the momentum changes as:
\begin{align*}
\partial_t P[u](t) = -2\int_{\D} |u(x,t)|^2 \hspace{2pt}\nabla V(x) \hspace{2pt} dx.\hspace{110pt}
\end{align*}
Thus, in the absence of a potential, i.e., for $V(x)=0$, we also have conservation of momentum if $u$ decays sufficiently rapidly near the boundary.

\section{Super-approximation of energy, mass and momentum} 
\label{Functionals}

In the last section we saw that Gross--Pitaevskii equations have important time invariants; it is therefore natural to seek a time discretization that conserves these invariants. However, also the spatial discretization plays a crucial  role here. In fact, in the first step, the given physical initial value has to be projected/interpolated into a finite dimensional (discrete) space. This introduces an error that affects the actual values for the energy, mass and other invariants. Hence, even if a perfectly conservative time stepping method is chosen (up to machine precision), it will also conserve the size of initial discretization errors. Consequently, this limits the accuracy with which the time invariants can be conserved.

In this section we will study this initial discretization error that appears when projecting $u^0$ onto the LOD space introduced in Section \ref{section-LOD}. We will show that the order of accuracy with which the correct values for energy, mass and momentum are conserved, is even higher than what we would expect from the superconvergence results in Lemma \ref{lemma-LOD-estimates}. To be precise, we make the important observation that for the projected initial value in the LOD-space, $u^0\LOD$, functional outputs converge with $6$th order in the mesh size $H$. 
This is a rather surprising upshot as it holds for general classes of nonlinear functionals, and in particular
for all of the above mentioned time invariants. The conservation of (discrete) time invariants itself is then subject to a suitable time integrator, which is the topic of the subsequent section.

In order to be able to apply the abstract results presented in Section \ref{section-LOD}, we first need to decide how to select the inner product $a(\cdot,\cdot)$ in the LOD. For that we split the potential $V$ into two contributions $\Va$ and $\Vb$, so that 
\begin{enumerate}[resume,label={(A\arabic*)}]
\item\label{splitting-potential} $\quad$ $V = \Va+\Vb, \qquad 
\mbox{where } \hspace{5pt} \Va\ge 0; \hspace{10pt} \mbox{and}  \hspace{10pt} \Va, \Vb \in H^2(\D).$
\end{enumerate}
Practically, the splitting is chosen in such a way that $\Vb$ is sufficiently smooth and such that 
\begin{align}
\label{def-a-innerproduct}
a(v,w) := \int_{\D} \nabla v \cdot \overline{\nabla w} + \Va \hspace{2pt} v  \hspace{2pt}  \overline{w} \hspace{2pt} dx
\end{align}
defines an inner product, which hence can be used to construct a corresponding LOD-space.

\begin{remark}[Motivation for $\Va$ and $\Vb$]
\label{remark-structure-aVaVb}
From a computational point of view it makes sense to chose $\Va$ such that the LOD basis functions become (almost) independent of $x$. Looking at the structure of the local problems \eqref{local-LOD-problems} we can see that if $a(\cdot,\cdot)$ has a certain uniform or periodic structure, then it is enough to solve for just a few representative LOD basis functions whereas the remaining basis functions are simply translation of the computed ones. Practically, this avoids a lot of unnecessary computations and hence reduces the CPU time significantly. In terms of physical applications, we make two relevant examples:
\begin{itemize}
\item If $V$ is a harmonic trapping potential of the form $V(x)=\frac{1}{2}\sum_{j=1}^d \gamma_j^2 x_j^2$, with real trapping frequencies $\gamma_j \in \R_{>0}$, a reasonable choice is to select $\Va = 0$ and $\Vb=V$.
\item  Let $V$ be a periodic optical lattice (Kronig-Penney-type potential) of the form 
$$
V(x) = \sum_{j=1}^d \alpha_j \sin\left(\frac{2\pi x_j}{\lambda}\right)^2,
$$
where $\lambda$ is the wavelength of the laser that generates the lattice and where $\alpha_j$ is the amplitude of the potential in direction $x_j$. In this setting we would align the coarse mesh $\mathcal{T}_H$ with an integer multiple of the lattice period $\lambda/2$ and select $\Va=V$ and consequently $\Vb=0$. Typically it is very valuable to incorporate information about the optical lattice directly onto the LOD space $\VLOD$.
\end{itemize}
\end{remark}
With this, we consider the given initial value $u^0 \in H^1_0(\D) \cap H^{2}(\D)$ with $\triangle u^0 \in H^1_0(\D) \cap H^2(\D)$.
Consequently we observe
\begin{align}
\label{def-f0}
f^0 := - \triangle u^0 + \Va \hspace{2pt}u^0 \in H^2(\D) \cap H^1_0(\D).
\end{align}
Hence, we can characterize $u^0 \in H^1_0(\D)$ as the solution to 
$$
a(u^0 , v ) = \langle f^0 , v \rangle \qquad \mbox{for all } v \in H^1_0(\D)
$$
and apply the general results of Section \ref{section-LOD}. In particular, if we define the (ideal) LOD space $\VLOD$ according to \eqref{def-OD-space} and let $u^0\LOD \in \VLOD$ denote the $a(\cdot,\cdot)$-orthogonal projection of $u^0$ into $\VLOD$, i.e.,
\begin{align}
\label{LOD-approx-initial-value}
a( u^0\LOD  , v) = a( u^0 , v) \qquad \mbox{for all } v \in \VLOD,
\end{align}
then the estimates \eqref{H1-conv} and \eqref{L2-conv} apply and we obtain that the initial discretization error in the $L^2$- and $H^1$-norm is
\begin{align*}
\| u^0 - u^0\LOD \| + H \| u^0 - u^0\LOD \|_{H^1(\D)}  &\le C H^4  \|  \triangle u^0 - \Va \hspace{2pt}u^0 \|_{H^2(\D)}.
\end{align*}
In the following we will use the notation $A \lesssim B$, to abbreviate $A \le C B$, where $C$ is a constant that can depend on $u^0$, $u$, $t$, $d$, $\mathcal{D}$, $\Va$, $\Vb$ and $\beta$, but not on the mesh size $H$ or the time step size $\tau$. With this, the estimate can be compactly written as
\begin{align}
\label{LOD-estimate-notation}
\| u^0 - u^0\LOD \| + H \| u^0 - u^0\LOD \|_{H^1(\D)}  &\lesssim H^4.
\end{align}
In the following we shall see that the mass and energy, as well as momentum and center of mass (for $V=0$) are even approximated with $6$th order accuracy with respect to the mesh size $H$. Before we can prove our first main result, we need one lemma.
\begin{lemma}\label{lemma-estimate-particle-interactions}
Assume \ref{A1}-\ref{A5} and \ref{splitting-potential}. Then 
$$
\left| \int_{\D}  |u^0|^4-|u^0\LOD|^4 \hspace{2pt} dx \right| \lesssim H^6.
$$
\end{lemma}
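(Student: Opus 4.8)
The plan is to reduce the quartic difference to an $L^2$-estimate on $u^0 - u^0\LOD$ that exploits the LOD-orthogonality \eqref{LOD-orthogonality}, which is the source of the extra powers of $H$. First I would write the difference of quartics algebraically. Setting $e := u^0 - u^0\LOD$, one has the pointwise identity
\begin{align*}
|u^0|^4 - |u^0\LOD|^4 = \bigl(|u^0|^2 - |u^0\LOD|^2\bigr)\bigl(|u^0|^2 + |u^0\LOD|^2\bigr),
\end{align*}
and $|u^0|^2 - |u^0\LOD|^2 = \Re\bigl( e \,\overline{(u^0 + u^0\LOD)} \bigr)$ (or the analogous real-bilinear expression), so the integrand is a product of $e$ with a bounded factor built from $u^0$ and $u^0\LOD$. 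Using that $u^0 \in H^4(\D) \hookrightarrow L^\infty(\D)$ and that $\|u^0\LOD\|_{H^1(\D)} \lesssim 1$ with the $H^4$-type regularity transferring to the projection (or at worst using $\|u^0\LOD\|_{L^\infty} \le \|u^0\|_{L^\infty} + \|e\|_{L^\infty} \lesssim 1$ via inverse estimates / the $H^1$-bound), the cubic factor multiplying $e$ lies in a bounded set of $H^2(\D)$. Hence
\begin{align*}
\left| \int_\D |u^0|^4 - |u^0\LOD|^4 \,dx \right| \lesssim \left| \langle g, e \rangle \right|, \qquad g \in H^2(\D) \text{ bounded,}
\end{align*}
where $g$ collects the cubic terms (one must be slightly careful that $g$ is genuinely in $H^2$, using $\triangle u^0 \in H^2$ from \ref{splitting-potential} and products of $H^2 \cap L^\infty$ functions being $H^2$ in dimension $d \le 3$).

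The key mechanism is then exactly the duality trick already used for \eqref{H1-conv}–\eqref{L2-conv}, but pushed one step further. Since $e = u^0 - u^0\LOD \in W = \ker(P_H)$, we may subtract $P_H g$ from $g$ and also insert $P_H e = 0$:
\begin{align*}
\langle g, e \rangle = \langle g - P_H g, e \rangle = \langle g - P_H g, e - P_H e \rangle \lesssim \|g - P_H g\| \, \|e - P_H e\| \lesssim H^2 \|g\|_{H^2(\D)} \, H \|e\|_{H^1(\D)}.
\end{align*}
Plugging in $\|e\|_{H^1(\D)} \lesssim H^3$ from \eqref{H1-conv} gives $|\langle g, e\rangle| \lesssim H^6$, which is the claim. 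A cleaner alternative that avoids invoking \eqref{H1-conv} directly: use $\langle g - P_H g, e \rangle$ and then apply the $L^2$-projection estimate to $g$ alone, $\|g - P_H g\| \lesssim H^2$, together with $\|e\| \lesssim H^4$ from \eqref{L2-conv} — but this only yields $H^6$ if one spends the $H^2$ on $g$ and $H^4$ on $e$, so the bookkeeping is the same; the symmetric split $H^3 \cdot H^3$ via $\|g - P_H g\| \lesssim H^2$ and $\|e - P_H e\| \lesssim H \|e\|_{H^1}$ is the natural one.

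The main obstacle I anticipate is not the orthogonality argument itself but controlling the cubic factor $g$ in $H^2(\D)$ independently of $H$. This requires (i) an $H^1$- or better bound on $u^0\LOD$ uniform in $H$, which follows from the energy estimate $\|u^0\LOD\|_{H^1(\D)} \lesssim \|f^0\|$ recalled at the end of Section~\ref{subsection-Ideal_LOD}; (ii) an $L^\infty$-bound on $u^0\LOD$, obtainable either from a super-approximation property of the LOD space or simply from $\|u^0\LOD\|_{L^\infty} \le \|u^0\|_{L^\infty} + \|u^0 - u^0\LOD\|_{L^\infty}$ combined with $\|u^0 - u^0\LOD\|_{L^\infty} \lesssim \|u^0 - u^0\LOD\|_{H^1}^{1/2}\|u^0 - u^0\LOD\|^{1/2} \to 0$ in $d=1$, or an analogous argument in $d=2,3$; and (iii) the fact that, in $d\le 3$, the pointwise product of two functions in $H^2 \cap L^\infty$ is again in $H^2$ — so that $g$, being a sum of such products (and including the $H^2$ term $\triangle u^0$), lies in $H^2(\D)$ with norm bounded independently of $H$. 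Once $g \in H^2(\D)$ uniformly, the conclusion is immediate. If a uniform $H^2$-bound on $u^0\LOD$ turns out to be delicate, a fallback is to replace $u^0\LOD$ in $g$ by $u^0$ at the cost of another $e$-factor and iterate, but I expect the direct argument above to suffice.
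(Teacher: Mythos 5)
Your mechanism is the right one and matches the paper's at its core: write the quartic difference as $\Re\langle g, e\rangle$ with $e=u^0-u^0\LOD\in W$, use $\langle g,e\rangle=\langle g-P_Hg,e\rangle$ to spend $H^2$ on the weight and $H^4$ on $\|e\|$ (via \eqref{Proj} and \eqref{H1-conv}). The difference is in how the cubic weight is handled, and it matters. You apply the orthogonality trick to the full weight $g=(|u^0|^2+|u^0\LOD|^2)(u^0+u^0\LOD)$, which forces you to bound $\|g\|_{H^2(\D)}$ uniformly in $H$ and hence requires $H$-uniform $H^2$- and $L^\infty$-bounds on $u^0\LOD$ itself. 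That is a genuinely nontrivial ingredient: it is exactly Lemma \ref{H2LOD}, which the paper proves only much later via a Lagrange-multiplier reformulation, elliptic regularity on the convex domain, and an inverse inequality for $\lambda_H$. Your suggested shortcut $\|u^0\LOD\|_{L^\infty}\le\|u^0\|_{L^\infty}+\|e\|_{H^1}^{1/2}\|e\|^{1/2}$ only closes the argument in $d=1$; in $d=2,3$ you are back to needing the $H^2$-bound (e.g.\ via Agmon), so as written the ``direct argument'' has a dependency you have not discharged. The paper's proof avoids all of this by the decompositions $u^0+u^0\LOD=2u^0-e$ and $|u^0\LOD|^2=|u^0|^2-(|u^0|^2-|u^0\LOD|^2)$: the orthogonality trick is applied only to the pure term $\langle|u^0|^2u^0,e\rangle$, whose weight $|u^0|^2u^0\in H^1_0(\D)\cap H^2(\D)$ is controlled by the data alone, while every remaining term is quadratic in $e$ and bounded by $\|e\|_{H^1(\D)}^2\lesssim H^6$ using only $H^1\hookrightarrow L^4$, the $H^1$-stability of the Ritz projection, and $\|u^0\|_{L^\infty}\lesssim 1$. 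Your ``fallback'' (replace $u^0\LOD$ by $u^0$ in $g$ at the cost of extra $e$-factors) is precisely this proof, so I would promote it to the main argument: it needs no regularity of the LOD space beyond what Section \ref{subsection-Ideal_LOD} already provides, whereas your primary route buys nothing in exchange for the heavier machinery.
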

\begin{proof}
We split the error in the following way 
\begin{eqnarray*}
\lefteqn{\int_{\D}  |u^0|^4-|u^0\LOD|^4 \hspace{2pt} dx = \Re \langle (|u^0|^2+|u^0\LOD|^2)(u^0+u^0\LOD),u^0-u^0\LOD\rangle  } \\
&=&\Re \langle (|u^0|^2+|u^0\LOD|^2)(2 u^0+u^0\LOD-u^0),u^0-u^0\LOD\rangle \\
&=& \underbrace{\Re \langle (|u^0|^2+|u^0\LOD|^2)2u^0,u^0-u^0\LOD\rangle}_{\mbox{I}} -\underbrace{ \langle |u^0|^2+|u^0\LOD|^2,|u^0-u^0\LOD|^2\rangle}_{\mbox{II}}.
\end{eqnarray*}
We proceed to bound term II, where we have with the Cauchy-Schwarz inequality
\begin{align*}
\left| \langle |u^0|^2+|u^0\LOD|^2,|u^0-u^0\LOD|^2\rangle \right| \leq  \left( \| u^0 \|_{L^4(\D)}^2+\| u^0\LOD \|_{L^4(\D)}^2 \right) \| u^0- u^0\LOD \|_{L^4(\D)}^2.
\end{align*}
With the Sobolev embedding $H^1(\mathcal{D}) \hookrightarrow L^4(\mathcal{D})$ we conclude that
\begin{align*}
\left| \langle |u^0|^2+|u^0\LOD|^2,|u^0-u^0\LOD|^2\rangle \right| \leq C \left( \| u^0 \|_{L^4(\D)}^2+\| u^0\LOD \|_{H^1(\D)}^2 \right) \| u^0- u^0\LOD \|_{H^1(\D)}^2,
\end{align*}
where $\| u^0\LOD \|_{H^1(\D)} \lesssim \| u^0 \|_{H^1(\D)}$ by stability of the Ritz-projection and where we have $\| u^0- u^0\LOD \|_{H^1(\D)} \lesssim H^3$ by \eqref{LOD-estimate-notation}. In conclusion we have $|\mbox{II}| \lesssim H^6$.\\[0.2em]
Next, we consider $\mbox{I}$, where we split 
\begin{eqnarray*}
\lefteqn{\Re \langle (|u^0|^2+|u^0\LOD|^2)u^0,u^0-u^0\LOD\rangle} \\
&=& \underbrace{2 \Re \langle |u^0|^2 u^0,u^0-u^0\LOD\rangle}_{=:\mbox{II}_1}
+ \underbrace{\Re \langle (|u^0\LOD|^2 - |u^0|^2 )u^0,u^0-u^0\LOD\rangle}_{=:\mbox{II}_2}.
\end{eqnarray*}
For $\mbox{II}_1$ we observe with $|u^0|^2 u^0 \in H^1_0(\D) \cap H^2(\D)$ and the properties of the $L^2$-projection $P_H$ that
\begin{align*}
|\mbox{II}_1| &= 2 \hspace{2pt} |\Re \langle |u^0|^2 u^0 - P_H(|u^0|^2 u^0),u^0-u^0\LOD\rangle| \\
&\le 2 \hspace{3pt} \|  |u^0|^2 u^0 - P_H(|u^0|^2 u^0) \| \hspace{4pt} \| u^0-u^0\LOD \| \\
&\le C \hspace{3pt} H^2 \|  |u^0|^2 u^0 \|_{H^2(\D)}  \hspace{4pt} \| u^0-u^0\LOD \|
\lesssim H^6.
\end{align*}
For $\mbox{II}_2$ we have similarly as for $\mbox{I}$
\begin{align*}
|\mbox{II}_2| &=  |\Re \langle (|u^0\LOD|^2 - |u^0|^2 )u^0,u^0-u^0\LOD\rangle | \\
&\le \| u^0 \|_{L^{\infty}(\D)} \left( \| u^0\LOD \|+ \| u^0 \| \right) \hspace{1pt} \| u^0\LOD - u^0 \|_{L^4(\D)}^2  \\
&\lesssim \| u^0\LOD - u^0 \|_{H^1(\D)}^2 \lesssim H^6.
\end{align*}
Collecting the estimates for $\mbox{I}$, $\mbox{II}_1$ and $\mbox{II}_2$, the result follows.
\end{proof}
With this, we are ready to prove the super-approximation properties for the time invariants in the LOD-space.
\begin{theorem}(6th order convergence of time invariants)
\label{theorem-superapproximation}
Assume \ref{A1}-\ref{A5} and \ref{splitting-potential} and let the LOD-approximation $u^0\LOD \in \VLOD$ of the initial value $u^0$ be given by \eqref{LOD-approx-initial-value}. Then the error in mass can be bounded as
$$
\left| M[u^0] - M[u^0\LOD]  \hspace{2pt}\right| \lesssim H^6
$$
and the initial energy error as
$$
\left| E[u^0] - E[u^0\LOD]  \hspace{2pt}\right| \lesssim H^6.
$$
In the absence of a potential term, i.e., $V=0$, we recall the momentum as another time invariant. We can approximate it with the same order of accuracy as mass and energy, that is
$$
\left| P[u^0] - P[u^0\LOD]  \hspace{2pt}\right| \lesssim H^6.
$$
The same holds for the center of mass in this case, where we have
$$
\left| X_c[u^0] - X_c[u^0\LOD]  \hspace{2pt}\right| \lesssim H^6.
$$
\end{theorem}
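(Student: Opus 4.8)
The plan is to collapse each of the four functional errors, by elementary algebra, to expressions of the form $\langle e , g \rangle$ with $e := u^0 - u^0\LOD$ and $g$ a sufficiently regular function, plus remainders that are manifestly of higher order. The three structural facts about $e$ that I would use are: (i) $e \in W = \ker(P_H)$, so $\langle e , v_H\rangle = 0$ for all $v_H \in V_H$ and hence $\langle e , g\rangle = \langle e , g - P_H g\rangle$ for every $g \in L^2(\D)$; (ii) the $a(\cdot,\cdot)$-orthogonality of $\VLOD$ and $W$, i.e. $a(u^0\LOD , e) = 0$, together with the LOD-orthogonality \eqref{LOD-orthogonality}, $a(e,w) = \langle f^0, w\rangle$ for $w \in W$; and (iii) the bounds $\| e \| \lesssim H^4$ and $\| e\|_{H^1(\D)} \lesssim H^3$ from \eqref{LOD-estimate-notation}. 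Combining (i) and (iii) with the standard $L^2$-projection error estimate yields the auxiliary bound
$$
| \langle e , g \rangle | = | \langle e , g - P_H g\rangle | \le \| e \| \, \| g - P_H g\| \lesssim H^4 \cdot H^2 \, \| g\|_{H^2(\D)} \lesssim H^6 \qquad \text{for all } g \in H^2(\D)\cap H^1_0(\D),
$$
which is the engine of the whole argument; the task in each individual case is merely to expose such a factor.

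For the mass I would substitute $u^0\LOD = u^0 - e$ to obtain $M[u^0] - M[u^0\LOD] = 2\,\Re\langle u^0 , e\rangle - \| e\|^2$; here the first term is $\lesssim H^6$ by the auxiliary bound with $g = u^0 \in H^2(\D)\cap H^1_0(\D)$, and $\|e\|^2 \lesssim H^8$. For the energy I would split $E[u^0]-E[u^0\LOD]$ into the part $a(u^0,u^0) - a(u^0\LOD,u^0\LOD)$, the part $\langle\Vb u^0, u^0\rangle - \langle\Vb u^0\LOD, u^0\LOD\rangle$, and the quartic part $\tfrac{\beta}{2}\int_\D(|u^0|^4 - |u^0\LOD|^4)$. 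Using $u^0 = u^0\LOD + e$ and $a(u^0\LOD,e)=0$, the first part collapses to $a(e,e)$, which is $\lesssim \|e\|_{H^1(\D)}^2 \lesssim H^6$ by continuity of $a(\cdot,\cdot)$ (equivalently $a(e,e) = \langle f^0 - P_H f^0, e\rangle \lesssim H^6$ via \eqref{LOD-orthogonality} and $f^0 \in H^2(\D)\cap H^1_0(\D)$ from \eqref{def-f0}). For the second part, $\langle\Vb u^0,u^0\rangle - \langle\Vb u^0\LOD,u^0\LOD\rangle = 2\,\Re\langle\Vb u^0, e\rangle - \langle\Vb e, e\rangle$, and since $\Vb \in H^2(\D) \hookrightarrow L^\infty(\D)$ and $u^0 \in H^2(\D)\cap H^1_0(\D)$ (with $d\le 3$) the product $\Vb u^0$ again lies in $H^2(\D)\cap H^1_0(\D)$, so the first term is $\lesssim H^6$ by the auxiliary bound and the second is $\lesssim \|\Vb\|_{L^\infty(\D)}\|e\|^2 \lesssim H^8$. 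The quartic part is $\lesssim H^6$ directly by Lemma \ref{lemma-estimate-particle-interactions}. Summation gives the energy estimate.

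The center of mass is handled exactly as the mass, and notably without using $V=0$: componentwise, $X_c[u^0]_j - X_c[u^0\LOD]_j = 2\,\Re\langle x_j u^0 , e\rangle - \int_\D x_j|e|^2$, and since $\D$ is bounded, $x_j$ is smooth with bounded derivatives, hence $x_j u^0 \in H^2(\D)\cap H^1_0(\D)$ and the first term is $\lesssim H^6$ while the second is $\lesssim \|e\|^2 \lesssim H^8$. For the momentum (where now $V=0$, so $\Va = \Vb = 0$ and $a(v,w) = \int_\D\nabla v\cdot\overline{\nabla w}$) I would expand componentwise, $\overline{u^0}\,\partial_j u^0 - \overline{u^0\LOD}\,\partial_j u^0\LOD = \overline{u^0}\,\partial_j e + \overline e\,\partial_j u^0 - \overline e\,\partial_j e$, integrate the first summand by parts (the boundary term vanishes since $u^0, e \in H^1_0(\D)$), and observe that the first two contributions then combine, up to a remainder bounded by $\| e\|\,\| e\|_{H^1(\D)} \lesssim H^7$, into a fixed multiple of $\Im\langle e , \partial_j u^0\rangle$. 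Invoking the auxiliary bound with $g = \partial_j u^0$ then closes the argument.

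The one delicate point, and what I expect to be the main obstacle, is precisely this last step: the auxiliary bound with $g = \partial_j u^0$ requires $\partial_j u^0 \in H^2(\D)\cap H^1_0(\D)$, i.e. not only $u^0 \in H^3(\D)$ (which follows from \ref{A5}) but also $\nabla u^0 = 0$ on $\partial\D$, which assumption \ref{A5} alone does not provide. This is exactly the boundary behaviour that is already needed for $P$ to be a conserved quantity of the continuous equation when $V=0$ (cf. the discussion following \eqref{Center}, which requires $u$ to decay sufficiently rapidly near $\partial\D$), so for the momentum — and hence for the center-of-mass statement in that regime — I would work under the additional, physically natural hypothesis that $u^0$ decays rapidly near $\partial\D$, concretely $\nabla u^0 \in H^1_0(\D)$, e.g. $u^0$ compactly supported in $\D$. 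Under this hypothesis all four bounds go through; everything else is the repeated application of the single template ``subtract $P_H$, gain a factor $H^2$, pair against $\| e\| \lesssim H^4$''.
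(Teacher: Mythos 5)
Your proof is correct and follows essentially the same route as the paper's: the identical decompositions of each invariant (mass as $2\Re\langle u^0,e\rangle-\|e\|^2$, energy into the $a$-part, the $\Vb$-part and the quartic part handled by Lemma~\ref{lemma-estimate-particle-interactions}, momentum via the same expansion plus integration by parts), with every leading term reduced to $\langle e, g-P_H g\rangle\lesssim H^4\cdot H^2\|g\|_{H^2(\D)}$ exactly as in the paper. The boundary subtlety you flag for the momentum is genuine but is not a defect of your argument relative to the paper's: the paper's Step~3 likewise applies the second-order $L^2$-projection estimate to $\partial_{x_i}u^0$ without verifying the zero-trace condition that $V_H\subset H^1_0(\D)$ requires, so the extra decay hypothesis you propose (e.g.\ $\nabla u^0\in H^1_0(\D)$, justified physically by confinement) is implicitly needed there as well.
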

\begin{proof}
We start with the convergence for the mass, then we investigate the energy and finally the momentum and the center of mass.\\[0.4em]
{\it Step 1: 6th order convergence of mass.}\\
With the definition of $M$ we have
	\begin{eqnarray*}
		M[u^0] - M[u^0\LOD] &=& \int_{\D} |u^0|^2-|u^0\LOD|^2 dx \\
		&=&\Re\langle u^0+u^0\LOD,u^0-u^0\LOD\rangle \\
		&=&  2 \hspace{2pt} \Re \langle u^0,\underbrace{u^0-u^0\LOD }_{\in W}\rangle - \langle u^0-u^0\LOD,u^0-u^0\LOD \rangle \\
	       &=&  2 \hspace{2pt}  \Re \langle u^0 - P_H(u^0),u^0-u^0\LOD\rangle -\|u^0-\uLOD^0\|^2,
	\end{eqnarray*}
where we recall $P_H : H^1_0(\D) \rightarrow V_H$ as the $L^2$-projection onto the standard FE space, which implies $L^2$-orthogonality of $P_H(u^0)$ and $(u^0-u^0\LOD)$.
From this we gather:
\begin{eqnarray*}
\left| M[u^0] - M[u^0\LOD]  \hspace{2pt}\right| \leq  C (H^6+H^8),
\end{eqnarray*}
for some constant $C$ that depends on the $H^4$-norm of $u^0$ and the $H^2$-norm of $V_1$. This proves the superconvergence for the mass.\\[0.4em]
{\it Step 2: 6th order convergence of energy.}\\
The energy error can be decomposed into
\begin{eqnarray*}
\lefteqn{E[u^0] - E[u^0\LOD]}\\  
&=& \underbrace{a(u^0,u^0) - a(u^0\LOD,u^0\LOD)}_{=: \mbox{I}} + \underbrace{\int_{\D} V_2 \left(  |u^0|^2-|u^0\LOD|^2 \right) dx}_{=: \mbox{II}}
+ \frac{\beta}{2} \underbrace{\int_{\D}  |u^0|^4-|u^0\LOD|^4 \hspace{2pt} dx}_{=: \mbox{III}}.
\end{eqnarray*}
For the first term we have with the definition of $f^0$ in \eqref{def-f0} that 
\begin{align*}
a(u^0,u^0)-a(\uLOD^0,\uLOD^0) & = \langle f^0,u^0\rangle - \langle f^0, \uLOD^0 \rangle  = \langle f^0, \underbrace{u^0-\uLOD^0}_{\in W}\rangle  \\
& = \langle f^0-P_H(f^0), u^0-\uLOD^0 \rangle.
\end{align*}
Since $f^0  \in H^2(\D) \cap H^1_0(\D)$ we conclude with the approximation properties of the $L^2$-projection $P_H$ and together with \eqref{LOD-estimate-notation} that 
$$
|\mbox{I}| \le \| f^0-P_H(f^0) \| \hspace{3pt} \| u^0-\uLOD^0 \| \le C H^2 \| f^0 \|_{H^2(\D)} H^4 \| f^0 \|_{H^2(\D)} \lesssim H^6.
$$
For the second term we observe analogously to the estimate for the mass that
	\begin{eqnarray*}
		 \int_{\D} V_2 (|u^0|^2-|u^0\LOD|^2 ) dx 
	       &=&  2 \hspace{2pt}  \Re \langle V_2 \hspace{2pt}u^0 - P_H(V_2 \hspace{2pt}u^0),u^0-u^0\LOD\rangle -\|\sqrt{V_2} \hspace{2pt} (u^0-\uLOD^0)\|^2.
	\end{eqnarray*}
Since $V_2 \hspace{2pt}u^0\in H^1_0(\D) \cap H^2(\D)$ we have as before
$$
|\mbox{II}| = \left| \int_{\D} V_2 (|u^0|^2-|u^0\LOD|^2 ) \hspace{2pt}dx \right| \lesssim H^6.
$$
For the third term, we can directly apply Lemma \ref{lemma-estimate-particle-interactions} to see $|\mbox{III}|\lesssim H^6$. Combining the estimates for $|\mbox{I}|$, $|\mbox{II}|$ and $|\mbox{III}|$ yields the desired estimate for the energy.\\[0.4em]
{\it Step 3: 6th order convergence of momentum.}\\
We recall the (vector-valued) momentum with $P[v] = 2\int_{\D} \Im\big(\overline{v} \nabla v \big) \hspace{2pt}dx$. Hence it is sufficient to study $\Im\langle u^0,\partial_{x_i} u^0\rangle - \Im\langle u\LOD^0,\partial_{x_i}u\LOD^0 \rangle$ for $1\le i \le d$. We obtain (using Gauss's theorem)
\begin{eqnarray*}
\lefteqn{\Im\langle u^0,\partial_{x_i} u^0\rangle - \Im\langle u\LOD^0,\partial_{x_i} u^0\LOD\rangle = \Im\langle u^0-u\LOD^0,\partial_{x_i}u^0 \rangle + \Im\langle  u\LOD^0,\partial_{x_i}(u^0-u\LOD^0)\rangle} \\
		&=& 2 \hspace{2pt} \Im \langle u^0-u\LOD^0,\partial_{x_i} u^0 \rangle - \Im \langle u^0-u\LOD^0,\partial_{x_i}(u^0-u\LOD^0)\rangle \\
		&=& 2 \hspace{2pt} \Im \langle u^0-u\LOD^0, \partial_{x_i} u^0-P_H(\partial_{x_i} u^0)\rangle - \Im \langle u^0-u\LOD^0,\partial_{x_i}(u^0-u\LOD^0)\rangle.
\end{eqnarray*}
We conclude that
		\begin{eqnarray*}
		\lefteqn{ \left| P[u^0]-P[u\LOD^0] \right|} \\
		&\leq& 4\|u^0-u\LOD^0\|\|\partial_x u^0-P_H(\partial_xu^0)\| +2 \|u^0-u\LOD^0\|\|\nabla (u^0-u\LOD^0)\| \lesssim H^6+H^7.
		\end{eqnarray*}
{\it Step 4: 6th order convergence of center of mass.}\\
Since $X_c[v] = \int_{\D} x\hspace{2pt}|v(x)|^2 \hspace{2pt}dx$, the proof is fully analogous to the estimate of the term $\mbox{II}$ in {\it Step 2}.
\end{proof}

\section{Standard Crank--Nicolson discretization in the LOD space}
\label{section-std-CN-LOD}
We now turn to the fully discrete problem where, as pointed out, conservation of a time invariant is subject to a suitable time discretization. For that we apply a Crank--Nicolson time integrator \cite{Akrivis1991,BaC12,BaC13,NonlinearCN,Sanz-SernaNLCN} that is known to conserve both the discrete mass and the discrete energy exactly for general classes of nonlinear Schr\"odinger equations.

We start with discretizing the considered time interval $[0,T]$ with $N$ time steps. Consequently the time step size is given by $\tau:= T/N$ and we shall denote the discrete time levels by $t_n := n\tau$, where $n = 0,\dots,N$. With this, the classical energy-conservative Crank--Nicolson method applied to the LOD space reads as follows.

Given $u^{\CN 0}\LOD:=u\LOD^0\in \VLOD$ according to \eqref{LOD-approx-initial-value}, find $u^{\CN n+1}\LOD \in \VLOD, \ n=0,\dots,N-1$, such that
	\begin{equation}\label{Crank}
		\ci \big\langle D_\tau u^{\CN n}\LOD,v\big\rangle = 
		\big\langle\nabla u^{\CN n+1/2}\LOD,\nabla v \big\rangle  +\langle Vu^{\CN n+1/2}\LOD,v\rangle + \beta \big\langle \frac{|u^{\CN n+1}\LOD|^2+|u^{\CN n}\LOD|^2}{2}u^{\CN n+1/2}\LOD,v\big\rangle
	\end{equation}
for all $v\in \VLOD$. Here we use the short hand notation
\begin{align*}
D_\tau u^{\CN n}\LOD := \frac{ u^{\CN n+1}\LOD - u^{\CN n}\LOD }{ \tau} \qquad \mbox{and}
\qquad u^{\CN n+1/2}\LOD := \frac{ u^{\CN n+1}\LOD + u^{\CN n}\LOD }{ 2}.
\end{align*}
It is easily seen, by testing with $v=u^{\CN n+1/2}\LOD$ in \eqref{Crank} and taking the imaginary part that the discrete mass is conserved exactly, i.e.,
$$
M[u^{\CN n}\LOD] = M[u^{0}\LOD] \qquad \mbox{for all } n \ge 0.
$$
Together with the super-approximation properties in Theorem \ref{theorem-superapproximation} we hence conclude that $M[u^{\CN n}\LOD]$ will stay close to the exact mass for all times, i.e.,
$$
\left| \hspace{2pt} M[u^{\CN n}\LOD]  - M[u^0] \hspace{2pt} \right| = \mbox{const}\hspace{1pt}\lesssim H^6.
$$
for all $n\ge 0$. Similarly, by testing with $v=D_\tau u^{\CN n}\LOD $ in \eqref{Crank} and taking the real part we see that also the discrete energy is conserved exactly and we have
$$
E[u^{\CN n}\LOD] = E[u^{0}\LOD] \qquad \mbox{for all } n \ge 0.
$$
Theorem \ref{theorem-superapproximation} implies again
$$
\left| \hspace{2pt} E[u^{\CN n}\LOD]  - E[u^0] \hspace{2pt} \right| = \mbox{const}\hspace{1pt}\lesssim H^6.
$$
Due to the nonlinearity in \eqref{Crank} it is not obvious that the scheme is well-posed and always admits a solution. However, we have the following existence result that we shall prove in the appendix for the sake of completeness. 
\begin{lemma}[existence of solutions to the classical Crank--Nicolson method]
\label{lemma-existence-solutions-classiccal-CN}
Assume \ref{A1}-\ref{A3}, then for any $n\ge 1$ there exists at least one solution $u^{\CN n}\LOD \in \VLOD$ to the Crank--Nicolson scheme \eqref{Crank}. 
\end{lemma}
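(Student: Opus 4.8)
The plan is to argue by induction on $n$ and to reduce a single Crank--Nicolson update to a zero-finding problem on the finite-dimensional space $\VLOD$, which is then solved by Brouwer's fixed-point theorem. Fix $n\ge 0$ and suppose $u^{\CN n}\LOD\in\VLOD$ is already known (for $n=0$ this is the given datum $u^0\LOD$); write $\phi:=u^{\CN n}\LOD$ for brevity. Taking the half-step value $w:=u^{\CN n+1/2}\LOD\in\VLOD$ as the new unknown, one has $u^{\CN n+1}\LOD=2w-\phi$ and $D_\tau u^{\CN n}\LOD=\tfrac{2}{\tau}(w-\phi)$, so \eqref{Crank} is equivalent to: find $w\in\VLOD$ such that, for all $v\in\VLOD$,
\begin{align*}
\ci\,\tfrac{2}{\tau}\langle w-\phi,v\rangle={}&\langle\nabla w,\nabla v\rangle+\langle Vw,v\rangle\\
&+\beta\Big\langle\tfrac{|2w-\phi|^2+|\phi|^2}{2}\,w,v\Big\rangle.
\end{align*}

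Next I recast this as the problem of finding a zero of a continuous map. Equipping $\VLOD$ with the $L^2$-inner product, let $G\colon\VLOD\to\VLOD$ be defined by letting $G(w)\in\VLOD$ be the unique element satisfying, for all $v\in\VLOD$,
\begin{align*}
\langle G(w),v\rangle:={}&\langle\nabla w,\nabla v\rangle+\langle Vw,v\rangle+\beta\Big\langle\tfrac{|2w-\phi|^2+|\phi|^2}{2}\,w,v\Big\rangle\\
&-\ci\,\tfrac{2}{\tau}\langle w-\phi,v\rangle.
\end{align*}
Since $V\in L^{\infty}(\D)$ by \ref{A2} and every term on the right is a polynomial expression in $w$, the map $G$ is continuous on $\VLOD$; and any zero $w$ of $G$ solves the half-step equation, so that $u^{\CN n+1}\LOD:=2w-\phi\in\VLOD$ solves \eqref{Crank}.

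To produce a zero of $G$ I test with $v=w$ and take the real part: the Laplacian term equals $\|\nabla w\|^2$, the potential term $\int_{\D}V|w|^2\,dx$ is nonnegative by \ref{A2}, the cubic term $\beta\int_{\D}\tfrac{|2w-\phi|^2+|\phi|^2}{2}|w|^2\,dx$ is nonnegative by \ref{A3}, and the time term contributes $-\tfrac{2}{\tau}\Im\langle\phi,w\rangle$. Using the Poincaré--Friedrichs inequality on the bounded domain $\D$ (constant $C_{\D}$) together with Cauchy--Schwarz,
$$
\Re\langle G(w),w\rangle\;\ge\;\|\nabla w\|^2-\tfrac{2}{\tau}\|\phi\|\,\|w\|\;\ge\;C_{\D}^{-1}\|w\|^2-\tfrac{2}{\tau}\|\phi\|\,\|w\|,
$$
which is strictly positive for every $w$ on the sphere $\|w\|=R$ as soon as $R>\tfrac{2C_{\D}}{\tau}\|\phi\|$. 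Viewing $\VLOD$ as a finite-dimensional real Hilbert space with inner product $\Re\langle\cdot,\cdot\rangle$, the standard corollary of Brouwer's fixed-point theorem then yields some $w\in\VLOD$ with $\|w\|\le R$ and $G(w)=0$, which closes the inductive step (if $\phi=0$ the choice $w=0$ works directly). Starting from $u^{\CN 0}\LOD=u^0\LOD$, this proves the existence of $u^{\CN n}\LOD$ for every $n\ge 1$.

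I do not anticipate a real obstacle; this is the classical Brouwer-type existence argument for nonlinear Crank--Nicolson schemes, valid for all $\tau>0$ without any smallness condition. The only points needing some care are the sign bookkeeping after taking the real part -- the crucial observation being that $-\triangle$, the nonnegative potential \ref{A2} and the defocusing nonlinearity \ref{A3} all contribute with the favorable sign, so that only the term involving $\phi$ has to be absorbed via Cauchy--Schwarz and Poincaré -- and the algebraic passage to and from the half-step variable $w$. One may additionally note that testing with $v=w$ and taking the \emph{imaginary} part gives the unconditional a priori bound $\|w\|\le\|\phi\|$ (discrete mass conservation) for every solution, although it is the real-part estimate that drives the fixed-point argument. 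Uniqueness is not claimed, consistent with the statement of the lemma.
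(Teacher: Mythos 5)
Your proof is correct, and it rests on the same engine as the paper's: reduce the update to a zero-finding problem for a continuous map on the finite-dimensional space $\VLOD$ and invoke the Brouwer/Browder-type lemma (``$\Re\langle G(w),w\rangle>0$ on a large sphere implies a zero inside''), which is exactly \cite[Lemma 4]{Browder1965} as used in the appendix. The genuine difference is your choice of unknown. The paper works directly with $u^{\CN n+1}\LOD$ (its coefficient vector $\boldsymbol{\alpha}$), so the nonlinear contribution to $\Re(g(\boldsymbol{\alpha})\cdot\boldsymbol{\alpha})$ has the form $\tfrac{\beta}{4}\Re\langle(|z_\alpha|^2+|u^{\CN n}\LOD|^2)(z_\alpha+u^{\CN n}\LOD),z_\alpha\rangle$, which is not obviously signed and must be bounded below via a Young-inequality manipulation yielding $\tfrac{\beta}{8}\bigl(\|z_\alpha\|_{L^4(\D)}^4-\|u^{\CN n}\LOD\|_{L^4(\D)}^4\bigr)$. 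By instead solving for the half-step value $w=u^{\CN n+1/2}\LOD$, you make the cubic term $\beta\int_{\D}\tfrac{|2w-\phi|^2+|\phi|^2}{2}|w|^2\,dx$ manifestly nonnegative under \ref{A3}, so the entire sign analysis collapses to Poincar\'e plus Cauchy--Schwarz on the single term $-\tfrac{2}{\tau}\Im\langle\phi,w\rangle$. This is a cleaner and slightly more robust argument (it would survive any nonlinearity of the form $g(\cdot)\ge 0$ multiplying $w$); what the paper's version buys in exchange is that the same template carries over almost verbatim to the modified scheme \eqref{FullyDiscrete}, where the projected density $P\LOD(|u^{n+1}\LOD|^2+|u^n\LOD|^2)$ need not be nonnegative and a truncation is required anyway, so the half-step trick would not help there. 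Your side remarks (the a priori bound $\|w\|\le\|\phi\|$ from the imaginary part, no smallness condition on $\tau$, no uniqueness claim) are all consistent with the paper.
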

Even though the Crank--Nicolson method \eqref{Crank} is well-posed, conserves the mass and energy and exhibits super-approximation properties it has a severe disadvantage from the computational point of view that is that the repeated assembly of the nonlinear term 
$$
\big\langle \frac{|u^{\CN n+1}\LOD|^2+|u^{\CN n}\LOD|^2}{2}u^{\CN n+1/2}\LOD,v\big\rangle$$ 
in each iteration is extremely costly in the LOD space. We will elaborate more on this drawback in the next section, where we will also propose a modified Crank--Nicolson discretization that overcomes this issue and which can be implemented in an efficient way. 

\section{A modified Crank--Nicolson discretization in the LOD space} 
\label{section-mod-CN-LOD}

In this section we present a modified energy conservative Crank--Nicolson scheme tailored for the LOD-space in terms of computational efficiency. To facilitate reading we again define $u^{n+1/2}\LOD := (u^{n+1}\LOD+u^n\LOD)/2 $ and $D_\tau u^{n}\LOD := (u^{n+1}\LOD - u^{n}\LOD) / \tau$. Furthermore, we let $P\LOD : H^1_0(\D) \rightarrow \VLOD$ denote the $L^2$-projection onto the LOD-space, i.e., for $v\in H^1_0(\D)$ we have that $P\LOD(v) \in \VLOD$ is given by
$$
\langle P\LOD(v) , v\LOD \rangle = \langle v , v\LOD \rangle \qquad \mbox{for all } v\LOD \in \VLOD.
$$
With this, we propose the following variation of the CN-method which allows for a significant speed-up in the LOD-setting while respecting both energy and mass conservation and without affecting convergence rates. The modified method reads:

Given $u\LOD^0\in \VLOD$ according to \eqref{LOD-approx-initial-value}, find $u^{n+1}\LOD \in \VLOD, \ n=0,\dots,N-1$, such that
		\begin{align}\label{FullyDiscrete}
	\ci \big\langle D_\tau u^n\LOD,v\big\rangle = 
	\big\langle\nabla \uLOD^{n+1/2},\nabla v \big\rangle + \big\langle V\uLOD^{n+1/2},v\big\rangle + \beta\langle\frac{P\LOD\big(|u^{n+1}\LOD|^2+ |u^n\LOD|^2\big)}{2}  u^{n+1/2}\LOD,v\rangle
	\end{align}
for all $v\in V\LOD$. Before we start presenting our analytical main results concerning well-posedness of the method, conservation properties and convergence rates, we shall briefly discuss the significant computational difference between \eqref{FullyDiscrete} and the classical formulation \eqref{Crank}.

For that, let $\{\varphi_i\}_{i=1}^{N_H}$ denote the computed basis of $V\LOD$. We compare the algebraic characterizations of the nonlinear terms in \eqref{Crank} and \eqref{FullyDiscrete}, respectively. The speed-up in CPU time is motivated by the large difference in computational work required to assemble the vectors:
	\begin{align*}
	&\mbox{i)} \quad \langle |u\LOD|^2u\LOD,\varphi_l\rangle = \langle \sum_{i,j,k=1}^{N_H} \hspace{2pt} \mathbf{U}_i\overline{\mathbf{U}}_j \mathbf{U}_k \hspace{2pt} \varphi_i \varphi_j \varphi_k ,\varphi_l\rangle \\
	&\mbox{ii)} \quad \langle P\LOD(|u\LOD|^2)u\LOD,\varphi_l\rangle   = \langle \sum_{i,j=1}^{N_H} \boldsymbol{\varrho}_i \mathbf{U}_j \hspace{2pt} \varphi_i \varphi_j,\varphi_l\rangle
	\end{align*}
	where  $\mathbf{U} \in \mathbb{C}^{N_H}$ denotes the vector of nodal values representing the function $\uLOD \in V_{\text{\tiny LOD}}$, i.e., $\uLOD = \sum_{i=1}^{N_H} \mathbf{U}_i \varphi_i$. Likewise, $\boldsymbol{\varrho}\in \mathbb{C}^{N_H}$ represents those of $P\LOD(|u\LOD|^2)$, i.e., $P\LOD(|u\LOD|^2) = \sum_{i=1}^{N_H} \boldsymbol{\varrho}_i \varphi_i$. 
	As an example, consider the 1D case and assume that the support of a basis function $\varphi_i$ is $2(\ell+1)$ coarse simplices, where $\ell \in \mathbb{N}$ is the truncation parameter introduced in Section \ref{subsection-localization-LOD}. Consequently, vector expression i) requires of $\mathcal{O}(\ell^4)$ operations, whereas vector expression ii) requires $\mathcal{O}(\ell^3)$ operations. Moreover, computing the specific projection $P\LOD(|u\LOD|^2)$ can be done efficiently with precomputations that can be reused to compute vector expression ii). Details on the latter aspect are given in the section on implementation, i.e., Section \ref{section-implementation}, where we elaborate more on the efficient realization of the assembly process. A comparison between the classical CN \eqref{Crank} and the modified CN \eqref{FullyDiscrete} in terms of CPU times is later presented in the numerical experiments, where we measured speed-ups by a factor of up to 1200 (cf. Table \ref{CPUs}).\\

The following main results now summarizes the properties of the modified Crank--Nicolson scheme. As we will see, it is well-posed, conserves the mass and a modified energy and we have superconvergence for the $L^{\infty}(L^2)$-error.

\begin{theorem}\label{main-theorem-modified-CN}
Assume \ref{A1}-\ref{splitting-potential} and let $\tau \le \tau_0$ for a sufficiently small parameter $\tau_0>0$ that depends on $u$ and the data functions. Then for every $n\ge 1$ there exists a solution $u^{n}\LOD \in \VLOD$ to the Crank--Nicolson method \eqref{FullyDiscrete} with the following properties: The sequence of solutions is mass-conservative, i.e., for all $n\ge 0$
$$
M[u^{n}\LOD] = M[u^{0}\LOD] \qquad \mbox{where } \quad \left| \hspace{2pt} M[u^{n}\LOD]  - M[u^0] \hspace{2pt} \right| \lesssim H^6.
$$
Furthermore, we have conservation of a modified energy, i.e., for all $n\ge0$
\begin{align*}	
E\LOD[u^{n}\LOD] = E\LOD[u^{0}\LOD] \qquad \mbox{where } \quad 
E\LOD[v]:=\int_{\D} |\nabla v|^2+V|v|^2+ \frac{\beta}{2}|P\LOD(|v|^2)|^2\hspace{2pt}dx. 
\end{align*}
The exact energy is approximated with a $6$th order accuracy, i.e.,
\begin{align*}	
\left| \hspace{2pt}  E[u^{n}\LOD] - E[u^{0}] \hspace{2pt} \right| \lesssim H^6.
\end{align*}
Finally, we also have the following superconvergence result for the $L^2$-error between the exact solution $u$ at time $t_n$ and the CN-LOD approximation $u^{n}\LOD$:
$$
\max_{0 \le n \le N} \| u(\cdot , t_n) -  u^{n}\LOD \| \lesssim \tau^2 + H^4.
$$
\end{theorem}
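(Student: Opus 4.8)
\emph{Plan.} The final $L^{\infty}(L^2)$-estimate is the core of the theorem; the other three claims I would dispatch first, since they are comparatively cheap. Existence of $u^{n}\LOD$ for each fixed $n$ follows by the same topological fixed-point argument as Lemma~\ref{lemma-existence-solutions-classiccal-CN}, now on the finite-dimensional space $\VLOD$ — inserting the bounded, self-adjoint, idempotent $L^2$-projection $P\LOD$ into the cubic term does not disturb the degree argument once $\tau$ is small. Mass conservation comes from testing \eqref{FullyDiscrete} with $v=u^{n+1/2}\LOD$ and taking imaginary parts, using that $P\LOD$ maps real-valued functions to real-valued functions so the whole right-hand side is real; the $H^6$-proximity to $M[u^0]$ is then Theorem~\ref{theorem-superapproximation}. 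For the modified energy I would test with $v=D_\tau u^{n}\LOD$, take real parts, and use self-adjointness and idempotency of $P\LOD$ to recognise the cubic contribution as $\tfrac{1}{4\tau}\big(\|P\LOD(|u^{n+1}\LOD|^2)\|^2-\|P\LOD(|u^{n}\LOD|^2)\|^2\big)$, which telescopes into $E\LOD$; the $H^6$-accuracy of $E[u^{n}\LOD]$ then follows from the conservation of $E\LOD$, Theorem~\ref{theorem-superapproximation}, the Pythagorean identity $E[v]-E\LOD[v]=-\tfrac{\beta}{2}\|(I-P\LOD)(|v|^2)\|^2$, and the $O(H^4)$ $L^2$-approximation property of $P\LOD$ applied to the relevant densities.

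For the convergence rate I would introduce the $a(\cdot,\cdot)$-Ritz projection $R\LOD:H^1_0(\D)\to\VLOD$ and split $u(t_n)-u^{n}\LOD=\rho^n+e^n$ with $\rho^n:=u(t_n)-R\LOD u(t_n)$ and $e^n:=R\LOD u(t_n)-u^{n}\LOD\in\VLOD$. By \eqref{model-problem} and the regularity \ref{A4}--\ref{A6}, the functions $-\triangle u(t)+\Va u(t)$ and $-\triangle\partial_t u(t)+\Va\partial_t u(t)$ lie in $H^2(\D)$, so the ideal estimates \eqref{H1-conv}--\eqref{L2-conv} give $\|\rho(t)\|+\|\partial_t\rho(t)\|\lesssim H^4$ and $\|\rho(t)\|_{H^1(\D)}\lesssim H^3$ uniformly in $t$; moreover $e^0=0$ since $u^0\LOD=R\LOD u^0$ by \eqref{LOD-approx-initial-value}. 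Hence it suffices to show $\max_n\|e^n\|\lesssim\tau^2+H^4$. Subtracting \eqref{FullyDiscrete} from the exact equation tested at $t_{n+1/2}$ against $v\in\VLOD$, and rewriting $\langle\nabla\cdot,\nabla v\rangle+\langle V\cdot,v\rangle=a(\cdot,v)+\langle\Vb\,\cdot,v\rangle$, yields
$$
\ci\langle D_\tau e^n,v\rangle=a(e^{n+1/2},v)+\langle\Vb\,e^{n+1/2},v\rangle+\beta\big(\mathcal{N}^n(R\LOD u)-\mathcal{N}^n(u\LOD)\big)(v)-\langle\mathcal{T}^n,v\rangle,
$$
where $\mathcal{N}^n(z)(v):=\langle\tfrac12 P\LOD(|z^{n+1}|^2+|z^n|^2)\,z^{n+1/2},v\rangle$ and the consistency residual $\mathcal{T}^n$ collects (i) the Crank--Nicolson time-discretisation errors, which are $O(\tau^2)$ in $L^2$ by Taylor expansion of $u$ in time, using \ref{A6} and integration by parts to move every spatial derivative off the test function; (ii) the lower-order projection remainders, which are $O(H^4)$ in $L^2$ — here the principal projection error vanishes because $R\LOD$ is the $a(\cdot,\cdot)$-orthogonal projection, i.e.\ $a(\rho^m,v)=0$ for $v\in\VLOD$; and (iii) the nonlinear consistency term $\mathcal{N}^n(R\LOD u)(v)-\langle|u(t_{n+1/2})|^2 u(t_{n+1/2}),v\rangle$, which splits into an $O(\tau^2)$ time-averaging part and an $O(H^4)$ part controlled via $\|\rho^m\|\lesssim H^4$, the $L^2$-stability of $P\LOD$, and $\|(I-P\LOD)(|u(t_m)|^2)\|\lesssim H^4$ (the density $|u(t_m)|^2$ lies in $H^4(\D)\cap H^1_0(\D)$ since $H^4$ is an algebra for $d\le3$, so \eqref{L2-conv} applies to it). Altogether $\|\mathcal{T}^n\|\lesssim\tau^2+H^4$.

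Then I would test the error equation with $v=e^{n+1/2}$ and take imaginary parts. Since $a(\cdot,\cdot)$ is Hermitian and $\Vb$ is real, $a(e^{n+1/2},e^{n+1/2})$ and $\langle\Vb\,e^{n+1/2},e^{n+1/2}\rangle$ are real and drop out, leaving
$$
\tfrac{1}{2\tau}\big(\|e^{n+1}\|^2-\|e^n\|^2\big)\le|\beta|\,\big|\Im\big(\mathcal{N}^n(R\LOD u)-\mathcal{N}^n(u\LOD)\big)(e^{n+1/2})\big|+\|\mathcal{T}^n\|\,\|e^{n+1/2}\|.
$$
The nonlinear difference is estimated as a local Lipschitz increment: decomposing $\mathcal{N}^n(R\LOD u)-\mathcal{N}^n(u\LOD)$ along $R\LOD u(t_{n+1})\mapsto u^{n+1}\LOD$ and $R\LOD u(t_n)\mapsto u^n\LOD$, every term is linear, quadratic or cubic in $e^{n+1},e^n$, and using the $L^2$- and $H^1$-stability of $P\LOD$, the embedding $H^1(\D)\hookrightarrow L^4(\D)$, the a~priori bound $\|u^n\LOD\|_{H^1(\D)}\lesssim1$ (uniform in $n$, since the conserved $E\LOD[u^n\LOD]=E\LOD[u^0\LOD]$ controls $\|\nabla u^n\LOD\|^2$ as $V\ge0$ and $\beta\ge0$), and $\|u(t_n)\|_{W^{1,\infty}(\D)}\lesssim1$ from \ref{A6}, one obtains a bound of the form $\lesssim(\|e^{n+1}\|+\|e^n\|)\|e^{n+1/2}\|$. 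Multiplying by $\tau$, summing from $0$ to $n-1$, using $e^0=0$, inserting $\|\mathcal{T}^m\|\lesssim\tau^2+H^4$, and applying a discrete Gronwall inequality (valid for $\tau\le\tau_0$, which lets one absorb the implicit $\|e^{n+1}\|\|e^{n+1/2}\|$ contribution into the left-hand side) gives $\max_n\|e^n\|\lesssim\tau^2+H^4$; combined with $\|\rho^n\|\lesssim H^4$ this is the claim.

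\emph{Main obstacle.} The delicate point is the nonlinear term in space dimensions $d=2,3$, where the conserved energy yields only an $H^1$- and not an $L^\infty$-bound on $u^n\LOD$, so the Lipschitz estimate cannot be closed by $L^\infty$-arguments alone. I expect to handle this either by replacing $L^\infty$-bounds with Gagliardo--Nirenberg interpolation between $L^2$ and $H^1$ (tracking the crude bound $\|e^n\|_{H^1(\D)}\le\|u(t_n)\|_{H^1(\D)}+\|u^n\LOD\|_{H^1(\D)}\lesssim1$), or by invoking an inverse inequality on $\VLOD$ — available because $\VLOD=R\LOD(V_H)$ with $R\LOD$ $H^1$-stable and $V_H$ admitting the standard inverse estimate — inside a bootstrap argument: assume $\max_{m\le n}\|e^m\|$ lies below a threshold respected by the target bound $\tau^2+H^4$ for $H,\tau$ small, deduce $\|u^n\LOD\|_{L^\infty(\D)}\lesssim1$, and then close the induction. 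This bootstrap and the absorption step in the Gronwall estimate are precisely where the hypothesis $\tau\le\tau_0$ is consumed. A secondary technical nuisance is the bookkeeping of $\mathcal{T}^n$: because the modified scheme inserts $P\LOD$ only in the cubic term, one must verify that every Taylor and projection remainder can be paired with $e^{n+1/2}$ in the $L^2$-inner product rather than in the $H^{-1}$--$H^1$ duality, which is exactly why the spatial $H^4$-regularity of $\partial_t^k u$ in \ref{A6} is required.
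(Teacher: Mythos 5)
Your outline of the conservation statements and of the energy-accuracy argument (conservation of $E\LOD$ plus the initial $H^6$-superapproximation plus $E-E\LOD=\mathcal{O}(H^8)$) matches the paper, but two of your steps contain genuine gaps. First, the existence argument: you claim that inserting $P\LOD$ into the cubic term ``does not disturb the degree argument once $\tau$ is small.'' It does. In the classical scheme the quartic contribution $\langle(|z_\alpha|^2+|u^n\LOD|^2)(z_\alpha+u^n\LOD),z_\alpha\rangle$ is controlled because $|z_\alpha|^2+|u^n\LOD|^2\ge 0$; after projection, $P\LOD(|z_\alpha|^2+|u^n\LOD|^2)$ need not be nonnegative, the cross term is cubic in $|\boldsymbol{\alpha}|$ with no sign, and no positive quartic term is guaranteed to dominate it, so $\tfrac1\tau\|z_\alpha\|^2$ cannot save the Browder condition for large $|\boldsymbol{\alpha}|$ no matter how small $\tau$ is. The paper is forced to introduce a truncated nonlinearity $\chi_M\circ P\LOD$, prove existence of truncated solutions, and pass to the limit $M\to\infty$ using the mass identity $\|u^{n,(M)}\LOD\|=\|u^n\LOD\|$.

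Second, and more seriously, your $L^2$-error argument cannot be closed under the stated hypothesis $\tau\le\tau_0$ in $d=2,3$. You use the classical one-step splitting $u(t_n)-u^n\LOD=\rho^n+e^n$ with the Ritz projection, so the discrete error $e^n$ necessarily carries the full consistency $\mathcal{O}(\tau^2+H^4)$. To run the Lipschitz estimate for the nonlinearity you need $\|u^n\LOD\|_{L^\infty(\D)}\lesssim 1$; your own proposed bootstrap via the inverse inequality gives $\|e^n\|_{L^\infty(\D)}\lesssim H^{-2}\|e^n\|\lesssim H^{-2}(\tau^2+H^4)$, which is bounded only under the coupling $\tau\lesssim H$ --- a condition the theorem does not assume --- while the Gagliardo--Nirenberg alternative leaves an $\|e\|_{L^4}$ factor that is not controlled by $\|e\|_{L^2}$ at the optimal rate. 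The paper's proof is structured precisely to avoid this: it inserts the \emph{semi-discrete} Crank--Nicolson solution $u^n$ as an intermediate object (Lemma \ref{lemma-semidiscrete-CN}), proves $\tau$-independent bounds $\|\triangle u^n\|_{H^2(\D)}\lesssim1$ (Lemma \ref{timedsc-regularity}) and the improved estimate $\|u(t_n)-u^n\|_{H^2(\D)}\lesssim\tau^2$ together with $\|\triangle(D_\tau u^{n-1/2})\|_{H^2(\D)}\lesssim1$ (Lemma \ref{optimal-H2-est-lemma}), and then uses a summation-by-parts trick to replace $D_\tau u^k$ by $D_\tau u^{k-1/2}$ in the error recursion so that the $\VLOD$-part of the error satisfies the \emph{pure} bound $\|A\LOD(u^n)-u^{n,(M)}\LOD\|\lesssim H^4$ with no $\tau^2$-contribution; only then does the inverse inequality yield $\|u^{n,(M)}\LOD\|_{L^\infty(\D)}\lesssim 1+H^2$ free of any mesh-ratio condition, which removes the truncation. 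Without this detour (or some substitute for it) your argument proves a weaker theorem with a CFL-type restriction. A minor additional point: the $H^6$-accuracy of $E[u^n\LOD]$ requires comparing $|u^n\LOD|^2$ with a smooth density, hence already uses the $L^2$-error estimate and the $L^\infty$-bound, so it cannot be ``dispatched first'' as you propose --- the paper derives it as Corollary \ref{ExactEnergyThm} after Lemma \ref{lemma-op-L2-est}.
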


\begin{remark}
Provided the existence of an analytical solution $u$, discrete solutions $u^{n}\LOD$ and semi-discrete solutions in the sense of Lemma \ref{lemma-semidiscrete-CN} below, the estimates of Theorem \ref{main-theorem-modified-CN} remain valid in the regime $\beta<0$, i.e., when assumption \ref{A3} is dropped.
\end{remark}

Since the proof of Theorem \ref{main-theorem-modified-CN} is extensive and requires several auxiliary results we present it in a separate section. Before that, we discuss some practical aspects of the method, such as its implementation, and we demonstrate its performance for a test problem with known exact solution. The proof of Theorem \ref{main-theorem-modified-CN} follows in Section \ref{section-proof-main-result}.

\section{Implementation}
\label{section-implementation}
In this section we present some implementation details on how to assemble and solve the nonlinear system in an efficient way. Recalling $\varphi_i$ as the LOD basis functions that span the $N_H$-dimensional space $\VLOD$, we introduce short hand notation for the following matrices $\mathbf{M}, \mathbf{A}, \mathbf{M_V} \in \R^{N_H \times N_H}$ and vector $\boldsymbol{U_{\Gamma}} \in \C^{N_H}$:

\begin{align*}
&(\mathbf{M})_{ij} := \langle \varphi_j , \varphi_i \rangle, \qquad  \mathbf{A}_{ij} := \langle \nabla \varphi_j , \nabla \varphi_i \rangle, \qquad (\mathbf{M_V})_{ij} = \langle V \varphi_j,\varphi_i\rangle, \\
& (\boldsymbol{U_{\Gamma}}(\mathbf{U},\mathbf{V}) )_i :=  \beta \Big\langle P\LOD\big(|\sum_{k=1}^{N_H} \varphi_k \mathbf{U}_k|^2+|\sum_{k=1}^{N_H} \varphi_k \mathbf{V}_k|^2\big)(\mathbf{U}+\mathbf{V}),\varphi_i\Big\rangle.
\end{align*} 
Equation \eqref{FullyDiscrete} in matrix-vector form becomes:
\begin{align}\label{NonlinearMatrixEq}
\ci \mathbf{M}\frac{\mathbf{U}^{n+1}-\mathbf{U}^n}{\tau} = \mathbf{A}\frac{\mathbf{U}^{n+1}+ \mathbf{U}^n}{2} + \mathbf{M_V}\frac{\mathbf{U}^{n+1}+\mathbf{U}^n}{2} + \frac{\boldsymbol{U_\Gamma}( \mathbf{U}^n, \mathbf{U}^{n+1})}{4},
\end{align}
where $\mathbf{U}^n \in \C^{N_H}$ is the solution vector in the LOD space, i.e., $u\LOD^n = \sum_{k=1}^{N_H} \mathbf{U}_k^n \varphi_k$.
To solve the nonlinear vector equation \eqref{NonlinearMatrixEq} we propose a fixed point iteration. Let 
$$
\mathbf{L} :=  \mathbf{M}+\tfrac{\ci\tau}{2}( \mathbf{A}+ \mathbf{M_V})$$ 
and $ \mathbf{L}^*$ be its Hermitian adjoint. Our fixed point iteration takes the form:
\begin{equation}\label{FPI}
\mathbf{U}^{n+1}_{m+1} =  \mathbf{L}^{-1} \mathbf{L}^*\mathbf{U}^n - \tfrac{\ci \tau}{4} \mathbf{L}^{-1} \boldsymbol{U_\Gamma} (\mathbf{U}^{n+1}_m, \mathbf{U}^n) \qquad \text{for } m=0,1,2,\dots
\end{equation} 
and $\mathbf{U}^{n+1}_0 = \mathbf{U}^n$.
Here we note that matrix $ \mathbf{L}$ does not change with time. Hence, the above iteration can be done efficiently by precomputing the LU-factorization of $ \mathbf{L}$, which is of size $N_H\times N_H$. However, in each iteration the vector $\boldsymbol{U_\Gamma}$ must be assembled. As a first step we consider the problem of computing $\rho^n=P\LOD(|u^n\LOD|^2)$. By definition we have that $\langle \rho^n,\varphi_i\rangle = \langle |u^n\LOD|^2,\varphi_i\rangle$ for all $\varphi_i\in V\LOD$.
The vector $\langle |u^n\LOD|^2,\varphi_i\rangle$, requires computing the expression
\begin{align*}
\langle |u^n\LOD|^2,\varphi_i\rangle & = \langle \sum_{k=1}^{N_H}\sum_{j=1}^{N_H} \mathbf{U}^n_k\overline{\mathbf{U}}^n_j \varphi_k \varphi_j,\varphi_i\rangle 
= \underset{k\leq j}{\sum_{k,j=1 }^{N_H}}
\Re(\mathbf{U}^n_k \overline{\mathbf{U}}^n_j) \hspace{2pt} (2-\delta_{kj}) \hspace{2pt}
\langle  \varphi_k\varphi_j,\varphi_i\rangle \\
& = \underset{k\leq j}{\sum_{k,j=1 }^{N_H}} \Re(\mathbf{U}^n_k\overline{\mathbf{U}}^n_j) \hspace{2pt} (2-\delta_{kj}) \hspace{2pt} \boldsymbol{\omega}_{kji}, \quad\mbox{where } \boldsymbol{\omega}_{kji}:= \langle  \varphi_k\varphi_j,\varphi_i\rangle.
\end{align*}
The tensor $\boldsymbol{\omega}_{kji}$ is very sparse as it is zero whenever  $\supp(\varphi_i)\cap\supp(\varphi_j)\cap \supp(\varphi_k)\hspace{-2pt} = \emptyset$. More importantly it can be completely precomputed and will, due to the exponential decay of the basis functions, have many approximately zero values since the LOD-basis functions decay exponentially. Therefore setting a tolerance on the entries of $\boldsymbol{\omega}$ can significantly lower the computational cost without loss of accuracy. 
Due to the typically local structure of the basis functions (in the sense of Remark \ref{remark-structure-aVaVb}), computing $\boldsymbol{\omega}_{kji}$ needs only be done for a handful of entries which can be done in parallel. Once $\boldsymbol{\omega}$ is computed it can be reused in the computation of $\boldsymbol{U_\Gamma}$. For example, with $\rho^{n+1/2} := P\LOD(|u^n\LOD|^2+|u^{n+1}\LOD|^2)$ and the representation $\rho^{n+1/2} = \sum_{k=1}^{N_H}  \boldsymbol{\varrho}_k^{n+1/2} \varphi_k$ we have
\begin{align*}
(\boldsymbol{U_\Gamma})_i &= \langle \rho^{n+1/2}u\LOD^{n+1/2}, \varphi_i\rangle  = \sum_{k,j=1}^{N_H} \langle \boldsymbol{\varrho}_k^{n+1/2} \varphi_k  \hspace{4pt} \boldsymbol{U}^{n+1/2}_j\varphi_j,\varphi_i\rangle \\
&= \sum_{k,j=1}^{N_H} \varrho_k^{n+1/2} \boldsymbol{U}^{n+1/2}_j \boldsymbol{\omega}_{kji}.
\end{align*}

\section{Numerical experiments in 1D - a benchmark problem}
\label{section-numerical-benchmark}
In the following we consider a challenging and illustrative numerical experiment that shows the capabilities of our new approach and which can be used as a benchmark problem for future discretizations of the time-dependent GPE. Even though the experiment is only in $1D$ with a known analytical solution, it is extremely hard to solve it numerically. We believe that the formal simplicity of the problem (in terms of its description) makes it very well suited for benchmarking.

The experiment considers the case of two stationary solitons that are interacting with each other and it was first described in \cite{Exact2010} and numerically studied in \cite{NLSComparison}. The combined behavior of the two solitons is characterized as the solution $u$ to the following focusing Gross--Pitaevskii equation with cubic nonlinearity,
  	\begin{align*}
	\ci \partial_t u = - \partial_{xx} u  - 2|u|^2u \qquad \mbox{in } \mathbb{R}\times (0,T]
	\end{align*}
	and with initial value 
	\begin{align*}
	u(x,0)  =  \frac{8(9e^{-4x}+16e^{4x})-32(4e^{-2x}+9e^{2x})}{-128 + 4e^{-6x}+16e^{6x}+81e^{-2x}+64e^{2x} } & .
	\end{align*}
	As derived in \cite{Exact2010}, the exact solution is given by
\begin{align} \label{StationarySoliton}
u(x,t)& =\frac{8e^{4\ci t}(9e^{-4x}+16e^{4x})-32e^{16\ci t}(4e^{-2x}+9e^{2x})}{-128\cos(12t)+4e^{-6x}+16e^{6x}+81e^{-2x}+64e^{2x}}.
\end{align}
The present problem has interesting dynamics, in particular it is very sensitive to energy perturbations. As we will see below, small errors in the energy will be converted into artificial velocities that make the solitons drift apart. 

The exact solution $u$ is depicted in Fig. \ref{fig:Solution} for $0 \le t \le2$ and is best described as two solitons balanced so that neither wanders off. As is readily seen in \eqref{StationarySoliton} the resulting interaction is periodic in time with period $\pi/2$, but the density $|u|^2$ is periodic with period $\pi/6$. 
As for the previous mentioned time invariants we have conservation of all four: mass $M[u]=12$, energy $E[u] = -48$, momentum $P[u] = 0,$ and center of mass $X_c[u] \approx -1.3863$. It is worth mentioning here that despite being analytic, the $L^2$-norm of its spatial derivative of order $n$ grows geometrically with $n$; already for the 9th derivative the size of the $L^2$-norm is of order $10^{11}$. The growth is even more pronounced for its time derivatives as $\|\partial_t^{(6)}u(x,0)\|\approx\mathcal{O}(10^{11})$.
In \cite{NLSComparison} it was noted that for coarse time steps and non energy conservative schemes the numerical solution had a tendency to split into two separate traveling solitons. An example of this is shown in Fig. \ref{Split}, where the converged state w.r.t. $\tau$ at $T=200$, using the standard Crank--Nicolson method on a mesh of size $h=40/16384$, is two separate solitons. Moreover, the popular Strang splitting spectral method of order 2 (SP2 in \cite{Spectral}), failed on long time scales ($T\geq 200$) due to severe blow-up in energy. In fact, in order to solve the equation on long time scales  extreme resolution in space is required, which is why it makes for an excellent test case. We stress again the issue here: even if the chosen time-discretization is perfectly conservative, it will only preserve the discrete quantities. This means any initial error in mass and energy will be preserved for all times and will severely affect the numerical approximation of $u$.

We now turn to the problem of understanding the observed split and quantifying it in terms of the offset in the discrete energy. To this end we make use of the time invariants to determine which configuration of two solitons is consistent with the original problem.  It is well known that the soliton:
\begin{equation} \label{SingleSoliton}
\psi(x,t) = \sqrt{\alpha}e^{\ci(\frac{1}{2}cx-(\frac{1}{4}c^2-\alpha)t)}\sech(\sqrt{\alpha}(x-ct))
\end{equation}
solves $\ci \partial_t \psi = -\partial_{xx}\psi - 2|\psi|^2\psi$, cf. \cite{SingleSoliton}.
Consider the two solitons, call them $\psi_1$ and $\psi_2$, at a time $T$ long after the split. Due to the exponential decay of each soliton we may, to a good approximation, consider them as separate, i.e., $\psi \approx \psi_1+\psi_2$, where each soliton is described according to \eqref{SingleSoliton}. Referring to \eqref{SingleSoliton}, there are 2 degrees of freedom for each soliton namely $\alpha_1,c_1$ and $\alpha_2,c_2$, where $\alpha_i$ is a shape parameter that determines the amplitude $\sqrt{\alpha_i}$ of the soliton and $c_i$ is the velocity with which the soliton moves. Drawing on inspiration from the exponents in \eqref{StationarySoliton}, we conclude that the shape parameters of the separated solitons would be given by $\alpha_1 = 4$ and $\alpha_2 = 16$. 
Consequently we have $\|\psi_1\|^2 = 4$ and $\|\psi_2\|^2 = 8$, which is consistent with the total mass being $\|u\|^2=12$. Since the momentum is conserved it follows from \eqref{Momentum} that if the momentum is non-zero then the center of mass, $X_c[u]$, evolves linearly.  However due to the periodicity of the solution, $X_c[u]$ cannot evolve linearly, we conclude that the momentum $P[u]$ must be 0. Therefore we must also have $c_1 = -2c_2$.  Lastly we determine the velocities from the energy. The energy being translation invariant we may chose a convenient coordinate system to calculate it; let $y$ and $\tilde{y}$ be translations of $x$ such that the solitons are described by
\begin{align*}
\ &\psi_1(y(x),T) = 2e^{\ci\frac{1}{2}c_1y}\sech(2y) =2e^{-\ci c_2y}\sech(2y) \quad \mbox{and}  \\
 &\psi_2(\tilde{y}(x),T) = 4e^{\ci\frac{1}{2}c_2\tilde{y}}\sech(4\tilde{y}).
\end{align*}
 The energy of each soliton is now calculated. We have
\begin{eqnarray*}
&\partial_x \psi_1 = \partial_y \psi_1 = -\ci c_2\psi_1 -  2\tanh(2y)\psi_1, \qquad  & \partial_x \psi_2 = \partial_{\tilde{y}} \psi_2 =  \ci \frac{c_2}{2}\psi_2 -  4\tanh(4\tilde{y})\psi_2,\\ 
&|\partial_x \psi_1|^2 = c_2^2|\psi_1|^2 + 4 |\psi_1|^2\tanh^2(2y),
 \qquad &	|\partial_x \psi_2|^2 = \frac{c_2^2}{4}|\psi_2|^2 + 16|\psi_2|^2\tanh^2(4\tilde{y}).
\end{eqnarray*}
Thus, 
\begin{align*}
E[\psi_1] &=\int_{\D} |\partial_x\psi_1|^2 - |\psi_1|^4 dx = \frac{16}{3} + c_2^2\|\psi_1\|^2 - \frac{32}{3} = c_2^2\|\psi_1\|^2 -5-1/3,\\
E[\psi_2] &= \int_{\D} |\partial_x\psi_2|^2 - |\psi_2|^4 dx=\frac{128}{3} + \frac{c_2^2}{4}\|\psi_2\|^2 - \frac{256}{3} = \frac{c_2^2}{4}\|\psi_2\|^2-42-2/3.
\end{align*}
Again owing to the separation and the exponential decay it holds approximately $E[\psi_1+\psi_2] = E[\psi_1]+E[\psi_2] =-48+c_2^2\|\psi_1\|^2+c_2^2\|\psi_2\|^2/4$. For complete consistency with the original problem we must have $c_2=0$. However the energy of the discretized problem will not be exactly -48, in fact in turns out that it will be slightly higher. We are thus lead to ponder, what happens if all this extra energy contributes to velocities of the solitons? Denote the error in energy by $\epsilon_h$, i.e., $E[u^0_h]+48 = \epsilon_h$. 
Suppose all of this extra energy is contributing to the velocities, then $\epsilon_h = 4c_2^2  + 2c_2^2 = 6c_2^2$ and we conclude 
\begin{align*}
 |c_2| & = \sqrt{\frac{\epsilon_h}{6}}  \qquad \mbox{and with $c_1 = -2 c_2$ that} \qquad
 |c_1| = \sqrt{\frac{2\epsilon_h}{3}}.
\end{align*}
 If the quantity $T\sqrt{\epsilon_h}$ is not small the error will be of $\mathcal{O}(1)$ as the converged result w.r.t. $\tau$ will be two separate solitons with velocity $\propto \sqrt{\epsilon_h}$. Note however, that this analysis does not say when the split occurs. 

Due to the exponential decay, we restrict our computations to a finite computational domain of size $[-20,20]\times(0,T]$ and prescribe homogenous Dirichlet boundary conditions on both ends of the spatial interval. The results are divided into 4 parts: first we confirm the 6th order convergence rates of the energy of the initial value derived in Section \ref{Functionals}, next we confirm the optimal convergence rates on a short time scale, in Section \ref{T200} we present plots for $T=200$ confirming the analysis of the split completed with convergence rates.

\begin{figure}[H]
    \centering
    \begin{subfigure}[b]{0.35\textwidth}
        \includegraphics[width=\textwidth]{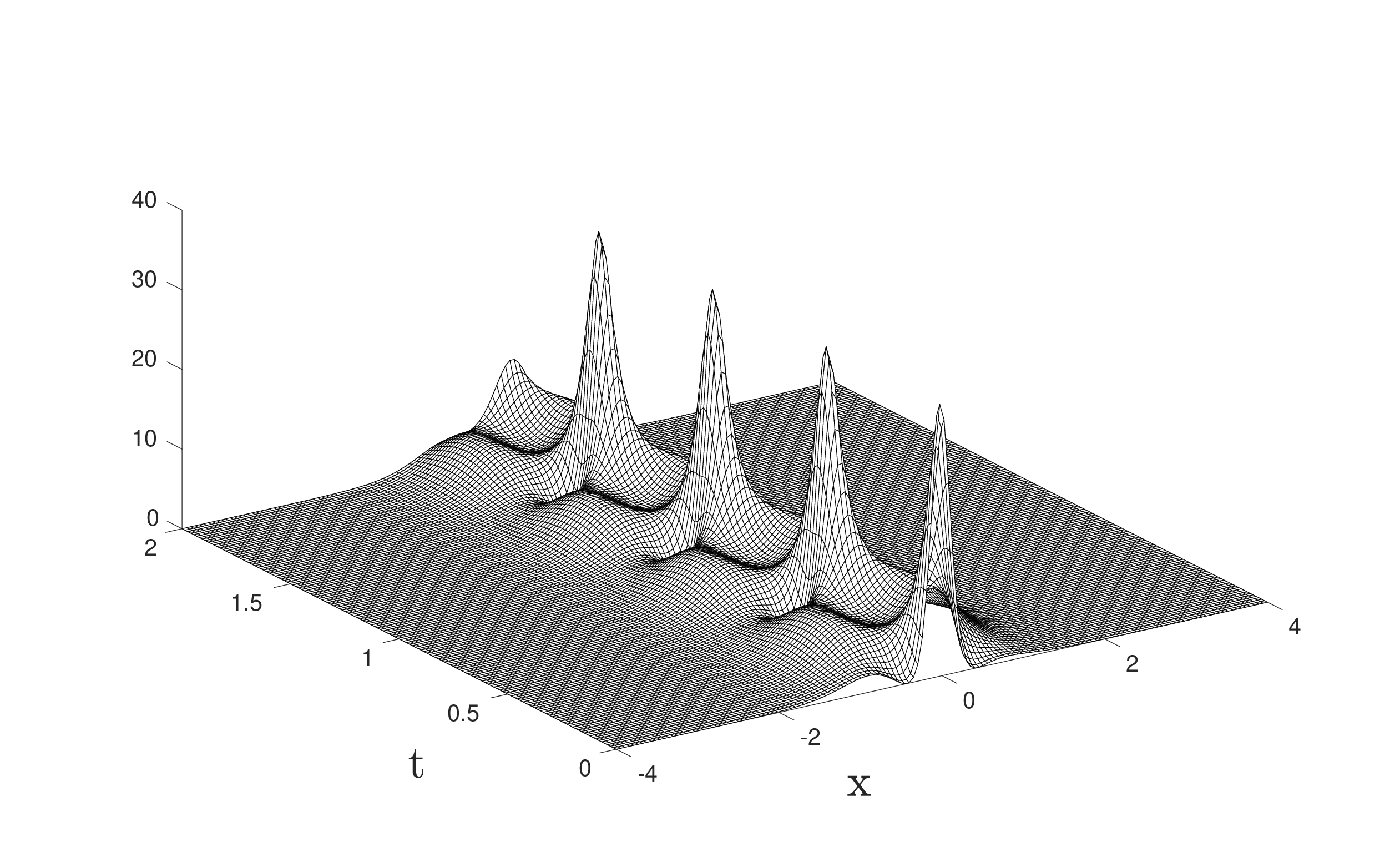}
        \caption{$|u|^2$}
    \end{subfigure}\hspace{-25pt}
    ~ 
    \begin{subfigure}[b]{.35\textwidth}
        \includegraphics[width=\textwidth]{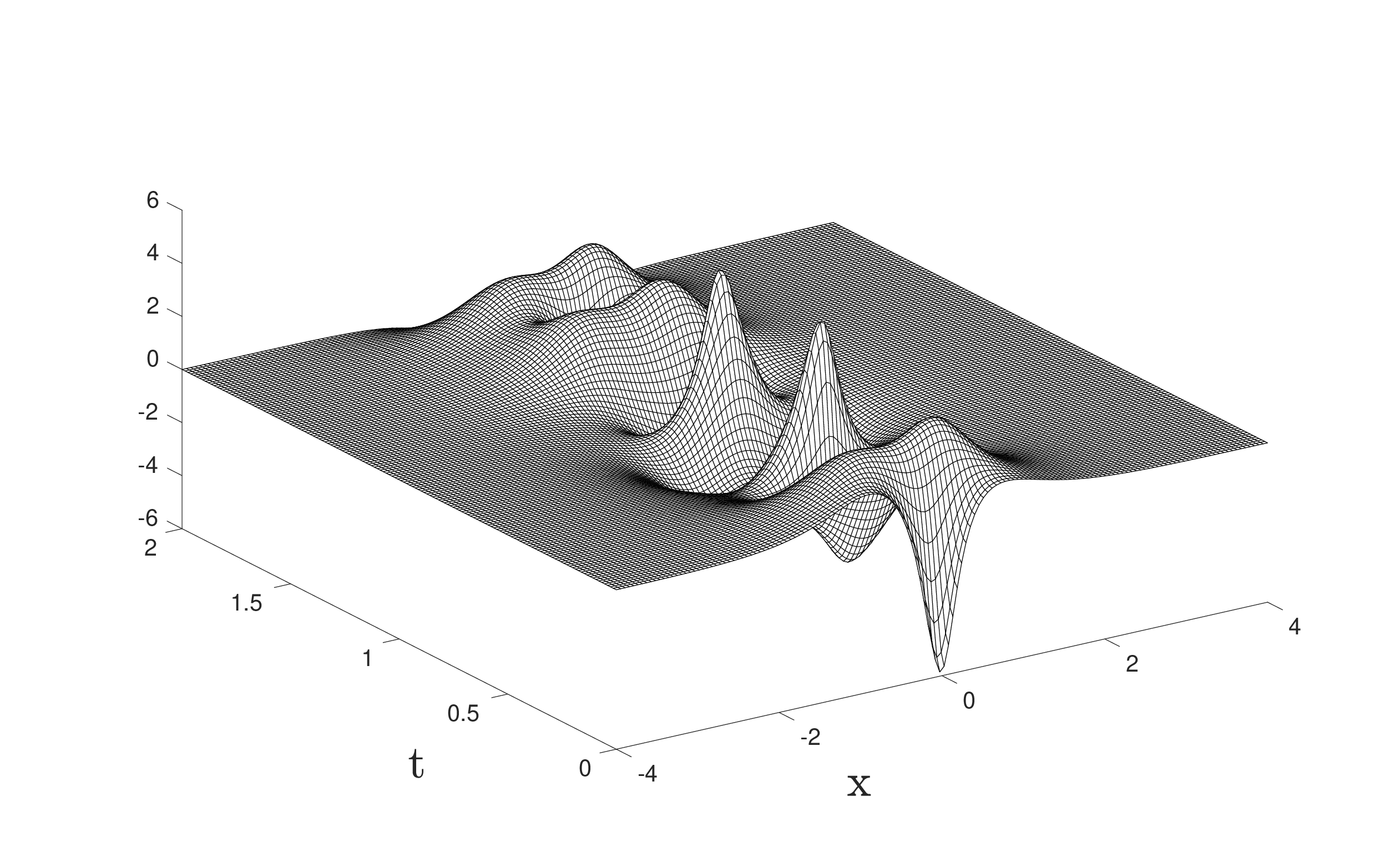}
        \caption{$\Re u$}
    \end{subfigure}\hspace{-25pt}
    ~ 
    \begin{subfigure}[b]{0.35\textwidth}
        \includegraphics[width=\textwidth]{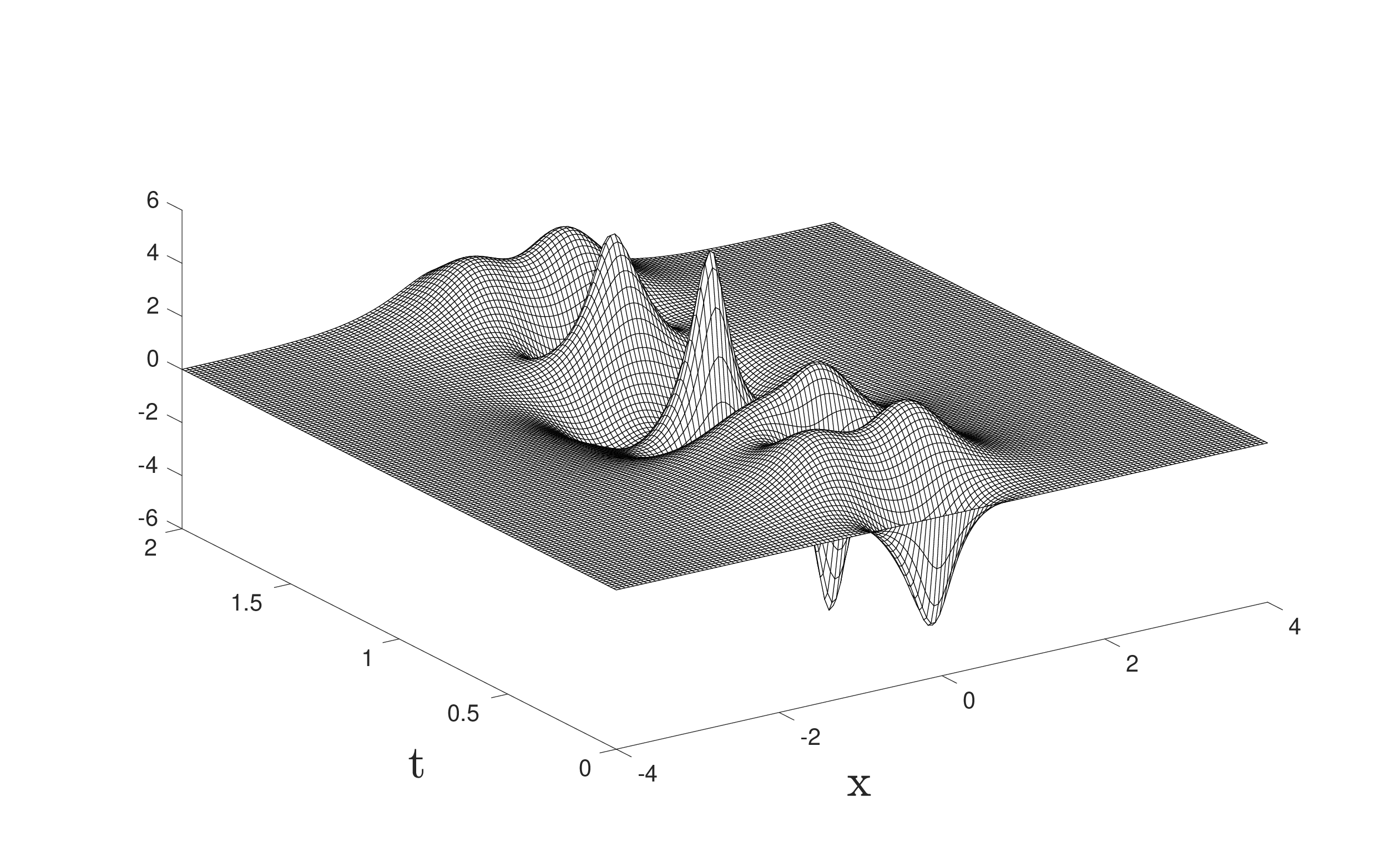}
        \caption{$\Im u$}
    \end{subfigure}
    \caption{Figures (a)-(c) show the time periodic solution to the two stationary soliton test case.}
\label{fig:Solution}
\end{figure}

\subsection{Localization of basis functions}
\label{subsection-numexp-loc-basis}
For computational purposes it is important to localize the basis functions of the LOD space according to the descriptions in Section \ref{subsection-localization-LOD}, where $\ell$ denotes the truncation parameter that characterizes the diameter of the support of the basis functions (which is of order $\mathcal{O}(\ell H)$). The local linear elliptic problems \eqref{local-LOD-problems} that need to be solved to construct the basis functions are discretized with standard $P1$-FEM on a fixed fine mesh of size $h=40/2^{21}$ in most of our experiments (except for the comparison experiments in Section \ref{ConvergenceEnergySection}, where we investigate the influence of $h$ and the first set of experiments in Table \ref{CPUs}). Note that there are only $\mathcal{O}(\ell)$ local problems that have to be solved for and the remaining basis functions are obtained through translations and reflections. The total CPU times stated in this paper for LOD-based methods include the time for computing the corresponding basis functions.

For a better distinction, we shall in the following refer to $H$ as the coarse mesh size (as it determines the dimension of the LOD space) and $h$ as the fine mesh size which  limits the numerical resolution with which the LOD-basis functions are represented.

\subsection{Convergence of energy}\label{ConvergenceEnergySection}

In this experiment, the energy is calculated for different coarse meshes of sizes $H$ and the number of coarse layer patches is fixed to  $\ell = 12$ corresponding to a sufficiently accurate approximation of the ideal global basis functions. For comparison, we also show the influence of the fine mesh size $h$ on which we represent the LOD basis functions. The 6th order convergence of the energy predicted by Theorem \ref{theorem-superapproximation} is confirmed in Fig. \ref{Energy_Conv}. In Fig. \ref{Decay} we show the influence of the truncation parameter $\ell$ on the energy for different discretizations.  We observe that only a small number of layers is needed to capture the full potential of the LOD-basis functions, e.g., $\ell = 5 \approx 2 |\log(H)|$ suffices for $H=40/2^9$ (i.e., $N_H = 512$). The figure also clearly shows the logarithmic relationship between the mesh size and optimal values for $\ell$.

	\begin{figure}[H]
\begin{subfigure}[t]{0.45\textwidth}
\includegraphics[scale=0.5]{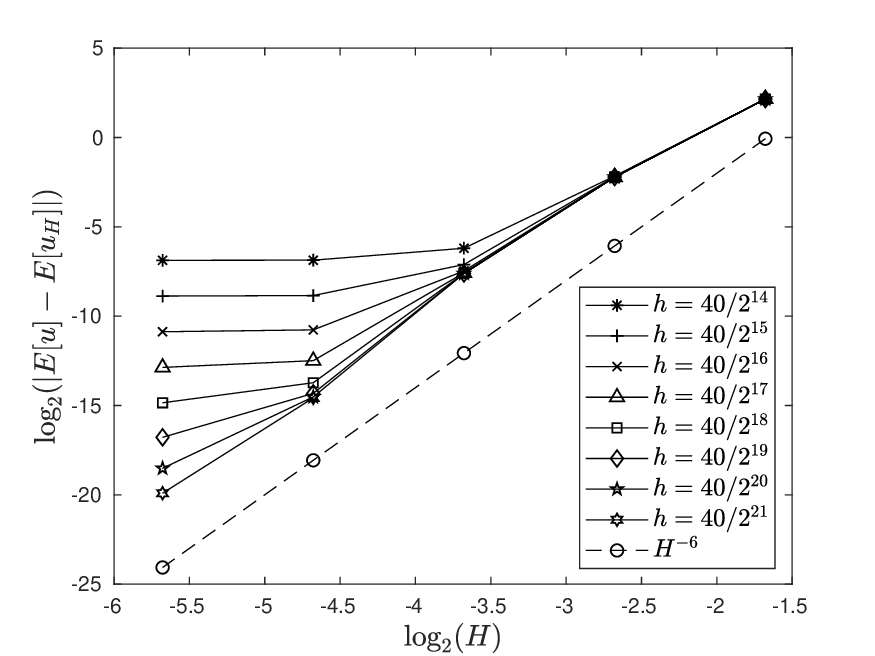} 
\caption{ 6th order convergence of the energy, $\ell = 12$ (approximately global basis functions).} \label{Energy_Conv}
		\end{subfigure}
		~
		\begin{subfigure}[t]{0.45\textwidth}
		\centering
        \includegraphics[scale=0.5]{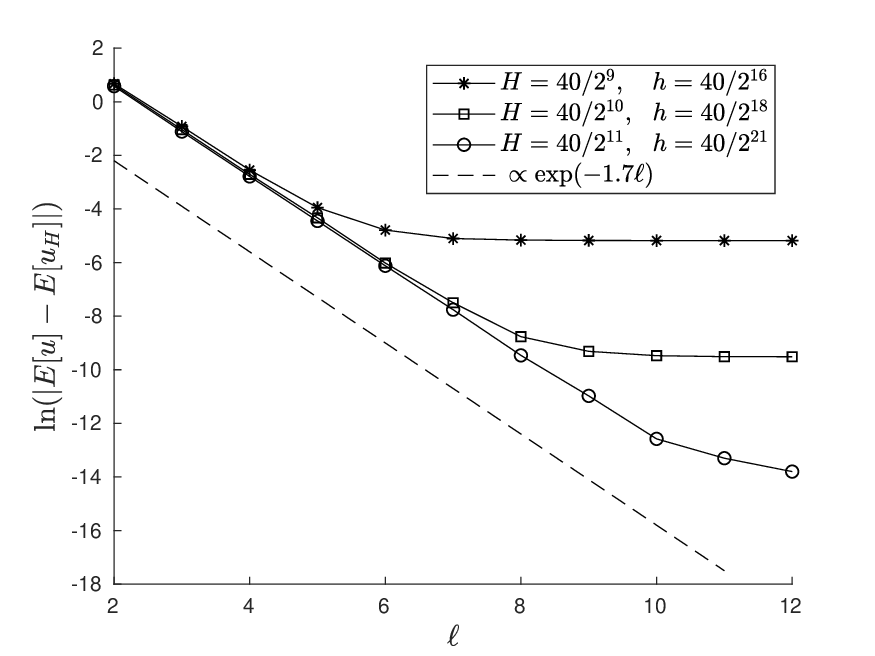}
        \caption{Error in energy versus localization of basis functions, $\ell$.  }
			\label{Decay}
		\end{subfigure}%
		\caption{Influence of the fine mesh size $h$ and localization parameter $\ell$.}
\end{figure}

\subsection{Short time $L^{\infty}(L^2)$- and $L^{\infty}(H^1)$-convergence rates for $T=2$}
Again we study the convergence rates in $H$, 
where the LOD space is computed as described in Section \ref{subsection-numexp-loc-basis} with fixed truncation parameter $\ell=12$.
The final time is set to  $T= 2$ and the number of time steps is set to $N = 2^{18}$ in order to isolate the influence of $H$.
\begin{table}[H]
\centering
 \begin{tabular}{ c c c c c}
$H$ & $\|u-u\LOD\|/\|u\|$ & $\frac{\|u-u_{\text{\tiny LOD},H}\|}{\|u-u_{\text{\tiny LOD},H/2}\|}$   & $\log_2\big(\frac{\|u-u_{\text{\tiny LOD},H}\|}{\|u-u_{\text{\tiny LOD},H/2}\|}\big)$  & CPU [h]\\ \hline
 $40/2^8$ &  1.853126	 & 199 & 7.6 & 0.4 \\
$40/2^9$ &   0.009330 &  225 & 7.8 & 0.6 \\
$40/2^{10}$& 0.000042&  42 & 5.4 & 1.0 \\
$40/2^{11}$& 0.000001&  & & 2.0 \\ 
& & & & \\
$H$ & $\|\nabla(u-u\LOD)\|/\|\nabla u\|$ & $\frac{\|\nabla(u-u_{\text{\tiny LOD},H})\|}{\|\nabla(u-u_{\text{\tiny LOD},H/2})\|}$   & $\log_2\big(\frac{\|\nabla(u-u_{\text{\tiny LOD},H})\|}{\|\nabla(u-u_{\text{\tiny LOD},H/2})\|}\big)$ & CPU [h] \\ \hline
$40/2^8$   &1.734525 & 116 & 6.9 & 0.4 \\ 
$40/2^9$   &0.014931 & 182 & 7.5 & 0.6\\ 
$40/2^{10}$&0.000082 & 7.5 & 2.9 & 1.0 \\ 
$40/2^{11}$&0.000011 & & & 2.0
\end{tabular}
\caption{Error table over varying $H$ for final time $T=2$, truncation parameter is $\ell = 12$ and the number of time steps is $N = 2^{18}$.}\label{Table_ShortTime}
\end{table}
In Table \ref{Table_ShortTime} we observe that the rate of convergence in the $L^{\infty}(L^2)$-norm is initially higher than predicted but seems to flatten out to the expected $\mathcal{O}(H^4)$. A similar observation is made for the error in $L^{\infty}(H^1)$-norm, where we observe asymptotically a convergence rate of order $\mathcal{O}(H^3)$.

\subsection{CPU times}

In Table \ref{CPUs} we make a comparison between different implementations of the Crank--Nicolson method in terms of CPU time per time step. The computations were performed on an Intel Core i7-6700 CPU with 3.40GHz$\times$8 processor. The CN-FEM refers to the solution to \eqref{Crank} in a standard $P1$ Lagrange finite element space on a quasi-uniform mesh with fine mesh size $h$ (which is the same mesh size on which the LOD basis functions are computed). Hence, the methods in the comparison have the same numerical resolution. The nonlinear equation that has to be solved in each time step was either solved by Newton's method or by the fixed point iteration of the form \eqref{FPI}. The respective schemes are accordingly indicated by CN-FEM Newton and CN-FEM FPI in Table \ref{CPUs}. To make the comparison fair we discretize the CN-FEM schemes using the mesh on which the LOD-basis is represented and choose $N_H$ and $\ell$ so large that the energy is represented with equal precision by the methods. We stress that we did not observe any dependency of the number of fixed point iterations on the mesh size. The number may, however, increase with larger $\beta$ and decrease with smaller time step sizes, $\tau$. For this example the stopping criterion was set to $\| u^{n+1}_{\text{\tiny{LOD}},i+1}- u^{n+1}_{\text{\tiny{LOD}},i}\| \leq 10^{-10}$. The speed-up of CN-LOD compared to CN-FEM ranges from 500 to 1200. Some of the computations in the next subsection required a day or two thereby putting them completely out of reach of the Crank--Nicolson method with classical $P1$ finite element spaces. 
\begin{table}[H] 
\centering 
\begin{tabular}{c|c|c|c|c|}
\multicolumn{5}{c}{One time step with $N_H=1024,\ h = 40/2^{18}$ and step size $\tau = 200/2^{21}$}\\
 & CN-FEM Newton & CN-FEM FPI  &    CN-FEM LOD $\ell = 7$ & CN-FEM LOD $\ell = 10$ \\ \hline
CPU [s] & 4.5 & 2 & 0.0095 & 0.014 \\ \hline
$E-E_h$ & 3.33e-5 & 3.33e-5 & 5.5e-4 &  7.7e-5 \\ \hline 
N\textsuperscript{\underline{o}} it. & 3 & 5 &5 &5 \\ \hline
\end{tabular} 
\\
\begin{tabular}{c|c|c|c|c|}
\multicolumn{5}{c}{One time step with  $N_H=2048,\ h=40/2^{21}$, and step size $\tau = 200/2^{21}$}\\
 & CN-FEM Newton & CN-FEM FPI &     CN-FEM LOD $\ell= 10$ & CN-FEM LOD $\ell = 12$ \\ \hline
CPU [s] & 36 & 15.9 &     0.029& 0.032 \\ \hline
$E-E_h$ & 5.2e-7 & 5.2e-7 & 3.3e-6 & 9.7e-7 \\ \hline 
N\textsuperscript{\underline{o}} it. & 3 & 5 &5 &5 \\ \hline
\end{tabular}
\caption{CPU times in seconds for some different approaches to solving the nonlinear system of equations arising from the Crank--Nicolson discretization of the stationary soliton problem. The CN-FEM refers to the classical Crank--Nicolson finite element method \eqref{Crank} on the fine grid $h$. The stopping criterion was set to $\| u^{n+1}_{\text{\tiny{LOD}},i+1}- u^{n+1}_{\text{\tiny{LOD}},i}\| \leq 10^{-10}$.}\label{CPUs}
\end{table}
The precomputations for this example are completely negligible as only $\mathcal{O}(\ell)$ local problems need to be solved for all interior basis functions. For example, consider the finest discretization in this paper for which $2^{11}$ LOD-basis functions are represented on a fine grid of dimension $2^{21}$, for this discretization solving the  linear system of equations that gives the interior basis functions by means of a direct solver such as LAPACK requires only 0.04 seconds. Computing the tensor $\boldsymbol{\omega}$, described in Section \ref{section-implementation}, requires for the very same discretization around one minute. As the space is low dimensional the LU-factorization of the matrix $\mathbf{L}$ requires only a few seconds even for the finest discretization with the LOD space of size $N_H = 2^{11}$.

\subsection{Long time $L^{\infty}(L^2)$- and $L^{\infty}(H^1)$-convergence rates  for $T=200$} \label{T200}
As previously described in this section, an error in the energy produces, for large final computational times, a highly noticeable drift that can only be remedied by increasing spatial resolution. In Figures \ref{Split} through \ref{2048} we illustrate how the split into two separate solitons diminishes as the spatial resolution is increased for final time $T=200$. Fig. \ref{Split} shows the converged solution w.r.t. $\tau$ of the classical Crank--Nicolson method (i.e., even smaller time steps will not improve the approximation). We observe that the solution is fully off in this case for a classical finite element space of dimension 16\hspace{2pt}384. In Figures \ref{1024} and \ref{2048} we can see the numerical approximation in LOD spaces of dimension $N_H=1024$ and $N_H=2048$. We observe that $u\LOD$ captures the correct long time behavior, where for $N_H=2048$ it is no longer distinguishable from the analytical reference solution.

\begin{figure}[H]
    \centering
    \begin{subfigure}[b]{0.33\textwidth}
        \includegraphics[width=\textwidth]{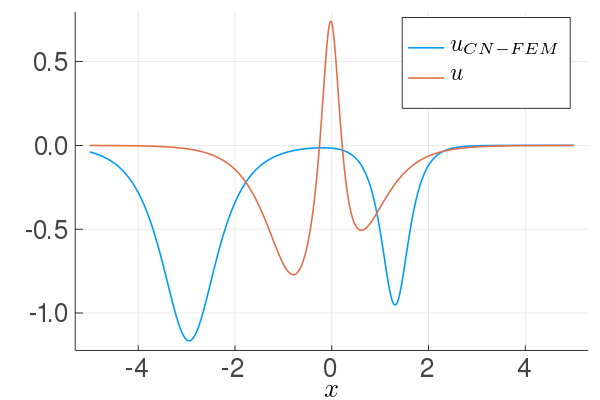}
        \caption{$\Re u$}
    \end{subfigure}\hspace{-10pt}
    \begin{subfigure}[b]{0.33\textwidth}
        \includegraphics[width=\textwidth]{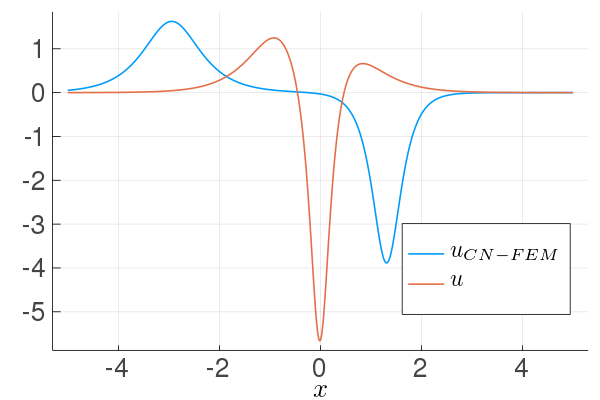}
        \caption{$\Im u$}
    \end{subfigure}\hspace{-10pt}
    \begin{subfigure}[b]{0.33\textwidth}
        \includegraphics[width=\textwidth]{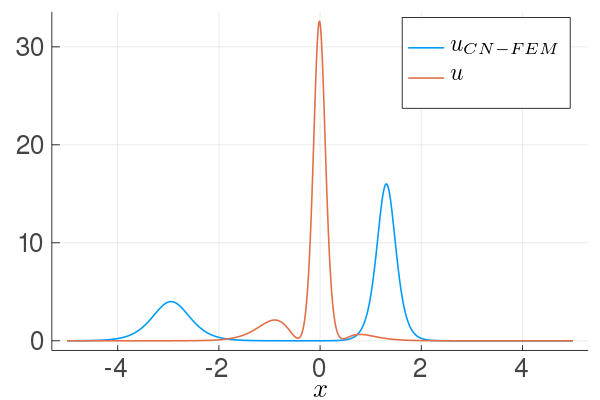}
        \caption{$|u|^2$}
    \end{subfigure}
\caption{Converged solution $u_h$ w.r.t. $\tau$ at $T=200$ of CN-FEM FPI using $h=40/2^{14}$ and $N=2^{21}$ time steps. We have $E[u_h]=-47.9914743 $, $c_1 =0.075$ (drift velocity of left going soliton as estimated in Section \ref{section-numerical-benchmark}). The relative $L^2$ and $H^1$ errors are   $\|u_h-u\|/\|u\|=1.447$ and $\|\nabla(u-u_h)\|/\|\nabla u\| $ = 1.148. The  required CPU time was 28h.}\label{Split} 
\end{figure}

\begin{figure}[H]
    \centering
    \begin{subfigure}[b]{0.33\textwidth}
        \includegraphics[width=\textwidth]{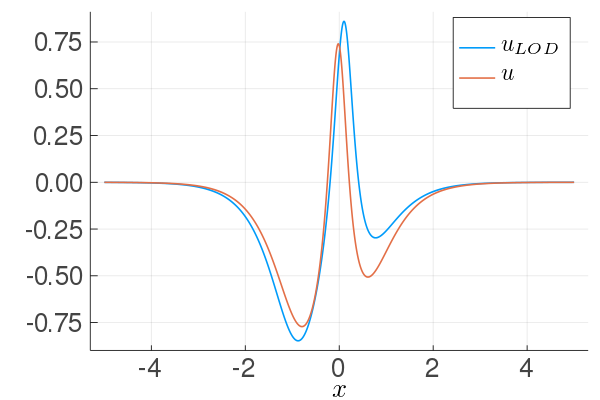}
        \caption{$\Re u $}
    \end{subfigure}\hspace{-10pt}
    \begin{subfigure}[b]{0.33\textwidth}
        \includegraphics[width=\textwidth]{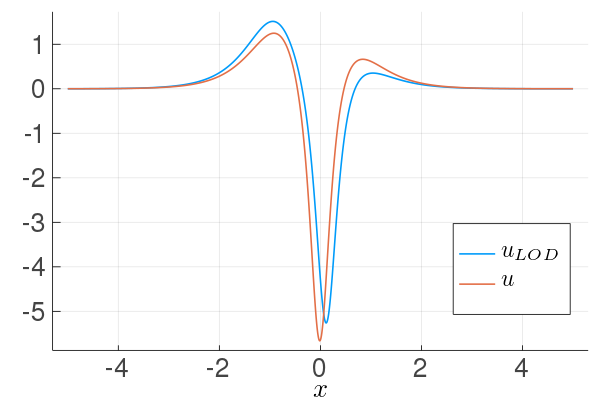}
        \caption{$\Im u$}
    \end{subfigure}\hspace{-10pt}
    \begin{subfigure}[b]{0.33\textwidth}
        \includegraphics[width=\textwidth]{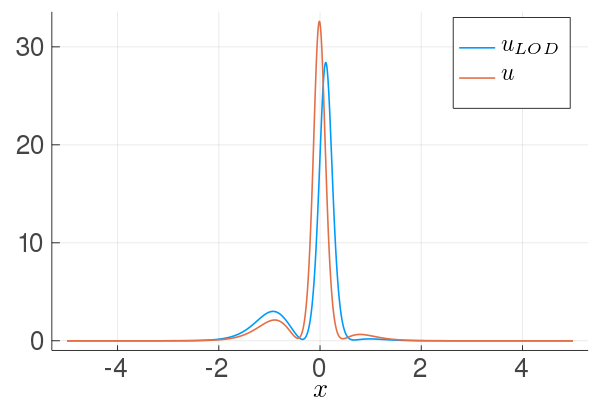}
        \caption{$|u|^2$}
    \end{subfigure}
\caption{Converged solution $u\LOD$ w.r.t. $\tau$ at $T=200$ of CN-FEM LOD using $ H = 40/2^{10} ,\ \ell = 10$ and $N=2^{23}$ time steps. We have $E[u\LOD]=-47.99992458 $, $c_1 =0.0088 $.  The relative $L^2$ and $H^1$ errors are   $\|u\LOD-u\|/\|u\|=0.663$ and $\|\nabla(u-u\LOD)\|/\|\nabla u\LOD\|= 0.718$. The required CPU time was 29h.
} \label{1024}
\end{figure}

\begin{figure}[H]
    \centering
    \begin{subfigure}[b]{0.33\textwidth}
        \includegraphics[width=\textwidth]{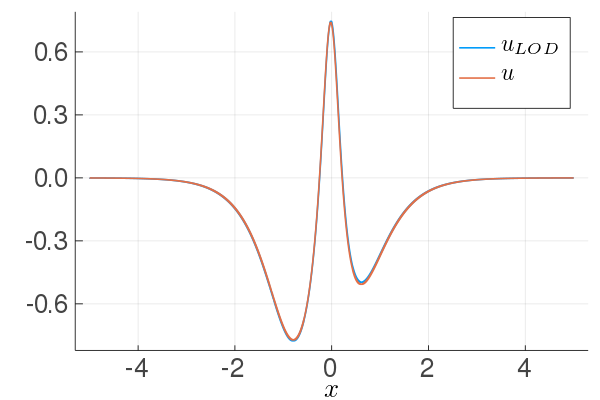}
        \caption{$\Re u$}
    \end{subfigure}\hspace{-10pt}
    \begin{subfigure}[b]{0.33\textwidth}
        \includegraphics[width=\textwidth]{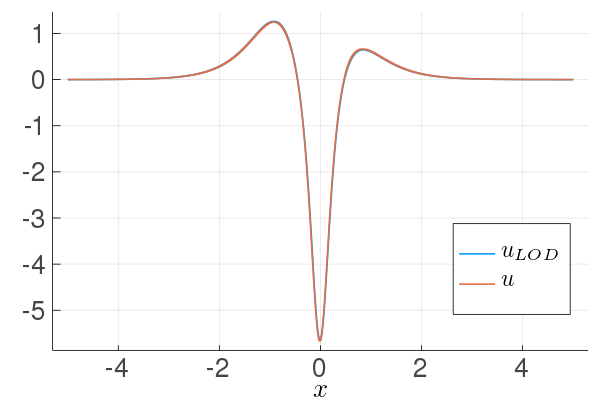}
        \caption{$\Im u$}
    \end{subfigure}\hspace{-10pt}
    \begin{subfigure}[b]{0.33\textwidth}
        \includegraphics[width=\textwidth]{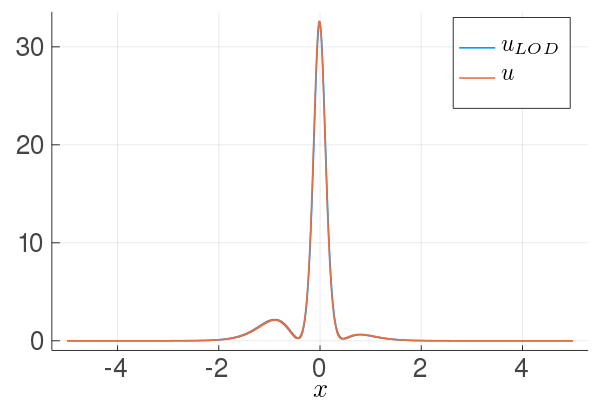}
        \caption{$|u|^2$}
    \end{subfigure}
\caption{Converged solution $u\LOD$ w.r.t. $\tau$ at $T=200$ of CN-FEM LOD using $ H = 40/2^{11} ,\ \ell = 12$ and $N=2^{24}$ time steps. We have $E[u\LOD]=-47.99999898 $, $c_1 =0.0010 $.  The relative $L^2$ and $H^1$ errors are $\|u\LOD-u\|/\|u\|=0.032$ and $\|\nabla(u-u\LOD)\|/\|\nabla u\LOD\| = 0.037$. The required CPU time was 100h.} \label{2048} 
\end{figure}

\section{Numerical experiments in 2D}
\label{section-numerics-2D}
In the previous example, the translational invariance of the mesh was used to reduce the number of local problems to a handful. To further illustrate the competitiveness of the proposed method we consider a two dimensional problem where the local problems are solved in parallel. Given sufficiently many parallel processes there is no need to split the potential as proposed in eq.  \eqref{def-a-innerproduct}. In this problem we seek $u(x,t)$ with
\begin{gather}  \label{Harmonic2D}
\begin{cases}
\ci \partial_t u  &= -\frac{1}{2} \Delta u + V u  + 5\pi|u|^2u   \qquad \mbox{in } \Omega \times (0,T], \\
u(\cdot,t)  &=  0     \qquad\hspace{102pt} \mbox{on } \partial \Omega \times (0,T],  \\
u(\cdot,0)  &= \sqrt{\frac{2}{\pi}}e^{-(x^2+y^2)} \qquad\hspace{47pt} \mbox{in }  \Omega.
\end{cases}
\end{gather}
Here, $\Omega = (-6,6)^2$ is the computational domain and we have an {\it anisotropic} harmonic trapping potential $V(x,y)=  \frac{1}{2}(x^2+(2y)^2)$. The energy is, up to machine precision, $E[u^0]=33/8$, likewise the mass is $M[u^0] = 1$. For the maximum time we selected $T = 2$.
The inner product $a(\cdot,\cdot)$, in the LOD is choosen as, 
\begin{align*}
	a(v,w) = \int_{\D}\frac{1}{2}\nabla v\cdot\overline{\nabla w}+Vv\overline{w}\ dx .
\end{align*}
\subsection{Convergence of $\uLOD^0$}
In Table \ref{2D_Error_Table} are tabulated the initial errors of $\uLOD^0$ for different values of $H$ and $\ell$. These values are subsequently plotted in Fig. \ref{2D_Error_Graph} versus $H$. From Fig. \ref{Energy_Mass_2D}, the 6th order convergence of mass and energy becomes apparent. The 4th order convergence in the $L^2$-norm and the 3rd order convergence in the $H^1$-seminorm are illustrated in Fig. \ref{L2_H1_2D}. In passing we note that the small kink in both convergence plots at $H = 0.21875$ is due to insufficient $\ell$. The characteristic length of the fine mesh is $h=1/128 = 0.0078125$, corresponding to roughly 2.7 million degrees of freedom. Remarkably the LOD-space reaches the accuracy of the fine grid already for $H=0.1875$, which corresponds to a mere $4565$ degrees of freedom. 
\begin{table}[H] 
	\begin{center}
		\begin{tabular}{l r|c| c|c | c|}
			\multicolumn{6}{c}{ }\\
		&	& $|E\LOD[\uLOD^0]-E[u^0]|$  & $|M[\uLOD^0]-M[u^0]|$ & $\|u^0-\uLOD^0\|$  & $\|\nabla( u^0-\uLOD^0)\| $  \\ \hline
			$H=0.5$& $\ell=2$ &0.044919 &0.006855 & 0.008919  & 0.109768     \\ \hline 
			$H=0.25$& $\ell=4$ & 0.000923 &  0.000132  & 0.000509  &  0.014307 \\ \hline
			$H = 0.21875$ & $\ell = 4$ & 0.000704& 0.000099 & 0.000364 & 0.012054 \\ \hline 
			$H=0.1875$ & $\ell=5$ &0.000188 &0.000023 & 0.000102  & 0.003711   \\ \hline 
			$H = 0.125$ & $\ell =5 $ & 0.000172& 0.000022 & 0.000053 & 0.003183 \\ \hline
		\end{tabular}
		\caption{Initial errors of the $a$-orthogonal projection of $u^0$ onto the LOD-space for the 2D model problem.}\label{2D_Error_Table}
	\end{center}	 
\end{table}

\begin{figure}[H]
	\begin{subfigure}[t]{0.45\textwidth}
		\includegraphics[scale=0.5]{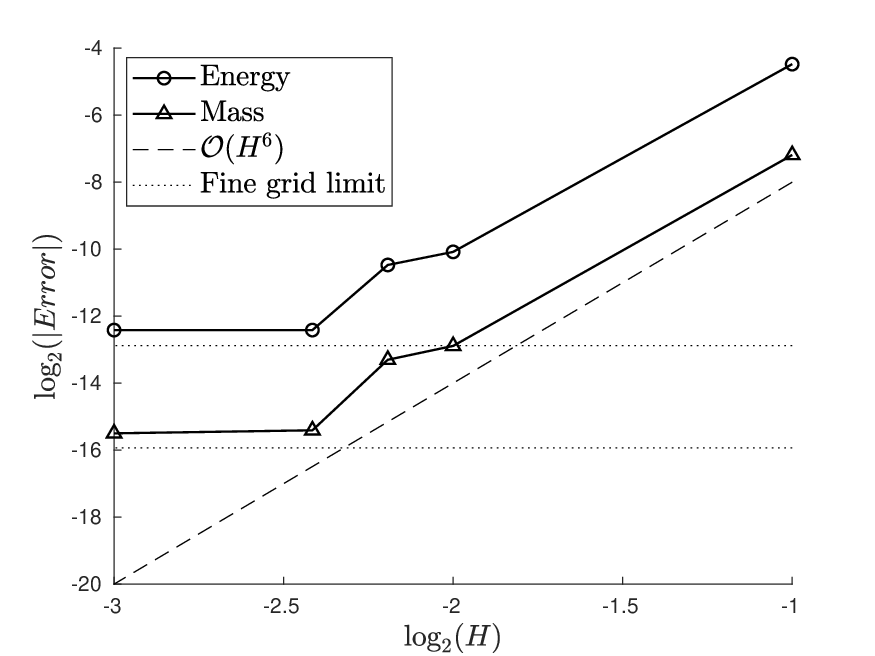} 
		\caption{6th order convergence of mass and of energy.}\label{Energy_Mass_2D}
	\end{subfigure}
	~
	\begin{subfigure}[t]{0.45\textwidth}
		\centering
		\includegraphics[scale=0.5]{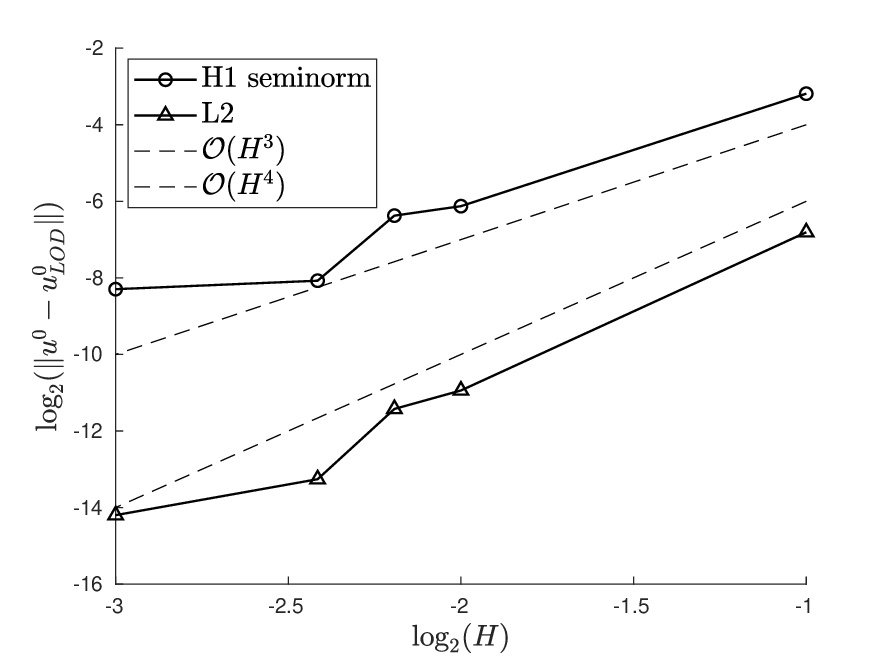}
		\caption{4th order convergence in $L^2$-norm and 3rd order convergence in $H^1$-seminorm.  }\label{L2_H1_2D}
	\end{subfigure}%
	\caption{Graph of values in Table \ref{2D_Error_Table} illustrating the superconvergence for the 2D model problem.}\label{2D_Error_Graph}
\end{figure}

\subsection{$L^{\infty}(L^2)$- and $L^{\infty}(H^1)$-convergence rates for $T=2$}
The final time is set to  $T= 2$ and the number of time steps is set to $N = 2^{13}$ to isolate the influence of $H$. As no analytic solution is known and since computing the solution on the fine mesh is infeasible we take as reference solution the LOD-solution with parameters $H=0.125,\ \ell = 5$. Surprisingly the order of convergence is one order higher than predicted, namely, the $\uLOD$ solution converges with 5th order in the $L^2$-norm and with 4th order in the $H^1$-seminorm. This is shown in Fig. \ref{L2_H1_T2}. The reason for these high rates could be related to the fact that the reference solution 
was an LOD-solution, however, this requires further investigation in the future. The density of the reference solution at $T=2$ is shown in Fig. \ref{Density_T2}.
\begin{figure}[H]
	\begin{subfigure}[t]{0.45\textwidth}
		\includegraphics[scale=0.5]{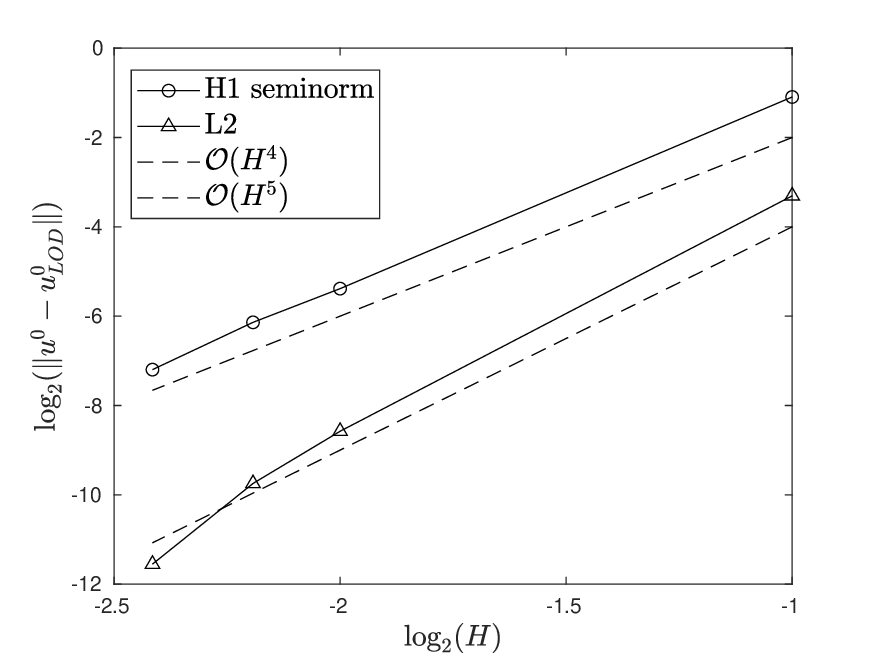} 
		\caption{Convergence rates of the solution at time $T=2$ versus $H$. The parameter $\ell$ is according to Table \ref{2D_Error_Table}.}\label{L2_H1_T2}
	\end{subfigure}
	~
	\begin{subfigure}[t]{0.45\textwidth}
		\centering
		\includegraphics[scale=0.5]{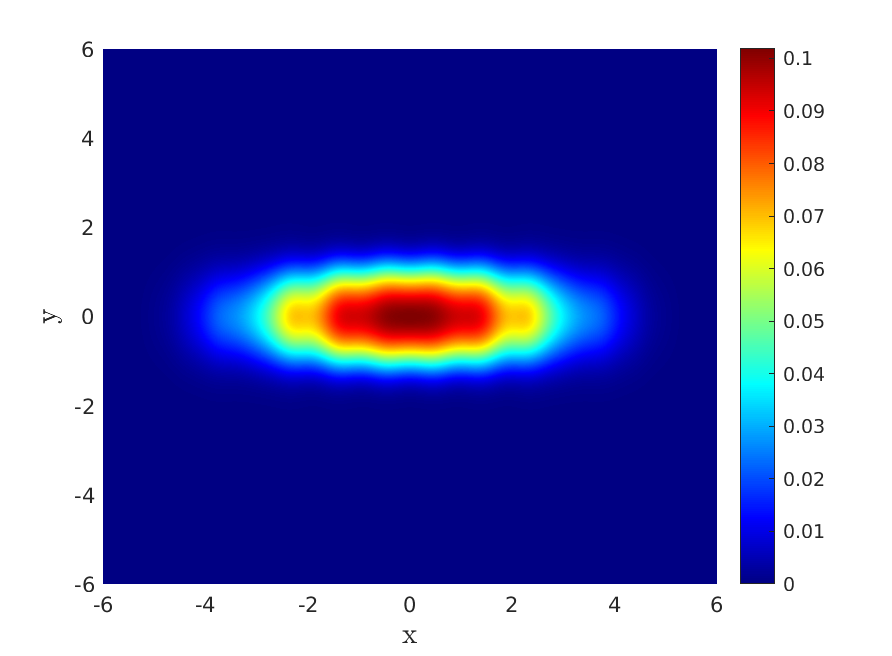}
		\caption{Density plot of reference solution $|\uLOD|^2$ at time $T=2$. }\label{Density_T2}
	\end{subfigure}%
	\caption{Convergence rates and density plot of the solution to initial value problem \eqref{Harmonic2D} in the LOD-space using the modified Crank-Nicolson method.  }\label{2D_T2}
\end{figure}
\subsection{CPU times}
One time step in the LOD-space with $H=0.1875$ and $ \ell = 5$, using 4 fixed point iterations requires 5 seconds on a single processor  on an Intel(R) Xeon(R) CPU E5-2637 v3 @ 3.50GHz unit. However as the assembly of the nonlinear term is embarrassingly parallel we find that this can be reduced to 1.2 seconds using the full 16 parallel processes of the very same computer. Consequently the solution at $T=2$ with discretization parameters $H=0.1875$ and $\ell=5$ was computed in about 3 hours. In comparison, one single time step using the fine mesh discretization and the very same fixed point iteration with similar tolerance, required 100 seconds. As in the previous example, the stopping criterion in the fixed point iteration was set to $\| u^{n+1}_{\text{\tiny{LOD}},i+1}- u^{n+1}_{\text{\tiny{LOD}},i}\| \leq 10^{-10}$. The precomputation of the LOD-space required roughly 13h on a 4 x 12 cores Intel E7-8857v2 Ivy Bridge unit. The tensor $\omega_{ijk}$ added another 6 hours to the precomputation. We note here for future improvement that the local problems should be amenable to being solved on a GPU.

\section{Proofs - Analysis of the modified Crank--Nicolson scheme}
\label{section-proof-main-result}

In this section we prove the main result stated in Theorem \ref{main-theorem-modified-CN}. We split the proof into several lemmas and start with the well-posedness. 

\begin{lemma}[existence of solutions to the modified Crank--Nicolson method]
\label{existence-modified-CN-LOD}
Assume \ref{A1}-\ref{A3}. Then for any $n\ge 1$ there exists at least one solution $u^{n}\LOD \in \VLOD$ to the modified Crank--Nicolson scheme \eqref{FullyDiscrete}. 
\end{lemma}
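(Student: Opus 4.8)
The plan is to argue by induction on $n$. Since $u^0\LOD\in\VLOD$ is given by \eqref{LOD-approx-initial-value}, it suffices to show: if $u^n\LOD\in\VLOD$ is available, then the nonlinear equation \eqref{FullyDiscrete} admits at least one solution $u^{n+1}\LOD\in\VLOD$. As $\VLOD$ is finite-dimensional, I would deduce this from the standard topological consequence of Brouwer's fixed point theorem: if $(X,(\cdot,\cdot))$ is a finite-dimensional real inner product space, $g\colon X\to X$ is continuous, and there is an $R>0$ with $(g(x),x)>0$ for all $x$ with $\|x\|=R$, then $g$ has a zero in the closed ball of radius $R$. (Otherwise $x\mapsto -R\,g(x)/\|g(x)\|$ would be a continuous retraction of the ball onto its boundary, whose Brouwer fixed point would contradict the sign assumption.)

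To bring \eqref{FullyDiscrete} into this form I would use the substitution $w:=u^{n+1/2}\LOD=\tfrac12(u^{n+1}\LOD+u^n\LOD)$, i.e.\ $u^{n+1}\LOD=2w-u^n\LOD$, so that \eqref{FullyDiscrete} turns into a single equation for $w\in\VLOD$. Viewing $\VLOD$ as a real inner product space with $(v_1,v_2):=\Re\langle v_1,v_2\rangle$, I would define $g\colon\VLOD\to\VLOD$ as the element representing, via $(\cdot,\cdot)$, the real-linear functional obtained by multiplying the residual of \eqref{FullyDiscrete} (evaluated at the substituted unknown) by $-\ci$; equivalently, $(g(w),v)$ is the imaginary part of $\ci\tfrac{2}{\tau}\langle w-u^n\LOD,v\rangle-\langle\nabla w,\nabla v\rangle-\langle Vw,v\rangle-\tfrac{\beta}{2}\langle\rho\,w,v\rangle$ with $\rho:=P\LOD(|2w-u^n\LOD|^2+|u^n\LOD|^2)$. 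One checks that $g(w)=0$ is equivalent to $w$ solving the scheme (testing additionally with $\ci v$ recovers the real part), and $g$ is continuous because $P\LOD$ is linear, $v\mapsto|v|^2$ is continuous, and all products and $L^2$-pairings of finite element functions are continuous on the finite-dimensional space $\VLOD$.

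The decisive step is to compute $(g(w),w)$, which amounts to pairing the scheme with $\ci\,u^{n+1/2}\LOD$ and taking real parts, i.e.\ to the discrete mass balance. In this pairing the Laplacian term $\langle\nabla w,\nabla w\rangle=\|\nabla w\|^2$, the potential term $\langle Vw,w\rangle=\int_\D V|w|^2\,dx$ (real since $V\ge0$ is real) and the nonlinear term $\langle\tfrac{\rho}{2}w,w\rangle=\tfrac12\int_\D\rho\,|w|^2\,dx$ are all real, hence contribute nothing. For the nonlinear term this uses that $\rho$ is real-valued, which holds because $\VLOD$ is spanned by real-valued functions — the LOD correctors of the real $P1$ nodal basis functions are real, since $a(\cdot,\cdot)$ maps pairs of real functions to reals and the detail space $W=\ker(P_H)$ is closed under complex conjugation — so $P\LOD$ maps real-valued functions to real-valued functions. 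What remains is
$$
(g(w),w)=\frac{2}{\tau}\Big(\|w\|^2-\Re\langle u^n\LOD,w\rangle\Big)\ \ge\ \frac{2}{\tau}\,\|w\|\,\big(\|w\|-\|u^n\LOD\|\big),
$$
which is strictly positive on the sphere $\|w\|=R$ whenever $R>\|u^n\LOD\|$. The Brouwer corollary then yields $w^\ast\in\VLOD$ with $g(w^\ast)=0$, and $u^{n+1}\LOD:=2w^\ast-u^n\LOD\in\VLOD$ solves \eqref{FullyDiscrete}; no smallness condition on $\tau$ is needed, and only existence (not uniqueness) is obtained.

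The only genuinely delicate point is that one cannot extract a sign from the \emph{real} part of the scheme paired with $u^{n+1/2}\LOD$: the coefficient $\rho$ is real but need not be nonnegative (it is an $L^2$-projection of a nonnegative function), so the cubic term has no definite sign and no energy-type a priori bound is available at this stage. The remedy is to use the imaginary part (the mass balance), which annihilates the nonlinearity entirely and leaves a coercive quadratic expression; the small observation that $P\LOD$ preserves real-valuedness is exactly what makes this annihilation rigorous. Everything else is routine, and the very same argument also proves Lemma \ref{lemma-existence-solutions-classiccal-CN}.
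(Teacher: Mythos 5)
Your proof is correct, but it takes a genuinely different route from the paper's. The paper keeps $u^{n+1}\LOD$ as the unknown and pairs the equation with it, taking real parts; there the nonlinearity leaves a cross term of the form $\tfrac{\beta \ci}{4}\langle \chi_M\circ P\LOD(\cdots)\,u^n\LOD, z_\alpha\rangle$ which, since $P\LOD$ of a nonnegative function may change sign and grows quadratically in the unknown, cannot be absorbed without first truncating the projected density. The paper therefore introduces the cut-off $\chi_M$, proves existence of truncated solutions via the Browder lemma, establishes the mass identity $\|u^{n,(M)}\LOD\|=\|u^n\LOD\|$, and passes to the limit $M\to\infty$ by compactness in the finite-dimensional space. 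Your substitution $w=u^{n+1/2}\LOD$ combined with the imaginary-part (mass-balance) pairing makes the nonlinear contribution $\tfrac{\beta}{2}\int_\D \rho\,|w|^2\,dx$ with $\rho$ real-valued, so it vanishes identically from $(g(w),w)$ regardless of the sign of $\rho$ --- no truncation and no limit passage are needed, and the same argument indeed covers the classical scheme of Lemma \ref{lemma-existence-solutions-classiccal-CN} as well. The one fact both proofs hinge on is that $P\LOD$ maps real-valued functions to real-valued functions; the paper asserts this, and your justification via the conjugation-invariance of $W=\ker(P_H)$ (equivalently, the existence of a real basis of $\VLOD$) is sound. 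The only thing the paper's heavier route buys is that the truncated problem \eqref{TruncatedLOD} and the Browder machinery are reused verbatim in the error analysis of Lemma \ref{lemma-op-L2-est}, where the truncation genuinely cannot be avoided; as a standalone existence proof, yours is shorter and cleaner. (Minor quibble: the map $x\mapsto -R\,g(x)/\|g(x)\|$ is not a retraction of the ball onto its boundary, merely a continuous self-map of the ball with image in the sphere; the Brouwer fixed-point contradiction you describe is nevertheless the standard and correct argument.)
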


\begin{proof}
In the following we let $N_H$ denote the dimension of $V\LOD$ and a corresponding basis of the $V\LOD$ space shall be given by the set $\{ \phi_{\ell} \hspace{2pt} | 1 \le \ell \le N_H \}$. By $\cdot$ we denote the Euclidean inner product on $\mathbb{C}^{N_H}$. We note that the following proof does not exploit the structure of $V\LOD$ and works for any finite dimensional space.

We seek $u^{n+1}\LOD \in V\LOD$ given by \eqref{FullyDiscrete}. By multiplying the defining equation with the complex number $\ci$ we have
\begin{eqnarray}
\label{CN-LOD-2-algebraic}\nonumber\lefteqn{ 0 = \tau^{-1} \langle u^{n+1}\LOD  ,  \phi_{\ell} \rangle 
- \tau^{-1} \langle u^n\LOD  ,  \phi_{\ell} \rangle
 \hspace{2pt}
+ \ci \langle \nabla u^{n+\tfrac{1}{2}}\LOD , \nabla  \phi_{\ell} \rangle
+ \hspace{2pt} \ci \langle V u^{n+\tfrac{1}{2}}\LOD ,   \phi_{\ell} \rangle
 }\\
&\enspace& \qquad+ \hspace{2pt} \ci \beta \left\langle \frac{P\LOD(|u^{n+1}\LOD|^2+|u^{n}\LOD|^2)}{2}u^{n+1/2}\LOD ,  \phi_{\ell}\right\rangle
\hspace{140pt}
\end{eqnarray}
for all $ \phi_{\ell}$. Since we cannot guarantee that $P\LOD(|u^{n+1}\LOD|^2+|u^{n}\LOD|^2) \ge 0$, we consider a truncated auxiliary problem (note here the difference to the existence proof given in the appendix Lemma \ref{lemma-existence-solutions-classiccal-CN}). For the auxiliary problem let $M\in \mathbb{N}$ denote a truncation parameter and let $\chi_M : \R \rightarrow [-M,M]$ denote the continuous truncation function
$\chi_M(t):=\min\{ \tfrac{M}{|t|} , 1 \} \hspace{2pt}t$.
With this, we seek $u^{n,(M)}\LOD \in V\LOD$ as the solution to the truncated equation
\begin{eqnarray}
\label{CN-LOD-2-algebraic-truncated}\nonumber\lefteqn{ 0 = \tfrac{1}{\tau} \langle u^{n,(M)}\LOD  ,  \phi_{\ell} \rangle 
- \tfrac{1}{\tau} \langle u^n\LOD  ,  \phi_{\ell} \rangle
 \hspace{2pt}
+ \tfrac{\ci}{2} \langle \nabla u^{n,(M)}\LOD + \nabla u^{n}\LOD , \nabla  \phi_{\ell} \rangle
+ \hspace{2pt} \tfrac{\ci}{2} \langle V (u^{n,(M)}\LOD +  u^{n}\LOD) ,   \phi_{\ell} \rangle
 }\\
&\enspace& \qquad+ \hspace{2pt} \ci \tfrac{\beta}{4} \left\langle \chi_M( P\LOD(|u^{n,(M)}\LOD|^2+|u^{n}\LOD|^2)) (u^{n,(M)}\LOD +  u^{n}\LOD),  \phi_{\ell}\right\rangle.
\hspace{100pt}
\end{eqnarray}
for all $ \phi_{\ell}$. We start with proving the existence of $u^{n,(M)}\LOD \in V\LOD$, where we assume inductively that $u^{n}\LOD$ exists. The goal is to show the existence of $u^{n,(M)}\LOD \in V\LOD$ by using a variation of the  Browder fixed-point theorem, which says  that if $g : \C^{N_H} \rightarrow \C^{N_H}$ is a continuous function and if there exists a $K>0$ such that $\Re(g(\boldsymbol{\alpha}) \cdot \boldsymbol{\alpha}) >0 $ for all $\boldsymbol{\alpha}$ with $|\boldsymbol{\alpha}|=K$, then there exists a zero $\boldsymbol{\alpha}_0$ of $g$ with $|\boldsymbol{\alpha}_0| < K$ (cf. \cite[Lemma 4]{Browder1965}). 

To apply this result, we define the function $g^{(M)} : \C^{N_H} \rightarrow \C^{N_H}$ for $\boldsymbol{\alpha}\in \C^{N_H}$ through
\begin{eqnarray*}
\lefteqn{g_\ell^{(M)}(\boldsymbol{\alpha}) := \frac{1}{\tau} \sum_{m=1}^{N_H} \boldsymbol{\alpha}_m \langle  \phi_m , \phi_\ell \rangle 
 +
\frac{\ci}{2} \sum_{m=1}^{N_H} \boldsymbol{\alpha}_m \hspace{2pt} 
\langle \nabla \phi_m , \nabla \phi_\ell \rangle
 +
\frac{\ci}{2} \sum_{m=1}^{N_H} \boldsymbol{\alpha}_m \hspace{2pt} 
\langle V \phi_m  , \phi_\ell \rangle
}\\
&\enspace&
+ \frac{\beta \ci}{4} \langle \hspace{2pt}\chi_M \circ P\LOD\left(  \left| \sum_{m=1}^{N_H} \boldsymbol{\alpha}_m \phi_m  \right|^2 + |u^{n}\LOD|^2 \right)
\left( \sum_{m=1}^{N_H} \boldsymbol{\alpha}_m \phi_m \hspace{2pt} + u^{n}\LOD
\right)
 ,\phi_\ell \rangle
 +F_\ell,
\end{eqnarray*}
where $F\in \C^{N_H}$ is defined by
\begin{align*}
F_\ell := \frac{\ci}{2} \langle \nabla u^{n}\LOD ,  \nabla \phi_\ell \rangle
+ \frac{\ci}{2} \langle V u^{n}\LOD ,  \phi_\ell \rangle
 - \frac{1}{\tau} \langle u^{n}\LOD  , \phi_\ell \rangle.
\end{align*}
To show existence of some $\boldsymbol{\alpha}_0$ with $g^{(M)}(\boldsymbol{\alpha}_0)=0$ we need to show there is a sufficiently large $K \in \R_{>0}$ such that $\Re( g^{(M)}(\boldsymbol{\alpha}) \cdot \boldsymbol{\alpha} ) > 0$ for all $\boldsymbol{\alpha} \in \C^{N_H}$ with $|\boldsymbol{\alpha}|= K$.  For brevity, we denote $z_\alpha:=\sum_{m=1}^{N_H} \boldsymbol{\alpha}_m \phi_m$ and obtain
\begin{eqnarray*}
\lefteqn{\Re( g^{(M)}(\boldsymbol{\alpha}) \cdot \boldsymbol{\alpha} ) }\\
&=& \tfrac{1}{\tau} \|  z_\alpha \|^2  +
\Re\left( 
\frac{\beta \ci}{4} \langle \hspace{2pt}\chi_M \circ P\LOD\left(  |  z_\alpha |^2 + |u^{n}\LOD|^2 \right)
u\LOD^{n} , z_\alpha \rangle \right) \\
&\enspace& \qquad +  \Re\left( 
\frac{\ci}{2} \langle \nabla u^{n}\LOD ,  \nabla z_\alpha \rangle
+ \frac{\ci}{2} \langle V u^{n}\LOD ,  z_\alpha \rangle
 - \frac{1}{\tau} \langle u^{n}\LOD  , z_\alpha\rangle
\right).
\end{eqnarray*}
With the boundedness for $\chi_M$ and the Young inequality we have for the second term
\begin{align*}
\left| \Re\left( 
\frac{\beta \ci}{4} \langle \hspace{2pt}\chi_M \circ P\LOD\left(  |  z_\alpha |^2 + |u^{n}\LOD|^2 \right)
u\LOD^{n} , z_\alpha \rangle \right) \right| &\le \frac{\beta M}{4} \| z_\alpha \| \hspace{3pt} \| u^{n}\LOD \|\\
 &\le 
\frac{1}{8 \tau} \| z_\alpha \|^2 \hspace{3pt} + \tau \frac{\beta^2 M^2}{8} \| u^{n}\LOD \|^2.
\end{align*}
Similarly, we have
 \begin{align*}
 \left|  \frac{\ci}{2} \langle V u^{n}\LOD ,  z_\alpha \rangle \right| 
&\le \frac{1}{8\tau} \| z_\alpha \|^2 + \frac{\tau}{2} \hspace{2pt}\| V \|_{L^{\infty}(\D)}^2 \|  u^{n}\LOD \|^2;\\
 \left| \frac{1}{\tau} \langle u^{n}\LOD  , z_\alpha\rangle \right| &\le \frac{1}{8\tau} \| z_\alpha \|^2 + 
 \frac{2}{\tau} \|  u^{n}\LOD \|^2 \quad \mbox{and}\\
 \left| \frac{\ci}{2} \langle \nabla u^{n}\LOD ,  \nabla z_\alpha \rangle \right| &\le 
  \frac{1}{8\tau} \| z_\alpha \|^2 + 
\frac{\tau}{2} C\LOD^2 \| \nabla u^{n}\LOD \|^2,
 \end{align*}
where $C\LOD$ is the norm equivalence constant in the (finite-dimensional) LOD space, i.e., $C\LOD>0$ is the optimal constant such that $\| \nabla v \| \le C\LOD \| v \|$ for all $v \in V\LOD$. Combining the previous estimates, we have
\begin{align*}
\Re( g^{(M)}(\boldsymbol{\alpha}) \cdot \boldsymbol{\alpha} )  \ge   \frac{1}{2\tau} \| z_\alpha \|^2 - \tilde{C}
\end{align*}
where $\tilde{C}= \tau \frac{\beta^2 M^2}{8} \| u^{n}\LOD \|^2 +  \frac{\tau}{2} \hspace{2pt}\| V \|_{L^{\infty}(\D)}^2 \|  u^{n}\LOD \|^2 + \frac{2}{\tau} \|  u^{n}\LOD \|^2 + \frac{\tau}{2} C\LOD^2 \| \nabla u^{n}\LOD \|^2$. Hence, for every sufficiently large $\boldsymbol{\alpha}$ with $\| z_\alpha \|^2 > 2 \tau \tilde{C}$ we have positivity of $\Re( g^{(M)}(\boldsymbol{\alpha}) \cdot \boldsymbol{\alpha} )$ and consequently the existence of a point $\boldsymbol{\alpha}_0$ with $g^{(M)}(\boldsymbol{\alpha}_0)=0$, which in turn implies the existence of $u^{n,(M)} \in V\LOD$.

Now that we have verified the existence of truncated solutions we easily observe by testing in \eqref{CN-LOD-2-algebraic-truncated} with $u^{n,(M)} + u^{n}\LOD$ and taking the real part that 
$$
\| u^{n,(M)}\LOD \| = \| u^{n}\LOD \| \qquad \mbox{for all } M\ge 0.
$$
Note that we used here that $P\LOD(v)$ is real if $v$ is real, which is essential for this argument. Since $V\LOD$ is a finite-dimensional space and all norms are equivalent, this means that $\{ u^{n,(M)} \}_{M \in \mathbb{N}} \subset V\LOD$ is a bounded sequence. Consequently, we can extract a subsequence (for simplicity still denoted by $\{ u^{n,(M)}\LOD \}_{M \in \mathbb{N}}$) that converges strongly to some limit $u^{n,(\infty)}\LOD \in V\LOD$. Note that this also implies that the subsequence is uniformly bounded in $L^{\infty}(\D)$. Hence by passing to the limit $M\rightarrow \infty $ in \eqref{CN-LOD-2-algebraic-truncated} we have
\begin{eqnarray*}
\nonumber\lefteqn{ 0 = \tfrac{1}{\tau} \langle u^{n,(\infty)}\LOD  ,  \phi_{\ell} \rangle 
- \tfrac{1}{\tau} \langle u^n\LOD  ,  \phi_{\ell} \rangle
 \hspace{2pt}
+ \tfrac{\ci}{2} \langle \nabla u^{n,(\infty)}\LOD + \nabla u^{n}\LOD , \nabla  \phi_{\ell} \rangle
+ \hspace{2pt} \tfrac{\ci}{2} \langle V (u^{n,(\infty)}\LOD +  u^{n}\LOD) ,   \phi_{\ell} \rangle
 }\\
&\enspace& \qquad+ \hspace{2pt} \ci \tfrac{\beta}{4} \left\langle P\LOD(|u^{n,(\infty)}\LOD|^2+|u^{n}\LOD|^2) (u^{n,(\infty)}\LOD +  u^{n}\LOD),  \phi_{\ell}\right\rangle,
\hspace{240pt}
\end{eqnarray*}
where we can set $u^{n+1}\LOD=u^{n,(\infty)}\LOD$, which finishes the existence proof.
\end{proof}

Next, we prove the conservation of the mass and the modified energy.

\begin{lemma}[conservation properties]
Assume \ref{A1}-\ref{A3}. Then we have $M[u^{n}\LOD] = M[u^{0}\LOD]$ and
$E\LOD[u^{n}\LOD] = E\LOD[u^{0}\LOD]$.
\end{lemma}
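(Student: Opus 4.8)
The plan is to run the standard mass/energy-conservation argument for conservative Crank--Nicolson schemes, i.e., to test the defining equation \eqref{FullyDiscrete} with suitably chosen functions from $\VLOD$ and separate real and imaginary parts. The only non-routine point is to recognize that the modified nonlinear term has been designed precisely so that the telescoping of the scheme produces the \emph{modified} energy $E\LOD$ rather than the physical energy $E$.

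For the mass, I would test \eqref{FullyDiscrete} with $v = u^{n+1/2}\LOD \in \VLOD$. The key observation is that the three terms on the right-hand side are then all real: the stiffness term $\langle \nabla u^{n+1/2}\LOD, \nabla u^{n+1/2}\LOD\rangle$; the potential term $\langle V u^{n+1/2}\LOD, u^{n+1/2}\LOD\rangle$, since $V$ is real-valued; and the nonlinear term, because $P\LOD$ maps real functions to real functions (as already used in the proof of Lemma \ref{existence-modified-CN-LOD}), so that $P\LOD(|u^{n+1}\LOD|^2+|u^n\LOD|^2)$ is a real function. Taking imaginary parts therefore annihilates the whole right-hand side, while on the left $\Im\big(\ci\langle D_\tau u^n\LOD, u^{n+1/2}\LOD\rangle\big) = \tfrac{1}{2\tau}\big(\|u^{n+1}\LOD\|^2 - \|u^n\LOD\|^2\big)$ by the elementary identity $\Re\langle x+y,x-y\rangle = \|x\|^2-\|y\|^2$. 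Hence $M[u^{n+1}\LOD] = M[u^n\LOD]$, and induction gives $M[u^n\LOD] = M[u^0\LOD]$.

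For the energy I would test \eqref{FullyDiscrete} with $v = D_\tau u^n\LOD \in \VLOD$, so that the left-hand side $\ci\|D_\tau u^n\LOD\|^2$ is purely imaginary and taking real parts yields $0$ on the left. On the right, the first two terms telescope in the usual way (again using that $V$ is real), contributing $\tfrac{1}{2\tau}\big(\|\nabla u^{n+1}\LOD\|^2 - \|\nabla u^n\LOD\|^2\big)$ and $\tfrac{1}{2\tau}\big(\int_\D V|u^{n+1}\LOD|^2 - \int_\D V|u^n\LOD|^2\big)$. Writing $\rho := P\LOD(|u^{n+1}\LOD|^2+|u^n\LOD|^2)$, which is real and lies in $\VLOD$, the same computation applied to the nonlinear term gives the real contribution $\tfrac{\beta}{4\tau}\int_\D \rho\,(|u^{n+1}\LOD|^2 - |u^n\LOD|^2)$. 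The decisive step is then to use that $P\LOD$ is the self-adjoint $L^2$-projection onto $\VLOD$ and $\rho\in\VLOD$, so that $\int_\D \rho\,(|u^{n+1}\LOD|^2 - |u^n\LOD|^2) = \big\langle \rho, P\LOD(|u^{n+1}\LOD|^2 - |u^n\LOD|^2)\big\rangle$; by linearity of $P\LOD$ this equals $\langle a+b,a-b\rangle = \|a\|^2 - \|b\|^2$ with $a:=P\LOD(|u^{n+1}\LOD|^2)$ and $b:=P\LOD(|u^n\LOD|^2)$. Collecting the three real contributions and multiplying by $2\tau$ shows that the vanishing of the real part is exactly $E\LOD[u^{n+1}\LOD] = E\LOD[u^n\LOD]$ for $E\LOD[v] = \int_\D |\nabla v|^2 + V|v|^2 + \tfrac{\beta}{2}|P\LOD(|v|^2)|^2\,dx$; induction concludes.

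The argument is essentially routine, and the only real obstacle is the bookkeeping with $P\LOD$ in the nonlinear term: one must notice that the modified nonlinearity $P\LOD(|u^{n+1}\LOD|^2+|u^n\LOD|^2)\,u^{n+1/2}\LOD$ was constructed precisely so that, after testing with $D_\tau u^n\LOD$, self-adjointness and linearity of the projection collapse $\int_\D \rho\,(|u^{n+1}\LOD|^2-|u^n\LOD|^2)$ into $\|P\LOD(|u^{n+1}\LOD|^2)\|^2 - \|P\LOD(|u^n\LOD|^2)\|^2$. Reality of $V$, reality of $P\LOD$ on real inputs, and the elementary bilinear identities are all standard.
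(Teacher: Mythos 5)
Your proposal is correct and follows essentially the same route as the paper: test with $u^{n+1/2}\LOD$ and take imaginary parts for the mass, test with a multiple of $u^{n+1}\LOD-u^n\LOD$ and take real parts for the energy, and in the nonlinear term use that $P\LOD$ is the self-adjoint $L^2$-projection together with its linearity to collapse $\int_\D \rho\,(|u^{n+1}\LOD|^2-|u^n\LOD|^2)\,dx$ into $\|P\LOD(|u^{n+1}\LOD|^2)\|^2-\|P\LOD(|u^n\LOD|^2)\|^2$. The only cosmetic difference is your choice of $D_\tau u^n\LOD$ rather than $u^{n+1}\LOD-u^n\LOD$ as the test function, which changes nothing.
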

\begin{proof}
Since $P\LOD(v)$ is real for any real function $v\in H^1_0(\D)$, the mass conservation follows readily from testing with $v=u^{n+1/2}\LOD$ in  \eqref{FullyDiscrete} and taking the imaginary part.

To verify conservation of the modified energy, we take the test function $v=u^{n+1}\LOD-u^n\LOD$ and consider the real part:
	\begin{align*}
	0 = &  \int_{\D} |\nabla u^{n+1}\LOD|^2-|\nabla u^n\LOD|^2 + V(|u^{n+1}\LOD|^2-|u^n\LOD|^2)
	\\ 
	&\quad +  \frac{\beta}{2}  P\LOD(|u^{n+1}\LOD|^2 + |u^{n}\LOD|^2) \left( |u^{n+1}\LOD|^2 - |u^{n}\LOD|^2\right) \hspace{2pt} dx 
	\end{align*}
	By definition of $P\LOD$ we have 
	\begin{eqnarray*}
	\lefteqn{\int_{\D} P\LOD(|u^{n+1}\LOD|^2 + |u^{n}\LOD|^2) \left( |u^{n+1}\LOD|^2 - |u^{n}\LOD|^2\right) \hspace{2pt} dx}\\ 
	&=& \int_{\D} P\LOD(|u^{n+1}\LOD|^2 + |u^{n}\LOD|^2) \hspace{3pt} P\LOD( |u^{n+1}\LOD|^2 - |u^{n}\LOD|^2) \hspace{2pt} dx
	\end{eqnarray*}
	and consequently by linearity of $P\LOD$
	 \begin{align*}
	0 = \int_{\D} |\nabla u^{n+1}\LOD|^2-|\nabla u^n\LOD|^2 + V(|u^{n+1}|^2-|u^n|^2)+ \frac{\beta}{2}\bigg( P\LOD(|u^{n+1}\LOD|^2)^2 - P\LOD(|u^n\LOD|^2)^2  \bigg)dx.
	\end{align*}
\end{proof}
Before we can prove the error estimate for the difference between the exact energies, i.e., $E[u^{n}\LOD]$ and $E[u(t_n)]$, we first require an $L^2$-error estimate for the error $u^n\LOD-u(t_n)$. This is done in several steps. Our approach is to show a $\tau$-independent convergence result for $u^n\LOD-u^n$, where $u^n$ denotes the solution of the semi-discrete Crank--Nicolson scheme in $H^1_0(\D)$, i.e., we split $u^n\LOD-u(t_n)=(u^n\LOD-u^n)+(u^n-u(t_n))$. Crucial for the proof is thus the following semi-discrete auxiliary problem whose properties have been studied in \cite{NonlinearCN} and \cite{H1Est}.
\begin{lemma}[semi-discrete Crank--Nicolson scheme]
\label{lemma-semidiscrete-CN}
Assume \ref{A1}-\ref{A6} and let $u^0$ denote the usual initial value. If $\tau$ is sufficiently small (bounded by a small constant that depends on $u$, $u_0$, $T$, $V$ and $\beta$), then for every $n\ge 0 $ there exists a solution $u^{n+1}\in H^1_0(\D)$ to the semi-discrete Crank--Nicolson equation
\begin{eqnarray}
\label{semi-disc-cnd-problem}
 \ci \langle \frac{u^{n+1} - u^n}{\tau} ,v\rangle
 = \langle \nabla u^{n+1/2},\nabla v\rangle
+ \langle V \hspace{1pt} u^{n+1/2},v\rangle + \beta \langle
\frac{|u^{n+1}|^2+ |u^n|^2}{2} u^{n+1/2},v\rangle
\end{eqnarray}
for all $v\in H^1_0(\D)$. 

Furthermore, we have $u^n \in H^2(\D)$ and there is unique family of solutions $u^n$ (family w.r.t. to $\tau$) so that it holds the a priori error estimate
\begin{align}
\label{error-est-un-H1}
\sup_{0\le n \le N} \left( \|u(\cdot , t_n ) - u^n\|_{H^1(D)}  +  \tau \| u(\cdot , t_n ) - u^n \|_{H^2(\D)}  \right)
\lesssim \tau^2,
\end{align}
where the hidden constant depends on the exact solution $u$ to problem \eqref{model-problem} and the maximum time $T$, but not on $\tau$. In the following we use the silent convention that $u^n$ always refers to the uniquely characterized solution that fulfills \eqref{error-est-un-H1}.
\end{lemma}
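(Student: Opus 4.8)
The semi-discrete scheme \eqref{semi-disc-cnd-problem} together with the claimed existence, regularity and error bound is exactly the content of \cite{NonlinearCN,H1Est}, so the plan is to invoke those references for the complete argument and here only to describe its structure. The first step is existence: for a given $u^n\in H^1_0(\D)$ one proves that \eqref{semi-disc-cnd-problem} admits a solution $u^{n+1}$ by a Browder fixed-point argument on a Galerkin subspace of $H^1_0(\D)$, exactly parallel to the proof of Lemma \ref{existence-modified-CN-LOD}, followed by a compactness passage to the limit in the Galerkin parameter; the a priori bound needed for this is the discrete mass conservation $\|u^{n+1}\|=\|u^n\|$, obtained by testing with $v=u^{n+1/2}:=\tfrac12(u^{n+1}+u^n)$ and taking imaginary parts. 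The $H^2$-regularity then follows inductively from \ref{A5}: if $u^n\in H^2(\D)$, the strong form
$$
-\triangle u^{n+1/2} = \ci\,\tfrac{u^{n+1}-u^n}{\tau} - V u^{n+1/2} - \beta\,\tfrac{|u^{n+1}|^2+|u^n|^2}{2}\,u^{n+1/2}
$$
has a right-hand side in $L^2(\D)$ by $H^1_0(\D)\hookrightarrow L^6(\D)$ and \ref{A4}, so elliptic regularity on the convex domain $\D$ gives $u^{n+1/2}\in H^2(\D)$ and hence $u^{n+1}=2u^{n+1/2}-u^n\in H^2(\D)$.

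Next comes the consistency analysis. Inserting the exact solution $u$ into \eqref{semi-disc-cnd-problem} produces a defect $d^n$, and a Taylor expansion around $t_{n+1/2}$ shows that the three error sources — $\tfrac{u(t_{n+1})-u(t_n)}{\tau}-\partial_t u(t_{n+1/2})$, $\tfrac12(u(t_{n+1})+u(t_n))-u(t_{n+1/2})$, and the cubic remainder $\tfrac{|u(t_{n+1})|^2+|u(t_n)|^2}{2}u(t_{n+1/2})-|u(t_{n+1/2})|^2u(t_{n+1/2})$ — are integral remainders involving $\partial_t^{(k)}u$ with $k\le 3$. Hence the regularity \ref{A6}, together with a Sobolev embedding into $L^\infty(\D)$ to treat the products in the cubic term, gives $\tau^{-1/2}\|d^n\|\lesssim\|\partial_t^{(3)}u\|_{L^2(t_n,t_{n+1};L^2(\D))}$, so that the accumulated defect $\tau\sum_{m\le n}\|d^m\|$ stays of order $\tau^2$.

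The core of the proof is the error estimate. Writing $e^n:=u(t_n)-u^n$, so $e^0=0$, and subtracting \eqref{semi-disc-cnd-problem} from the consistency identity gives
$$
\ci\,\big\langle\tfrac{e^{n+1}-e^n}{\tau},v\big\rangle = \langle\nabla e^{n+1/2},\nabla v\rangle + \langle V e^{n+1/2},v\rangle + \beta\langle N^n,v\rangle + \langle d^n,v\rangle,
$$
where, by the algebraic splitting of the cubic difference already used in Lemma \ref{lemma-estimate-particle-interactions}, $N^n$ is a finite sum of products each carrying exactly one factor $e^{n+1}$ or $e^n$ and otherwise only quadratic factors built from $u(t_k)$ and $u^k$. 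Testing with $v=e^{n+1/2}$ and taking imaginary parts controls $\|e^{n+1}\|^2-\|e^n\|^2$; testing with $v=\tfrac{e^{n+1}-e^n}{\tau}$ and taking real parts controls the increment of $\|\nabla e^{n}\|^2+\int_{\D}V|e^n|^2\,dx$; the contributions of $N^n$ and $d^n$ are absorbed via Cauchy--Schwarz and Young. I expect the main obstacle to be that the coefficients of $N^n$ contain $\|u^k\|_{L^\infty(\D)}$, which is not a priori bounded uniformly in $k$ and $\tau$; this circular dependence has to be broken by an induction on $n$ in which one assumes $\sup_{k\le n}\|e^k\|_{H^1(\D)}\le C_\star\tau^2\le 1$ (and, in space dimension $2$ or $3$, additionally that $\|e^k\|_{H^2(\D)}$ remains bounded), deduces from the $H^2$-regularity and Sobolev embedding that $\|u^k\|_{L^\infty(\D)}\le\sup_{t\in[0,T]}\|u(t)\|_{L^\infty(\D)}+1$ for $k\le n$, whereupon $N^n$ is effectively linear in $(e^{n+1},e^n)$ with bounded coefficient, and a discrete Gronwall inequality — with $\tau_0$ chosen small enough to absorb the zeroth-order terms into the left-hand side — closes the induction and yields $\sup_n\|e^n\|_{H^1(\D)}\lesssim\tau^2$.

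It remains to upgrade to the $H^2$-estimate and to establish uniqueness. For the first, one reads off $\triangle e^{n+1/2}$ from the strong form of the error equation, $\|\triangle e^{n+1/2}\|\lesssim\|\tfrac{e^{n+1}-e^n}{\tau}\|+\|e^{n+1/2}\|_{H^1(\D)}+\|d^n\|$, where $\|\tfrac{e^{n+1}-e^n}{\tau}\|\lesssim\tau$ follows from the already established $L^2$-part of the error bound (it is in fact smaller); elliptic regularity on $\D$ then gives $\|e^{n+1/2}\|_{H^2(\D)}\lesssim\tau$, and the midpoint bound is transferred to the nodes via $e^n=e^{n+1/2}-\tfrac{\tau}{2}\tfrac{e^{n+1}-e^n}{\tau}$ together with a control of $\|\tfrac{e^{n+1}-e^n}{\tau}\|_{H^2(\D)}$ that is obtained by applying the difference operator to the error equation — which is where \ref{A6} with $k=3$ enters. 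Uniqueness of the family satisfying \eqref{error-est-un-H1} then follows by applying the same testing-and-Gronwall machinery to the difference $u^n-\tilde u^n$ of two such families: the defect vanishes and both solutions have bounded $L^\infty$-norms (being $O(\tau^2)$-close to $u$), which forces $u^n=\tilde u^n$; cf. \cite[Lemma 3.1]{NonlinearCN}.
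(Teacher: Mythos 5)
Your proposal handles this lemma the same way the paper does: the result is not proved in the paper at all but imported from \cite{NonlinearCN} (existence, $H^2$-regularity, the $L^\infty(L^2)$ and $L^\infty(H^2)$ estimates, and uniqueness of the regular family) and \cite{H1Est} (the optimal $L^\infty(H^1)$ estimate), and your citation of exactly those two references is the intended argument. Your supplementary sketch of the cited proofs is broadly faithful; the one place it understates the difficulty is the $H^1$ step, where the term $\langle N^n, \tfrac{e^{n+1}-e^n}{\tau}\rangle$ cannot simply be absorbed by Cauchy--Schwarz and Young because the difference quotient of the error is not a priori controlled --- one must substitute it back from the error equation itself, which is the main point of \cite{H1Est} and the same device the paper uses later in its own Lemma \ref{optimal-H2-est-lemma} --- but since you defer the complete argument to the references this is not a gap in your proposal.
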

A proof of the $L^\infty(H^2)$ and $L^\infty(L^2)$ estimates is given in \cite{NonlinearCN}, a proof of the $L^\infty(H^1)$ estimate is given in \cite{H1Est}. As we will see later, the $L^\infty(H^2)$-estimate is not optimal and can be improved by one order. This improvement is one of the pillars of our error analysis in the LOD space. In fact, the $L^\infty(H^2)$-rates provided in Lemma \ref{lemma-semidiscrete-CN} are not sufficient to prove super convergence of $\mathcal{O}(H^4)$ for the final method.

Before we can derive the improved $L^\infty(H^2)$-estimates, we first need to investigate the regularity of $u^{n}$ in more detail and derive uniform and $\tau$-independent bounds for $\| \triangle u^n \|_{H^2}$. Note that with the availability of such bounds, we may apply the general theory of Section \ref{section-LOD} to conclude that $u^n$ is well-approximated in the LOD space, i.e., $\|u^n-A\LOD(u^n)\|\leq CH^4\|-\triangle u^n + V_1 u^n\|_{H^2}$, where $A\LOD$ is the Galerkin-projection on $V\LOD$. 

The next lemma takes the first step into that direction by showing that $u^n$ inherits regularity from the initial value and that $\|-\triangle u^n + V_1 u^n\|_{H^2}$ is bounded independent of the step size $\tau$.
\begin{lemma}\label{timedsc-regularity}
Assume \ref{A1}-\ref{splitting-potential}
and recall that \ref{A5} guarantees $u^0\in H^1_0(\D)\cap H^4(\D)$ and $\triangle u^0 \in H^1_0(\D) \cap H^2(\D)$. Furthermore, $u^n$ denotes the solution to the semi-discrete method \eqref{semi-disc-cnd-problem}. Then $\triangle u^n \in H^1_0(\D) \cap H^2(\D)$ and there exists a $\tau$-independent constant $C$ so that 
$$
\| D_{\tau} u^{n}  \|_{H^2(\D)} + \| \triangle u^{n} \|_{H^2(\D)} \le C
$$
for all $n\ge 0$.
\end{lemma}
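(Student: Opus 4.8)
\emph{Plan.} The proof goes by induction on $n$, the base case $n=0$ being assumption \ref{A5}. Throughout write $w^k:=\triangle u^k$ and $g^{k+1/2}:=\tfrac12(|u^{k+1}|^2+|u^k|^2)$, and recall that Lemma \ref{lemma-semidiscrete-CN} already gives, for $\tau\le\tau_0$, the $\tau$-uniform bound $\|u^n\|_{H^2(\D)}\le\|u(\cdot,t_n)\|_{H^2(\D)}+C\tau\lesssim1$, hence also $\|u^n\|_{L^\infty(\D)}\lesssim1$ by $H^2(\D)\hookrightarrow L^\infty(\D)$ for $d\le3$. The first half of the claim is obtained by comparing $D_\tau u^n$ with $\partial_t u$: one writes
\begin{equation*}
D_\tau u^n=\tfrac1\tau\!\int_{t_n}^{t_{n+1}}\!\partial_t u(\cdot,s)\,ds+\tfrac1\tau\big[(u^{n+1}-u(\cdot,t_{n+1}))-(u^n-u(\cdot,t_n))\big],
\end{equation*}
where the first term is bounded in $H^2(\D)$ by $\|\partial_t u\|_{L^\infty(0,T;H^2(\D))}$, which is finite because \ref{A6} gives $\partial_t u,\partial_t^2u\in L^2(0,T;H^2(\D))$, hence $\partial_t u\in C([0,T];H^2(\D))$; the second term is bounded in $H^2(\D)$ by $\tfrac2\tau\sup_k\|u(\cdot,t_k)-u^k\|_{H^2(\D)}\lesssim2$ by the semi-discrete $H^2$-estimate in \eqref{error-est-un-H1}. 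Thus $\|D_\tau u^n\|_{H^2(\D)}\lesssim1$ for all $n$, and in particular $\|D_\tau w^n\|=\|\triangle D_\tau u^n\|\lesssim1$.

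\emph{Regularity propagation.} Since $u^{n+1/2}\in H^2(\D)$ on the convex domain $\D$, testing \eqref{semi-disc-cnd-problem} with $v\in H^1_0(\D)$ and integrating by parts shows that $-\triangle u^{n+1/2}=-\ci D_\tau u^n+Vu^{n+1/2}+\beta g^{n+1/2}u^{n+1/2}$ in $L^2(\D)$. By the first part $D_\tau u^n\in H^2(\D)$, and since $V\in H^2(\D)$, $|u^k|^2\in H^2(\D)$ and $H^2(\D)$ is a multiplication algebra for $d\le3$, the right-hand side lies in $H^2(\D)$ with a $\tau$-uniform bound; it also lies in $H^1_0(\D)$ because $D_\tau u^n,u^{n+1/2}\in H^1_0(\D)$ and multiplication by the continuous functions $V$ and $g^{n+1/2}$ preserves the vanishing trace. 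Elliptic regularity on $\D$ then gives $u^{n+1/2}\in H^4(\D)$ and $\triangle u^{n+1/2}\in H^1_0(\D)\cap H^2(\D)$; combined with $w^{n+1}=2\triangle u^{n+1/2}-w^n$ and the induction hypothesis ($w^n\in H^1_0(\D)\cap H^2(\D)$, starting from $w^0=\triangle u^0$ by \ref{A5}) this yields $w^{n+1}\in H^1_0(\D)\cap H^2(\D)$.

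\emph{The $\tau$-uniform $H^2$-bound, and the main obstacle.} The recursion $w^{n+1}=2\triangle u^{n+1/2}-w^n$ loses a power of $\tau$ when iterated over $\mathcal{O}(\tau^{-1})$ steps, so the uniform $H^2$-bound on $w^n$ has to be extracted from the $L^2$-stable Crank--Nicolson energy identities. Applying $-\triangle$ to the strong form above shows that $w^n$ solves the \emph{linear} Crank--Nicolson--Schr\"odinger equation $\ci\langle D_\tau w^n,v\rangle=\langle\nabla w^{n+1/2},\nabla v\rangle+\langle Vw^{n+1/2},v\rangle+\langle\mathcal R^{n+1/2},v\rangle$ for $v\in H^1_0(\D)$, with remainder $\mathcal R^{n+1/2}=\big(\triangle(Vu^{n+1/2})-V\triangle u^{n+1/2}\big)+\beta\triangle(g^{n+1/2}u^{n+1/2})$. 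Using $\|u^k\|_{H^2(\D)}\lesssim1$, $\|u^k\|_{L^\infty(\D)}\lesssim1$, the embedding $H^1(\D)\hookrightarrow L^4(\D)$ and $V\in H^2(\D)$, one checks $\|\mathcal R^{n+1/2}\|\le C(1+\|w^{n+1}\|+\|w^n\|)$. Testing with $v=w^{n+1/2}$ and taking the imaginary part makes the stiff terms drop out and gives $\tfrac1{2\tau}(\|w^{n+1}\|^2-\|w^n\|^2)\le\|\mathcal R^{n+1/2}\|\,\|w^{n+1/2}\|$, so a discrete Gr\"onwall argument yields $\|w^n\|\lesssim1$ (this $L^2$-bound in fact also follows directly by telescoping $\|w^n\|\le\|\triangle u^0\|+\tau\sum_{k<n}\|D_\tau w^k\|$); thereafter $\|\mathcal R^{n+1/2}\|\lesssim1$. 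Testing next with $v=D_\tau w^n$ and taking the real part produces the discrete energy identity $\tfrac1{2\tau}(\mathcal E^{n+1}-\mathcal E^n)=-\Re\langle\mathcal R^{n+1/2},D_\tau w^n\rangle$ with $\mathcal E^k:=\|\nabla w^k\|^2+\langle Vw^k,w^k\rangle\ge\|\nabla w^k\|^2$; the crucial point is that $\mathcal R^{n+1/2}$ appears \emph{undifferentiated} and is paired with $D_\tau w^n=\triangle D_\tau u^n$, which is $L^2$-bounded by the first part, so the right-hand side is $\mathcal O(1)$ and summation gives $\|\nabla w^n\|^2\le\mathcal E^n\le\mathcal E^0+CT\lesssim1$, i.e. $\|w^n\|_{H^1(\D)}\lesssim1$. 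The genuinely delicate step is the final lift to $\|w^n\|_{H^2(\D)}\lesssim1$: one differentiates the $w^n$-equation once more and runs the analogous energy bootstrap for $\partial_i w^n$, but now the remainder carries the commutators $(\partial_iV)w^{n+1/2}$ and $\partial_i\mathcal R^{n+1/2}$, and since $V$ is only in $H^2(\D)$ these must be handled by integrating by parts so that no more than one derivative ever falls on $V$ and every right-hand side is still estimated in $L^2(\D)$ only — this bookkeeping, together with keeping all constants independent of $\tau$, is where essentially all the work of the lemma sits.
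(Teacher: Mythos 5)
Your Steps 1--3 are sound and run essentially parallel to the paper's own Steps 1--3: the paper phrases them through powers of the operator $\mathcal{H}:=-\triangle+V$ rather than through your commutator remainder $\mathcal{R}^{n+1/2}$, and it obtains the uniform bound on $\|\triangle u^n\|_{H^1(\D)}$ by taking the \emph{imaginary} part of the identity tested against $\nabla\mathcal{H}u^{n+1/2}$, whereas you take the \emph{real} part against $D_\tau w^n$; both close because the half-step bound $\|\triangle u^{n+1/2}\|_{H^2(\D)}\lesssim 1$ and the bound $\|\triangle D_\tau u^n\|\lesssim 1$ are available. The problem is the last assertion of the lemma, $\|\triangle u^n\|_{H^2(\D)}\le C$: you only announce it, explicitly conceding that ``essentially all the work of the lemma sits'' there, and the route you sketch does not close. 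Differentiating the $w^n$-equation once and running ``the analogous energy bootstrap for $\partial_i w^n$'' means pairing $\partial_i\mathcal{R}^{n+1/2}$ (and $(\partial_iV)w^{n+1/2}$) with $D_\tau\partial_i w^n=\partial_i\triangle D_\tau u^n$; to make the right-hand side $\mathcal{O}(1)$ this way you would need a $\tau$-uniform bound on $\|D_\tau u^n\|_{H^3(\D)}$, i.e.\ an $H^3$-error estimate of order $\mathcal{O}(\tau)$ for $u(t_n)-u^n$, which is not available from \eqref{error-est-un-H1} (only the $H^2$-error is $\mathcal{O}(\tau)$). The imaginary-part variant at the same level only reproduces the $L^2$-bound on $\partial_i w^n$ that you already have.

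The paper avoids this obstruction by going up two derivatives at once: it applies $\mathcal{H}^2$ to the half-step equation, tests with $\mathcal{H}^2u^{n+1/2}$, and takes the imaginary part, so that the telescoping quantity is $\|\mathcal{H}^2u^{n}\|^2$ and the nonlinear remainder is paired with $\mathcal{H}^2u^{n+1/2}$ rather than with a time difference; the resulting right-hand side is then controlled by $1+\|\mathcal{H}^2u^n\|^2+\|\mathcal{H}^2u^{n+1}\|^2$ and closed by Gr\"onwall. The price is the explicit expansion of $\triangle^2\bigl(\tfrac{|u^{n+1}|^2+|u^n|^2}{2}u^{n+1/2}\bigr)$ and the term-by-term estimates using the already-established bounds $\|\triangle u^{n+1/2}\|_{H^2(\D)}\lesssim 1$ and $\|\triangle u^n\|_{H^1(\D)}\lesssim 1$ -- this is the bulk of the paper's proof and is precisely the part missing from your proposal. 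As it stands, your argument proves the $D_\tau$-bound, the qualitative regularity $\triangle u^n\in H^1_0(\D)\cap H^2(\D)$, and $\|\triangle u^n\|_{H^1(\D)}\lesssim 1$, but not the claimed uniform $H^2$-bound on $\triangle u^n$.
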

\begin{proof}
The proof is established in several steps. For brevity, we denote in the following $\mathcal{H} u:= - \triangle u + V u$.

$\\$
{\it Step 1: We show that $D_{\tau} u^{n} \in H^1_0(\D) \cap H^2(\D)$ and $\| D_{\tau} u^{n}  \|_{H^2(\D)} \lesssim 1$}.

We already know that $u^n,u^{n+1}\in H^1_0(\D) \cap H^2(\D)$. It is hence obvious that $D_\tau u^n \in H^1_0(\D) \cap H^2(\D)$. With Lemma \ref{lemma-semidiscrete-CN} we have
\begin{align*}
\|  D_\tau u^n \|_{H^2(\D)} &= \tau^{-1} \| (u^{n+1} - u(t^{n+1})) + (u(t^{n}) - u^{n}) + (u(t^{n+1}) - u(t^{n})) \|_{H^2(\D)} \\
&\lesssim 1 + \tau^{-1}  \| u(t^{n+1}) - u(t^{n}) \|_{H^2(\D)} \le 1 + \| \partial_t u \|_{L^{\infty}(0,T;H^2(\D))}.
\end{align*}
{\it Step 2: We show that $\triangle u^{n+1/2} \in H^1_0(\D) \cap H^2(\D)$ and $\| \triangle u^{n+1/2} \|_{H^2(\D)} \lesssim 1$}.

We start from \eqref{semi-disc-cnd-problem} and observe that $u^{n+1/2} \in H^1_0(\D)$ can be characterized as the solution to
\begin{align}
\label{def-eqn-u-n-half}
 \langle \mathcal{H} u^{n+1/2}, v\rangle  =  \langle f^{n+1/2}, v\rangle \qquad \mbox{for all } v \in H^1_0(\D)
\end{align}
and where
$$
f^{n+1/2} := -\beta \frac{|u^{n+1}|^2+ |u^n|^2}{2} u^{n+1/2} + \ci D_\tau u^n.
$$
From {\it Step 1}, we already know that $ D_\tau u^n$ has the desired regularity and uniform bounds. It remains to check the nonlinear term, where a quick calculation shows that the second derivative of $\frac{|u^{n+1}|^2+ |u^n|^2}{2} u^{n+1/2} $ can be bounded by the $H^2$-norm of $u^n$ and $u^{n+1}$, which itself is bounded independent of $\tau$ according to Lemma \ref{lemma-semidiscrete-CN}. For example, we have
$$
\| |u^n|^2 u^{n}  \|_{H^2(\D)} \lesssim \| u^n \|_{H^2(\D)} \| u^n \|_{L^4(\D)}^2 + \| u^n \|_{L^{\infty}(\D)} \| u^n \|_{W^{1,4}(\D)}^2 
\lesssim \| u^n \|_{H^2(\D)}^3 \lesssim 1.
$$
Collecting the estimates hence guarantees $f^{n+1/2} \in H^1_0(\D) \cap H^2(\D)$ with $\| f^{n+1/2}\|_{H^2(\D)} \lesssim 1$. We conclude
$$
\| \triangle  u^{n+1/2} \|_{H^2(\D)} \le 
\| V  u^{n+1/2} \|_{H^2(\D)}  + \| f^{n+1/2} \|_{H^2(\D)} \lesssim 1,
$$
where we used assumption \ref{A4} and the Sobolev embedding $H^1(\mathcal{D}) \hookrightarrow L^6(\mathcal{D})$ for bounded Lipschitz domains to bound $V \hspace{1pt} u^{n+1/2} \in H^1_0(\D) \cap H^2(\D)$ uniformly and independent of $\tau$.

$\\$
{\it Step 3: We show that $\triangle u^{n} \in H^1_0(\D) \cap H^2(\D)$ and $\| \triangle u^{n} \|_{H^1(\D)} \lesssim C$}.

In the previous step we saw that $\mathcal{H} u^{n+1/2}  \in H^1_0(\D) \cap H^2(\D)$. Recursively we conclude with the assumptions on the initial value that $\mathcal{H} u^{n+1} \in H^1_0(\D) \cap H^2(\D)$ and in particular $\triangle u^{n+1} \in H^1_0(\D) \cap H^2(\D)$. We can hence apply $\mathcal{H}$ to \eqref{def-eqn-u-n-half} to obtain
\begin{align*}
\mathcal{H}^2 u^{n+1/2} =  - \beta \mathcal{H} \left( \frac{|u^{n+1}|^2+ |u^n|^2}{2} u^{n+1/2} \right) + \ci \mathcal{H}(D_\tau  u^n).
\end{align*}
By exploiting that $\triangle u^{n} \in H^1_0(\D) \cap H^2(\D)$ and Sobolev embeddings we easily observe that $\mathcal{H}^2 u^{n+1/2}  \in H^1_0(\D) \cap H^2(\D)$. Iteratively we can conclude that $\mathcal{H}^2 u^{n} \in H^1_0(\D) \cap H^2(\D)$ (and $\mathcal{H}^3 u^{n} \in L^2(\D)$) for all $n\ge 0$. This implies
\begin{eqnarray}
\label{identity-H-Dtau-u-n}
\lefteqn{\ci \langle \nabla \mathcal{H}(D_\tau  u^n) , \nabla \mathcal{H} u^{n+1/2} \rangle}\\
\nonumber&=& \langle\nabla \mathcal{H}^2 u^{n+1/2} ,  \nabla \mathcal{H}  u^{n+1/2} \rangle +
\beta \langle  \nabla \mathcal{H} \left( \frac{|u^{n+1}|^2+ |u^n|^2}{2} u^{n+1/2} \right) ,  \nabla \mathcal{H}  u^{n+1/2} \rangle.
\end{eqnarray}
We have a closer look at the first term and observe
\begin{eqnarray*}
\lefteqn{ \langle\nabla \mathcal{H}^2 u^{n+1/2} ,  \nabla \mathcal{H}  u^{n+1/2} \rangle}\\
&=& - \langle\nabla \triangle \mathcal{H} u^{n+1/2} ,  \nabla \mathcal{H}  u^{n+1/2} \rangle
 + \langle  \mathcal{H} u^{n+1/2} \hspace{2pt} \nabla V ,  \nabla \mathcal{H}  u^{n+1/2} \rangle
+ \langle V \nabla \mathcal{H} u^{n+1/2}  ,  \nabla \mathcal{H}  u^{n+1/2} \rangle \\
&=& \langle \triangle \mathcal{H} u^{n+1/2} ,  \triangle \mathcal{H}  u^{n+1/2} \rangle
 + \langle  \mathcal{H} u^{n+1/2} \hspace{2pt} \nabla V ,  \nabla \mathcal{H}  u^{n+1/2} \rangle
+ \langle V \nabla \mathcal{H} u^{n+1/2}  ,  \nabla \mathcal{H}  u^{n+1/2} \rangle,
\end{eqnarray*}
where the last step exploited that $\triangle \mathcal{H} u^{n+1/2} \in H^1_0(\D)$.
Hence, by taking the imaginary part in \eqref{identity-H-Dtau-u-n} we obtain
\begin{eqnarray*}
\lefteqn{ \frac{ \| \nabla \mathcal{H} u^{n+1}\|^2 - \| \nabla \mathcal{H} u^{n}\|^2  }{2 \tau} }\\
&=& \Im  \langle  \mathcal{H} u^{n+1/2} \hspace{2pt} \nabla V ,  \nabla \mathcal{H}  u^{n+1/2} \rangle + 
\beta \Im \langle  \nabla \mathcal{H} \left( \frac{|u^{n+1}|^2+ |u^n|^2}{2} u^{n+1/2} \right) ,  \nabla \mathcal{H}  u^{n+1/2} \rangle \\
&\lesssim& \| u^{n}\|^2_{H^2(\D)} + \| u^{n+1}\|^2_{H^2(\D)} +   \| \nabla \mathcal{H} u^{n+1/2}\|^2 + \left|  \langle  \nabla \mathcal{H} \left( \frac{|u^{n+1}|^2+ |u^n|^2}{2} u^{n+1/2} \right) ,  \nabla \mathcal{H}  u^{n+1/2} \rangle \right| \\
&=& \| u^{n}\|^2_{H^2(\D)} + \| u^{n+1}\|^2_{H^2(\D)} +   \| \nabla \mathcal{H} u^{n+1/2}\|^2 + \left|  \langle  \mathcal{H} \left( \frac{|u^{n+1}|^2+ |u^n|^2}{2} u^{n+1/2} \right) ,  \triangle \mathcal{H}  u^{n+1/2} \rangle \right|.
\end{eqnarray*}
Since {\it Step 2} proved $\| \mathcal{H} u^{n+1/2} \|_{H^2(\D)} \lesssim 1$ and $\|  \tfrac{|u^{n+1}|^2+ |u^n|^2}{2} u^{n+1/2} \|_{H^2(\D)} \lesssim 1$ we conclude
\begin{align*}
 \| \nabla \mathcal{H} u^{n}\|^2  \le  \| \nabla \mathcal{H} u^{n-1}\|^2 + \tau 
 \le  \| \nabla \mathcal{H} u^{0}\|^2 +n \tau  \lesssim 1,
\end{align*}
which in turn implies $\|  \triangle u^{n}\|_{H^1(\D)} \lesssim 1$.

$\\$
{\it Step 4: We show that $\| \triangle u^{n} \|_{H^2(\D)} \lesssim C$}.

We apply $\mathcal{H}^2$ to \eqref{def-eqn-u-n-half} and multiply the equation with $\mathcal{H}^2 u^{n+1/2} \in H^1_0(\D) \cap H^2(\D)$ (cf. {\it Step 3}) to obtain
\begin{eqnarray*}
\lefteqn{\ci \langle  \mathcal{H}^2(D_\tau  u^n) , \mathcal{H}^2 u^{n+1/2} \rangle}\\
&=& \langle \mathcal{H}^3 u^{n+1/2} ,  \mathcal{H}^2  u^{n+1/2} \rangle +
\beta \langle  \mathcal{H}^2 \left( \tfrac{|u^{n+1}|^2+ |u^n|^2}{2} u^{n+1/2} \right) ,  \mathcal{H}^2  u^{n+1/2} \rangle \\
&=& \langle \nabla \mathcal{H}^2 u^{n+1/2} ,  \nabla \mathcal{H}^2  u^{n+1/2} \rangle 
+  \langle V \mathcal{H}^2 u^{n+1/2} ,  \mathcal{H}^2  u^{n+1/2} \rangle 
+ \beta \langle  \mathcal{H}^2 \left( \tfrac{|u^{n+1}|^2+ |u^n|^2}{2} u^{n+1/2} \right) ,  \mathcal{H}^2  u^{n+1/2} \rangle.
\end{eqnarray*}
Taking the imaginary part yields
\begin{eqnarray}
\label{Hsquare-un-id}
\lefteqn{ \frac{ \| \mathcal{H}^2 u^{n+1}\|^2 - \|  \mathcal{H}^2 u^{n}\|^2  }{2 \tau} }\\
\nonumber&=& - \beta \Im \langle \triangle \mathcal{H} \left( \tfrac{|u^{n+1}|^2+ |u^n|^2}{2} u^{n+1/2} \right) ,  \mathcal{H}^2  u^{n+1/2} \rangle +
\beta \Im \langle V \mathcal{H} \left( \tfrac{|u^{n+1}|^2+ |u^n|^2}{2} u^{n+1/2} \right) ,  \mathcal{H}^2  u^{n+1/2} \rangle
\end{eqnarray}
The second term can be bounded in the usual manner by
\begin{align}
\label{Hsquare-un-est-1}
\left|  \langle V \mathcal{H} \left( \tfrac{|u^{n+1}|^2+ |u^n|^2}{2} u^{n+1/2} \right) ,  \mathcal{H}^2  u^{n+1/2} \rangle \right| \lesssim  1 + \| \mathcal{H}^2 u^{n+1}\|^2 + \|  \mathcal{H}^2 u^{n}\|^2.
\end{align}
The first term needs a more careful investigation where we need to find a bound for the expression $\langle \triangle^2 \left( \tfrac{|u^{n+1}|^2+ |u^n|^2}{2} u^{n+1/2} \right) ,  \mathcal{H}^2  u^{n+1/2} \rangle $. For simplicity, letting $g^n:=\frac{|u^{n+1}|^2+ |u^n|^2}{2}$ we have
\begin{eqnarray*}
\lefteqn{\triangle^2 ( g^n u^{n+1/2} )}\\
&=& \triangle \left( g^n \triangle u^{n+1/2} + 2 \nabla u^{n+1/2} \cdot \nabla g^n + u^{n+1/2} \triangle g^n  \right) \\
 &=& 6 \triangle u^{n+1/2} \hspace{2pt} \triangle g^n
 + 4 \nabla \triangle u^{n+1/2} \cdot \nabla g^n  
 +   \triangle^2 u^{n+1/2} \hspace{2pt} g^n
 + 4 \nabla u^{n+1/2} \cdot \nabla \triangle g^n
 + u^{n+1/2} \triangle^2 g^n
\end{eqnarray*}
and the derivatives of $g^n$ can be computed with
\begin{align*}
\nabla |u^n|^2 &=2 \Re \left( u^n \overline{\nabla u^n} \right);
\hspace{50pt}
\triangle |u^n|^2 = 2 |\nabla u^n|^2 + 2 \Re \left( u^n \overline{\triangle u^n} \right);\\
\nabla \triangle |u^n|^2 &= 6 \Re\left( \nabla u^n \hspace{2pt} \overline{\triangle u^n} \right) + 2 \Re \left( u^n \overline{\nabla \triangle u^n} \right) \qquad \hspace{40pt}\mbox{and} \\
 \triangle^2 |u^n|^2 &=
  6 |\triangle u^n|^2 +
  8 \Re\left( \nabla u^n \hspace{2pt} \overline{\nabla \triangle u^n} \right) 
+ 2 \Re \left( u^n \overline{\triangle^2 u^n} \right).
\end{align*}
Consequently, we estimate the various terms with
\begin{align*}
\left| \langle \triangle u^{n+1/2} \hspace{2pt} \triangle g^n,  \mathcal{H}^2  u^{n+1/2}  \rangle \right|  &\le \|  \triangle u^{n+1/2}  \|_{L^{\infty}(\D)} \| \triangle g^n \|_{L^2(\D)} \| \mathcal{H}^2  u^{n+1/2} \|_{L^2(\D)} \\
&\lesssim  \|  \triangle u^{n+1/2}  \|_{H^2(\D)}  \left( \| u^n \|_{H^2(\D)}^2 
+  \| u^{n+1} \|_{H^2(\D)}^2
\right) \| \mathcal{H}^2  u^{n+1/2} \|_{L^2(\D)}  \\
&\lesssim \| \mathcal{H}^2  u^{n+1/2} \|_{L^2(\D)},
\end{align*} 
where we used the result of {\it Step 2} to bound $\|  \triangle u^{n+1/2}  \|_{H^2(\D)}$. Next, we have
\begin{align*}
\left| \langle \nabla \triangle u^{n+1/2} \cdot \nabla g^n ,  \mathcal{H}^2  u^{n+1/2}  \rangle \right|  &\le \|  \nabla \triangle u^{n+1/2} \|_{L^4(\D)}
\|  \nabla g^n \|_{L^4(\D)} \|  \mathcal{H}^2  u^{n+1/2}  \| \\
&\le \|  \triangle u^{n+1/2}  \|_{H^2(\D)}  \|  g^n \|_{H^2(\D)} \|  \mathcal{H}^2  u^{n+1/2}  \| .
\end{align*} 
This can be bounded as the previous term, since $\|  g^n \|_{H^2(\D)}  \lesssim \|  \triangle g^n \|_{L^2(\D)} $ for $g^n \in H^1_0(\D) \cap H^2(\D)$. Consequently, $\left| \langle \nabla \triangle u^{n+1/2} \cdot \nabla g^n ,  \mathcal{H}^2  u^{n+1/2}  \rangle \right| \lesssim \| \mathcal{H}^2  u^{n+1/2} \|$. In a similar fashion we can estimate
\begin{align*}
\left| \langle  \triangle^2 u^{n+1/2} \hspace{2pt} g^n ,  \mathcal{H}^2  u^{n+1/2}  \rangle \right|  &\le \|  \triangle u^{n+1/2} \|_{H^2(\D)} \| g^n \|_{L^{\infty}(\D)} \| \mathcal{H}^2  u^{n+1/2} \| \lesssim \| \mathcal{H}^2  u^{n+1/2} \|.
\end{align*} 
Next, we consider
\begin{eqnarray*}
\lefteqn{\left| \langle  \nabla u^{n+1/2} \cdot (6 \Re\left( \nabla u^n \hspace{2pt} \overline{\triangle u^n} \right) + 2 \Re \left( u^n \overline{\nabla \triangle u^n} \right)) ,  \mathcal{H}^2  u^{n+1/2}  \rangle \right|}\\ 
 &\lesssim& \|  \nabla u^{n+1/2} \|_{L^{6}(\D)}  \|  \nabla u^{n} \|_{L^{6}(\D)} 
 \| \triangle u^n \|_{L^6(\D)}  \|  \mathcal{H}^2  u^{n+1/2}\| \\
&\enspace& \quad + \|  \nabla u^{n+1/2} \|_{L^{4}(\D)}  \| u^n \|_{L^{\infty}(\D)} \| \nabla \triangle u^n \|_{L^{4}(\D)}
 \|  \mathcal{H}^2  u^{n+1/2}\| \\
&\lesssim&  \|  \mathcal{H}^2  u^{n+1/2}\| + \| \triangle u^n \|_{H^{2}(\D)}
 \|  \mathcal{H}^2  u^{n+1/2}\| \\
 &\lesssim& 1 +  \|  \mathcal{H}^2  u^{n}\|^2 +  \|  \mathcal{H}^2  u^{n+1}\|^2. 
\end{eqnarray*} 
Note that we  used here that $ \|  \nabla u^{n+1/2} \|_{L^{6}(\D)}  \lesssim  \|  u^{n+1/2} \|_{H^{2}(\D)} \lesssim 1$ by Lemma \ref{lemma-semidiscrete-CN} and that $ \|  \triangle u^{n} \|_{L^{6}(\D)}  \lesssim  \| \triangle u^{n} \|_{H^{1}(\D)} \lesssim 1$ by {\it Step 3}. We can conclude that
\begin{align*}
\left| \langle  \nabla u^{n+1/2} \cdot \nabla \triangle g^n ,  \mathcal{H}^2  u^{n+1/2}  \rangle \right|  &\lesssim  \|  \mathcal{H}^2  u^{n+1/2}\|.
\end{align*} 
It remains to check $ \langle  u^{n+1/2} \triangle^2 g^n ,  \mathcal{H}^2  u^{n+1/2}  \rangle$ where we have
\begin{eqnarray*}
\lefteqn{ \left| \langle  u^{n+1/2} \triangle^2 |u^n|^2 ,  \mathcal{H}^2  u^{n+1/2}  \rangle \right|}\\  
&\lesssim&
\left| \langle  u^{n+1/2}   |\triangle u^n|^2   ,  \mathcal{H}^2  u^{n+1/2}  \rangle \right|  
+ \left| \langle  u^{n+1/2}   \Re\left( \nabla u^n \hspace{2pt} \overline{\nabla \triangle u^n} \right)    ,  \mathcal{H}^2  u^{n+1/2}  \rangle \right|  \\
&\enspace& \quad+ \left| \langle  u^{n+1/2}   \Re \left( u^n \overline{\triangle^2 u^n} \right)     ,  \mathcal{H}^2  u^{n+1/2}  \rangle \right|  \\
&\lesssim& \left( \|  u^{n+1/2} \|_{L^{\infty}(\D)}   \| \triangle u^n \|_{L^4(\D)}^2 
+ \| u^{n+1/2} \|_{L^{\infty}(\D)}  \| \nabla u^n \|_{L^4(\D)} \| \nabla \triangle u^n \|_{L^4(\D)} \right)  \| \mathcal{H}^2  u^{n+1/2} \| \\
&\enspace& \quad +  \|  u^{n+1/2} \|_{L^{\infty}(\D)}  \|  u^{n} \|_{L^{\infty}(\D)}  \|  \triangle^2 u^{n} \| \hspace{3pt}   \| \mathcal{H}^2  u^{n+1/2} \| \\
&\lesssim&  (1 + \|  \mathcal{H}^2  u^{n}\| +  \|  \mathcal{H}^2  u^{n+1}\| ) ( \|  \mathcal{H}^2  u^{n}\| +  \|  \mathcal{H}^2  u^{n+1}\| ).
\end{eqnarray*} 
Again, we used {\it Step 3} when estimating $ \| \triangle u^n \|_{L^4(\D)}^2 \lesssim  \| \triangle u^n \|_{H^1(\D)}^2 \lesssim 1$. We conclude that
$$
\left|  \langle  u^{n+1/2} \triangle^2 g^n ,  \mathcal{H}^2  u^{n+1/2}  \rangle \right| \lesssim  1 + \|  \mathcal{H}^2  u^{n}\|^2 +  \|  \mathcal{H}^2  u^{n+1}\|^2.
$$
Collecting and combining all the estimates allows us to conclude that
\begin{align*}
\beta \left| \Im \langle \triangle \mathcal{H} \left( \tfrac{|u^{n+1}|^2+ |u^n|^2}{2} u^{n+1/2} \right) ,  \mathcal{H}^2  u^{n+1/2} \rangle \right| \lesssim 1 + \|  \mathcal{H}^2  u^{n}\|^2 +  \|  \mathcal{H}^2  u^{n+1}\|^2
\end{align*}
and hence with \eqref{Hsquare-un-id} and \eqref{Hsquare-un-est-1} we have
\begin{eqnarray*}
\| \mathcal{H}^2 u^{n+1}\|^2 - \|  \mathcal{H}^2 u^{n}\|^2
\lesssim \tau \left( 1 + \|  \mathcal{H}^2  u^{n}\|^2 +  \|  \mathcal{H}^2  u^{n+1}\|^2 \right).
\end{eqnarray*}
Gr\"onwall's inequality yields 
$$
\|  \mathcal{H}^2 u^{n}\| \lesssim 1
$$
the regularity estimate $\| \mathcal{H} u^{n}\|_{H^2(\D)}  \lesssim \|  \mathcal{H}^2 u^{n}\|  $ for $ \mathcal{H} u^{n} \in H^1_0(\D) \cap H^2(\D)$ finishes the proof of the last step.
\end{proof}

\begin{lemma}\label{error_pde-lemma}
Assume \ref{A1}-\ref{A6} and denote by $e^n_\CN$ the error of the semi-discrete Crank--Nicolson method \eqref{semi-disc-cnd-problem}, i.e., $e^n_\CN= u^n-u(t_n)$. The error fulfills the identity 
\begin{equation} \label{error_pde}
\ci D_\tau e^n_\CN+\triangle e^{n+1/2}_\CN-Ve^{n+1/2}_\CN-e^n_{\beta,\text{\tiny CN}} = T^n,
\end{equation}
where $e^{n+1/2}_\CN:= \tfrac{e^{n+1}_{\CN}+e^n_{\CN}}{2}$; the consistency error is
\begin{eqnarray}\label{Taylor}
\lefteqn{T^n := \ci \hspace{2pt}( \tfrac{u(t_{n+1}) - u(t_n)}{\tau} - \partial_t u(t_{n+1/2}))+(\triangle - V)(  \tfrac{u(t_{n+1}) + u(t_n)}{2}-u(t_{n+1/2})) } \nonumber \\
&\enspace& -\bigg( \tfrac{|u^{n+1}|^2+|u^n|^2}{2} \tfrac{u(t_{n+1}) + u(t_n)}{2}-|u(t_{n+1/2})|^2)u(t_{n+1/2}) \bigg),\hspace{100pt}
  \end{eqnarray} 
and we define
\begin{eqnarray}\label{enbeta}
 e^n_{\beta,\text{\tiny CN}} &:=&  \frac{|u^{n+1}|^2+|u^n|^2}{2}u^{n+1/2}-\frac{|u(t_{n+1})|^2+|u(t_n)|^2}{2}\bigg(\frac{u(t_{n+1})+u(t_{n})}{2} \bigg) \nonumber \\
&=& \bigg(\frac{|u^{n+1}|^2+|u^n|^2}{2}-\frac{|u(t_{n+1})|^2+|u(t_n)|^2}{2}\bigg)\bigg(\frac{u(t_{n+1})+u(t_{n})}{2} \bigg)+ \\
&& \qquad + \frac{|u^{n+1}|^2+|u^n|^2}{2}\bigg(\frac{u(t_{n+1})+u(t_{n})}{2}-u^{n+1/2} \bigg). \nonumber
\end{eqnarray}
Furthermore in virtue of \ref{A6} there holds:
\begin{eqnarray*}
\sum_{k=0}^n \|T^k\|_{H^2(\D)}^2 \lesssim \tau^3 .
\end{eqnarray*}
\end{lemma}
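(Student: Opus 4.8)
The plan is to split the consistency error \eqref{Taylor} into its three natural contributions,
$T^n_1 := \ci\,(\tfrac{u(t_{n+1})-u(t_n)}{\tau}-\partial_t u(t_{n+1/2}))$, the linear averaging error
$T^n_2 := (\triangle-V)(\tfrac{u(t_{n+1})+u(t_n)}{2}-u(t_{n+1/2}))$, and the nonlinear averaging error
$T^n_3 := \tfrac{|u(t_{n+1})|^2+|u(t_n)|^2}{2}\,\tfrac{u(t_{n+1})+u(t_n)}{2}-|u(t_{n+1/2})|^2 u(t_{n+1/2})$,
and to bound each $\|T^n_j\|_{H^2(\D)}$ by $\tau$ times a \emph{local-in-time} $L^1([t_n,t_{n+1}])$-norm of a second or third time derivative of $u$. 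The tool is Taylor's formula with integral remainder, expanded about the midpoint $t_{n+1/2}$. For $T^n_1$ the zeroth-, first- and second-order terms cancel in the difference quotient, leaving
$$\tfrac{u(t_{n+1})-u(t_n)}{\tau}-\partial_t u(t_{n+1/2}) = \tfrac{1}{2\tau}\Big(\int_{t_{n+1/2}}^{t_{n+1}}(t_{n+1}-s)^2\,\partial_t^{(3)}u(s)\,ds + \int_{t_n}^{t_{n+1/2}}(s-t_n)^2\,\partial_t^{(3)}u(s)\,ds\Big),$$
so that $\|T^n_1\|_{H^2(\D)}\le \tfrac{\tau}{8}\int_{t_n}^{t_{n+1}}\|\partial_t^{(3)}u(s)\|_{H^2(\D)}\,ds$, since $(t_{n+1}-s)^2$ and $(s-t_n)^2$ are at most $\tau^2/4$ on the respective subintervals.

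For $T^n_2$ I would use the analogous second-order identity $\tfrac{u(t_{n+1})+u(t_n)}{2}-u(t_{n+1/2}) = \tfrac12(\int_{t_{n+1/2}}^{t_{n+1}}(t_{n+1}-s)\,\partial_{tt}u(s)\,ds + \int_{t_n}^{t_{n+1/2}}(s-t_n)\,\partial_{tt}u(s)\,ds)$; since $\D$ is a convex polygon with $d\le 3$, $V\in H^2(\D)$, and $H^2(\D)$ is a Banach algebra, the map $w\mapsto (\triangle-V)w$ is bounded from $H^4(\D)\cap H^1_0(\D)$ into $H^2(\D)$, whence $\|T^n_2\|_{H^2(\D)}\lesssim \tau\int_{t_n}^{t_{n+1}}\|\partial_{tt}u(s)\|_{H^4(\D)}\,ds$. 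For $T^n_3$ I would split the product of trapezoidal averages as $(\tfrac{g(t_{n+1})+g(t_n)}{2}-g(t_{n+1/2}))\,\tfrac{u(t_{n+1})+u(t_n)}{2} + g(t_{n+1/2})\,(\tfrac{u(t_{n+1})+u(t_n)}{2}-u(t_{n+1/2}))$, with $g(t):=|u(t)|^2=u(t)\overline{u(t)}$, apply the second-order integral-remainder identity to each (average $-$ midpoint) factor, and invoke the algebra property of $H^2(\D)$ together with the fact that $u,\partial_t u,\partial_{tt}u\in L^\infty(0,T;H^2(\D))$ — which follows from \ref{A6} and the embedding $H^1(0,T;X)\hookrightarrow C([0,T];X)$, and which also puts $\partial_{tt}g$ in $L^2(0,T;H^2(\D))$. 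This yields $\|T^n_3\|_{H^2(\D)}\lesssim \tau\int_{t_n}^{t_{n+1}}(\|\partial_{tt}u(s)\|_{H^2(\D)}+\|\partial_t u(s)\|_{H^2(\D)}^2)\,ds$, with hidden constant depending on $\|u\|_{L^\infty(0,T;H^2(\D))}$.

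Having all three $\|T^n_j\|_{H^2(\D)}$ in the shape $\tau\int_{t_n}^{t_{n+1}}\phi_j(s)\,ds$ with $\phi_j\in L^2(0,T)$ (this is where \ref{A6} and the $L^\infty$-in-time regularity above are used, in particular that $s\mapsto\|\partial_t u(s)\|_{H^2(\D)}^2$ is even bounded), I would square, apply the Cauchy--Schwarz inequality on $[t_n,t_{n+1}]$ so that $\tau^2(\int_{t_n}^{t_{n+1}}\phi_j)^2\le \tau^3\int_{t_n}^{t_{n+1}}\phi_j^2$, and sum over $k=0,\dots,n$, using $\sum_{k=0}^n\int_{t_k}^{t_{k+1}}\phi_j^2=\int_0^{t_{n+1}}\phi_j^2\le\|\phi_j\|_{L^2(0,T)}^2$. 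This gives $\sum_{k=0}^n\|T^k\|_{H^2(\D)}^2\lesssim\tau^3$ with the constant controlled by $\|\partial_t^{(3)}u\|_{L^2(0,T;H^2)}$, $\|\partial_{tt}u\|_{L^2(0,T;H^4)}$ and lower-order norms of $u$, all finite under \ref{A6}.

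The Taylor expansions and the Cauchy--Schwarz/summation bookkeeping are routine; the step that needs care is $T^n_3$, where one must organize the expansion of the cubic nonlinearity so that every resulting summand is a product of one factor bounded in $L^\infty(0,T;H^2(\D))$ (or $L^\infty(0,T;H^4(\D))$) and one factor of integral-remainder type, and must keep track of which spatial regularity ($H^4$ versus $H^2$) each factor needs — the linear term $T^n_2$ being the only one that genuinely requires the full $L^2(0,T;H^4)$-control of $\partial_{tt}u$, because of the Laplacian. A final point to double-check is that \ref{A6} is invoked correctly: it supplies $\partial_t^{(k)}u\in L^2(0,T;H^4(\D)\cap H^1_0(\D))$ for $0\le k\le 3$, from which both the $L^2$-in-time bounds on the third and second time derivatives and, via the time-Sobolev embedding, the $L^\infty$-in-time bounds on $u,\partial_t u,\partial_{tt}u$ used above are obtained.
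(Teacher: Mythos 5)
Your proposal is correct and follows essentially the same route as the paper, whose own proof of this lemma consists of just two sentences ("subtracting \eqref{model-problem} from \eqref{semi-disc-cnd-problem} one finds equation \eqref{error_pde}; the consistency error is then easily bounded by means of Taylor expansion and assumption \ref{A6}") — you have merely supplied the Taylor-with-integral-remainder identities about $t_{n+1/2}$, the $H^2$-algebra estimates, and the Cauchy--Schwarz-plus-summation bookkeeping that the authors leave implicit. One remark: in your $T^n_3$ you use the exact densities $|u(t_{n+1})|^2+|u(t_n)|^2$ where the displayed formula \eqref{Taylor} has the semi-discrete ones $|u^{n+1}|^2+|u^n|^2$; your version is the one actually consistent with the identity \eqref{error_pde} and the definition \eqref{enbeta} of $e^n_{\beta,\text{\tiny CN}}$ (and is the version needed here, since at this stage only $\|u^n-u(t_n)\|_{H^2(\D)}\lesssim\tau$ is available from Lemma \ref{lemma-semidiscrete-CN}), so you have in effect corrected a typo rather than introduced a gap.
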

\begin{proof}
By simply subtracting \eqref{model-problem} from \eqref{semi-disc-cnd-problem} one finds equation \eqref{error_pde}. The consistency error  \eqref{Taylor} is then easily bounded by means of Taylor expansion and assumption \ref{A6}.
\end{proof}
With the previous two lemmas we are now prepared to prove the optimal  $L^\infty(H^2)$-estimates.
\begin{lemma}[Optimal $L^\infty(H^2)$ error estimate of the Crank--Nicolson method]
\label{optimal-H2-est-lemma}
Assume \ref{A1}-\ref{A6}, let $u^{n}\in H^1_0(\D)$ denote the semi-discrete Crank--Nicolson approximation given by \eqref{semi-disc-cnd-problem} and $u$ the exact solution. Then it holds
\begin{align*}
\sup_{0\le n \le N}  \| u(\cdot , t_n ) - u^n \|_{H^2(\D)}
\lesssim  \tau^2.
\end{align*}
Furthermore, there exists a $\tau$-independent constant $C>0$ such that 
$$
\| \triangle (D_\tau u^{n-1/2}) \|_{H^2(\D)} \leq C.
$$
Note that $D_\tau u^{n-1/2} = \tfrac{1}{2\tau}(u^{n+1}- u^{n-1} )$ and that it does \underline{\emph{not}} imply $\| \triangle (D_\tau u^{n}) \|_{H^2(\D)} \leq C$.
\end{lemma}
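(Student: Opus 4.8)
The plan is to run an energy estimate on the error identity \eqref{error_pde} after hitting it with the operator $\mathcal{H}:=-\triangle+V$. Rewriting \eqref{error_pde} as $\ci D_\tau e^n_\CN = \mathcal{H} e^{n+1/2}_\CN + e^n_{\beta,\text{\tiny CN}} + T^n$, I would apply $\mathcal{H}$ to it, test against $\mathcal{H} e^{n+1/2}_\CN$, and take the imaginary part. The contribution of $\ci\mathcal{H} D_\tau e^n_\CN$ produces the telescoping quantity $\tfrac{1}{2\tau}\big(\|\mathcal{H} e^{n+1}_\CN\|^2-\|\mathcal{H} e^{n}_\CN\|^2\big)$, because $\mathcal{H}$ is time-independent and $\Re\langle\mathcal{H} D_\tau e^n_\CN,\mathcal{H} e^{n+1/2}_\CN\rangle$ is exactly the corresponding difference of norms. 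The contribution of $\mathcal{H}^2 e^{n+1/2}_\CN$ equals $\|\nabla\mathcal{H} e^{n+1/2}_\CN\|^2+\langle V\mathcal{H} e^{n+1/2}_\CN,\mathcal{H} e^{n+1/2}_\CN\rangle$ by integration by parts, hence is real and disappears upon taking imaginary parts. What makes these manipulations legitimate is precisely Lemma \ref{timedsc-regularity}, which together with \ref{A5}--\ref{A6} for the exact solution guarantees $\mathcal{H} e^{n+1/2}_\CN\in H^1_0(\D)\cap H^2(\D)$ and $\mathcal{H}^2 e^{n+1/2}_\CN\in L^2(\D)$ with $\tau$-independent bounds.

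This leaves the identity $\|\mathcal{H} e^{n+1}_\CN\|^2-\|\mathcal{H} e^{n}_\CN\|^2 = 2\tau\,\Im\langle\mathcal{H} T^n,\mathcal{H} e^{n+1/2}_\CN\rangle + 2\tau\,\Im\langle\mathcal{H} e^n_{\beta,\text{\tiny CN}},\mathcal{H} e^{n+1/2}_\CN\rangle$, and the two right-hand terms must be controlled. For the consistency part, Cauchy--Schwarz and Young give $2\tau|\langle\mathcal{H} T^n,\mathcal{H} e^{n+1/2}_\CN\rangle|\lesssim \tau\|T^n\|_{H^2(\D)}^2 + \tau(\|\mathcal{H} e^{n+1}_\CN\|^2+\|\mathcal{H} e^{n}_\CN\|^2)$, and summing exploits $\sum_k\|T^k\|_{H^2(\D)}^2\lesssim\tau^3$ from Lemma \ref{error_pde-lemma} to yield an $\mathcal{O}(\tau^4)$ contribution. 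For the nonlinear part, I would use the explicit decomposition \eqref{enbeta}: $e^n_{\beta,\text{\tiny CN}}$ is a sum of products in which one factor is of size $\lesssim\|e^k_\CN\|_{H^2(\D)}$ (or $\lesssim\|e^{n+1/2}_\CN\|_{H^2(\D)}$) and the remaining factors are uniformly bounded in $H^2(\D)$ by Lemma \ref{timedsc-regularity}; the $H^2(\D)$ Banach-algebra property ($H^2(\D)\hookrightarrow C(\overline{\D})$ for $d\le 3$) then gives $\|e^n_{\beta,\text{\tiny CN}}\|_{H^2(\D)}\lesssim \|\mathcal{H} e^{n+1}_\CN\|+\|\mathcal{H} e^{n}_\CN\|$, hence the corresponding term is $\lesssim \tau(\|\mathcal{H} e^{n+1}_\CN\|^2+\|\mathcal{H} e^{n}_\CN\|^2)$. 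Summing from $0$ to $n-1$, using that the semi-discrete scheme is started from the exact $u^0$ so that $e^0_\CN=0$, and absorbing the implicit $\|\mathcal{H} e^{n+1}_\CN\|^2$ for $\tau\le\tau_0$, the discrete Gr\"onwall inequality gives $\|\mathcal{H} e^{n}_\CN\|^2\lesssim\tau^4$ uniformly in $n$. Since $\D$ is convex polyhedral and $V\in L^\infty(\D;\R_{\ge 0})$, elliptic regularity bounds $\|e^n_\CN\|_{H^2(\D)}$ by a constant times $\|\mathcal{H} e^n_\CN\|$, which is the claimed $\mathcal{O}(\tau^2)$ estimate.

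For the second assertion I would not differentiate $u^n$ directly but use the centred structure. From \eqref{semi-disc-cnd-problem} one has $\mathcal{H} u^{m+1/2}=\ci D_\tau u^m-\beta g^m u^{m+1/2}$ with $g^m:=\tfrac12(|u^{m+1}|^2+|u^m|^2)$; subtracting this at $m=n$ and $m=n-1$ and using $u^{n+1/2}-u^{n-1/2}=\tau D_\tau u^{n-1/2}$ yields
\[
\mathcal{H}\,D_\tau u^{n-1/2}\;=\;\ci\,\frac{u^{n+1}-2u^n+u^{n-1}}{\tau^2}\;-\;\frac{\beta}{\tau}\big(g^n u^{n+1/2}-g^{n-1}u^{n-1/2}\big).
\]
The first term is $\mathcal{O}(1)$ in $H^2(\D)$: splitting $u^k=u(t_k)+e^k_\CN$, the second difference of $u(t_k)$ is $\mathcal{O}(\tau^2)$ in $H^2(\D)$ by the time regularity in \ref{A6}, while the second difference of $e^k_\CN$ is $\mathcal{O}(\tau^2)$ in $H^2(\D)$ by the $L^\infty(H^2)$ estimate just proved. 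The second term is also $\mathcal{O}(1)$ in $H^2(\D)$: after telescoping, each summand carries a factor $u^{n+1}-u^{n-1}=2\tau D_\tau u^{n-1/2}$ whose $H^2(\D)$-norm is $\mathcal{O}(\tau)$ by Lemma \ref{timedsc-regularity}, the other factors being uniformly $H^2(\D)$-bounded. Hence $\|\mathcal{H} D_\tau u^{n-1/2}\|_{H^2(\D)}\le C$, and since $V\in H^2(\D)$ and $\|D_\tau u^{n-1/2}\|_{H^2(\D)}\lesssim 1$, also $\|\triangle D_\tau u^{n-1/2}\|_{H^2(\D)}\le C$. The same computation for the one-sided quotient gives $\mathcal{H} D_\tau u^{n}=-\ci\big(\mathcal{H}^2 u^{n+1/2}+\beta\,\mathcal{H}(g^n u^{n+1/2})\big)$, whose $H^2(\D)$-control would require sixth-order spatial regularity of $u^n$ through $\|\mathcal{H}^2 u^{n+1/2}\|_{H^2(\D)}$, which is not available; only the centred quotient trades this term for the $\tau^2$-smoothed second difference, which is why the bound holds for $D_\tau u^{n-1/2}$ but not for $D_\tau u^{n}$.

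The main obstacle is not the energy/Gr\"onwall skeleton, which is standard, but the regularity bookkeeping: checking that $\mathcal{H}$ and $\mathcal{H}^2$ may legitimately be applied to $e^{n+1/2}_\CN$, and that every product appearing in $e^n_{\beta,\text{\tiny CN}}$ and $T^n$ lies in $H^2(\D)$ with $\tau$-independent bounds, all of which rests on Lemma \ref{timedsc-regularity}; and, for the final statement, recognising that only the centred difference quotient admits the reformulation whose right-hand side stays bounded as $\tau\to 0$.
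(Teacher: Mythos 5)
Your argument is correct, and it splits into one part that mirrors the paper and one that genuinely departs from it. For the $L^\infty(H^2)$ bound you apply $\mathcal{H}=-\triangle+V$ to the error equation, test with $\mathcal{H}e^{n+1/2}_\CN$ and take imaginary parts, whereas the paper multiplies by $D_\tau\triangle e^n_\CN$ and takes real parts; these are the two standard equivalent formulations of the same second-order energy identity (telescoping $\|\mathcal{H}e^n_\CN\|^2$ versus $\|\triangle e^n_\CN\|^2$, which are comparable by elliptic regularity on the convex domain), and both hinge on the same regularity bookkeeping from Lemma \ref{timedsc-regularity}. Your use of the $H^2$ Banach-algebra property to get $\|e^n_{\beta,\text{\tiny CN}}\|_{H^2(\D)}\lesssim\|\mathcal{H}e^n_\CN\|+\|\mathcal{H}e^{n+1}_\CN\|$ is a clean shortcut for the paper's term-by-term Leibniz expansion and suffices for Gr\"onwall. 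For the second assertion your route is genuinely different: the paper runs two further energy estimates to obtain the intermediate bounds $\|\nabla\triangle e^n_\CN\|\lesssim\tau^{3/2}$ and $\|\triangle^2 e^{n+1/2}_\CN\|\lesssim\tau$ and then forms the difference quotient of $\triangle^2 e^{n+1/2}_\CN$, while you difference the strong form of the scheme at consecutive half-steps to get the exact identity $\mathcal{H}D_\tau u^{n-1/2}=\ci\,\tau^{-2}(u^{n+1}-2u^n+u^{n-1})-\beta\tau^{-1}(g^nu^{n+1/2}-g^{n-1}u^{n-1/2})$ and bound its right-hand side directly from the just-proved $L^\infty(H^2)$ estimate (for the second-difference quotient) and the uniform $H^2$-bounds of Lemma \ref{timedsc-regularity} (for the telescoped nonlinear term). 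This is shorter, avoids the fourth-order spatial energy estimates entirely, and your explanation of why only the centred quotient works matches the paper's remark. One point to make explicit: bounding the second difference of the exact solution by $\tau^2$ in $H^2(\D)$ requires $\partial_t^2 u\in L^\infty(0,T;H^2(\D))$, which does follow from \ref{A6} with $k\le 3$ via $H^1(0,T;H^4)\hookrightarrow C([0,T];H^4)$, but with only the $L^2$-in-time bound for $\partial_t^2 u$ you would lose half an order there.
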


\begin{proof}
First, we note that $D_\tau e^n_\CN ,Ve^{n+1/2}_\CN,T^n,e^n_{\beta,\text{\tiny CN}} \in H^1_0(\D)$ which allows for integration by parts without boundary terms.
Now, multiplying equation \eqref{error_pde} by $D_\tau \triangle e^n_\CN$ and considering only the real part results in:
\begin{eqnarray}\label{H2-rec}
\lefteqn{\frac{\|\triangle e^{n+1}_\CN\|^2-\|\triangle e^n_\CN\|^2}{2\tau} = \Re\bigg(\langle \triangle(Ve^{n+1/2}_\CN),D_\tau e^n_\CN\rangle +\langle \triangle e^n_{\beta,\text{\tiny CN}} ,D_\tau e^n\rangle + \langle \triangle T^n,D_\tau  e^n_\CN\rangle  \bigg)} \nonumber \\
\nonumber&\leq& |\langle \triangle (Ve^{n+1/2}_\CN),-\triangle e^{n+1/2}_\CN+Ve^{n+1/2}_\CN+e^n_{\beta,\text{\tiny CN}}+T^n\rangle|  \\
&\enspace& \quad + |\langle \triangle e^n_{\beta,\text{\tiny CN}} ,-\triangle e^{n+1/2}_\CN+Ve^{n+1/2}_\CN+e^n_{\beta,\text{\tiny CN}}+T^n\rangle | \nonumber\\
&\enspace& \quad + |\langle \triangle T^n, -\triangle e^{n+1/2}_\CN+Ve^{n+1/2}_\CN+e^n_{\beta,\text{\tiny CN}}+T^n\rangle| \nonumber  \\
&\lesssim& \|\triangle e^{n+1}_\CN\|^2+\|\triangle e^n_\CN\|^2+ \tau^4 +\|T^n\|^2_{H^2(\D)}+\| e^n_{\beta,\text{\tiny CN}}\|^2_{H^2(\D)},\hspace{120pt}
\end{eqnarray}
where elliptic regularity theory guarantees $\| e^n_{\beta,\text{\tiny CN}}\|_{H^2(\D)} \lesssim \| \triangle e^n_{\beta,\text{\tiny CN}}\|$.
In order to use Gr\"onwall's inequality we need to bound $\|\triangle e^n_{\beta,\text{\tiny CN}}\|$ in terms of $\tau^2$,  $\|\triangle e^{n+1}_\CN\|$ and $\|\triangle e^{n}_\CN\|$. From equation \eqref{enbeta} it is clear that this need only be done for two kinds of expression, namely the expressions $\triangle[|u^n|^2(u(t_n)-u^{n})]$ and $\triangle [(|u^n|^2-|u(t_n)|^2)u(t_{n})]$. We expand these two cases using Leibniz's rule. For the first term we use $\triangle |u^n|^2 = 2 |\nabla u^n|^2 + 2 \Re \left( u^n \overline{\triangle u^n} \right)$ to obtain
\begin{eqnarray*}
\lefteqn{\|\triangle[|u^n|^2(u(t_n)-u^n)]\|} \\
&=& \| \triangle |u^n|^2 \hspace{2pt} (u(t_n)-u^n)+2\nabla|u^n|^2\cdot\nabla(u(t_n)-u^n))+|u^n|^2\triangle(u(t_n)-u^n)\| \\
& \lesssim & \| \nabla u^n \|_{L^4(\D)} \| u(t_n)-u^n \|_{L^4(\D)} + \| u^n \|_{L^{\infty}(\D)} \| \triangle u^n \|_{L^4(\D)} \| u(t_n)-u^n \|_{L^4(\D)} \\
&\enspace& \quad + \| u^n \|_{L^{\infty}(\D)} \| \nabla u^n \|_{L^4(\D)} \| \nabla(u(t_n)-u^n)) \|_{L^4(\D)} +\| u^n\|_{L^\infty(\D)}^2 \|\triangle (u(t_n)-u^n)\| \\
& \lesssim & C( \| u^n \|_{H^2(\D)} , \| \triangle u^n \|_{H^1(\D)}  ) \hspace{4pt} \left(\| u(t_n)-u^n \|_{H^1(\D)} + \| u(t_n)-u^n \|_{H^2(\D)}\right),
\end{eqnarray*}
where we used Sobolev embedding estimates. Lemma \ref{lemma-semidiscrete-CN} and Lemma \ref{timedsc-regularity} allow us to bound $\| u^n \|_{H^2(\D)}$ and $\| \triangle u^n \|_{H^1(\D)}$. Together with the regularity estimate $\|v \|_{H^2(\D)} \lesssim \| \triangle v \|$ for $v\in H^1_0(\D)$ we conclude
\begin{align*}
\|\triangle[|u^n|^2(u(t_n)-u^n)]\| \lesssim \tau^2 + \| \triangle e^n_\CN \|.
\end{align*}
For the term $\triangle [(|u^n|^2-|u(t_n)|^2)u(t_{n})]$ we split
\begin{eqnarray*}
\lefteqn{\triangle[(|u^n|^2-|u(t_n)|^2)u(t_n)] }  \\
&=&\underbrace{\triangle(|u^n|^2-|u(t_n)|^2) \hspace{3pt} u(t_n)}_{\mbox{I}} + 2\underbrace{\nabla(|u^n|^2-|u(t_n)|^2)\cdot\nabla u(t_n)}_{\mbox{II}} + \underbrace{(|u^n|^2-|u(t_n)|^2) \hspace{2pt} \triangle u(t_n)}_{\mbox{III}},
\end{eqnarray*}
where we can estimate $I$ using $\triangle |u^n|^2 = 2 |\nabla u^n|^2 + 2 \Re \left( u^n \overline{\triangle u^n} \right)$ by
\begin{align*}
\|\mbox{I}\| &\lesssim \| u(t_n) \|_{L^{\infty}(\D)} \left( \| |\nabla u^n| + |\nabla u(t_n)| \|_{L^4(\D)} \| \nabla e^n_\CN \|_{L^4(\D)} \right.\\ 
&\qquad \left.+ \| e^n_\CN \|_{L^4(\D)} \| \triangle u(t^n) \|_{L^4(\D)} + \| u^n \|_{L^{\infty}(\D)}  \| \triangle e^n_\CN \|\right) \\
&\lesssim \| \nabla e^n_\CN \| + \| \triangle e^n_\CN \| \lesssim \tau^2  + \| \triangle e^n_\CN \|.
\end{align*}
Term II can be bounded as 
\begin{align*}
\|\mbox{II}\| \lesssim \| \nabla u(t_n) \|_{L^{\infty}(\D)} \left( 
\| u^n \|_{L^{\infty}(\D)} \| \nabla e^n_\CN \| + \| \nabla u(t^n) \|_{L^{\infty}(\D)}  \| e^n_\CN \|
\right) \lesssim \tau^2
\end{align*}
and term III easily as
\begin{align*}
\|\mbox{III}\| \lesssim \| \triangle u(t_n) \|_{L^{\infty}(\D)} \left( 
\| u^n \|_{L^{\infty}(\D)} + \| u(t_n) \|_{L^{\infty}(\D)}  \right) \| e^n \|
\lesssim \tau^2.
\end{align*}
Combining the three estimates yields
\begin{eqnarray*}
\|\triangle[(|u^n|^2-|u(t_n)|^2)u(t_n)]\| \lesssim \tau^2 +  \| \triangle e^n_\CN \|.
\end{eqnarray*}
%
%
With this the $H^2$-error recursion \eqref{H2-rec} becomes:
\begin{eqnarray}
\frac{\|\triangle e^{n+1}_\CN\|^2-\|\triangle e^n_\CN\|^2}{2\tau}\lesssim  \|\triangle e^{n+1}_\CN\|^2+\|\triangle e^n_\CN\|^2+ \tau^4 +\|T^n\|^2.
\end{eqnarray}
Gr\"onwall's inequality and Lemma \ref{error_pde-lemma} now yield the optimal estimate,
\[ \|\triangle e^{n+1}_\CN\| \lesssim  \tau^2.\]
This finishes the first part of the proof. 

Next, we prove the bound $\| \triangle (D_\tau u^{n-1/2}) \|_{H^2(\D)} \|\lesssim 1$. For that, we multiply the error recursion \eqref{error_pde} by $-\triangle^2(e^{n+1}_\CN-e^n_\CN)$. Recalling that $\triangle(e^{n+1}_\CN-e^n_\CN) \in H^1_0(\D)$ we can integrate by parts two times to obtain 
\begin{eqnarray*}
\lefteqn{\|\nabla \triangle e^{n+1}_\CN\|^2-\|\nabla\triangle e^n_\CN\|^2}\\
 &=& -\Re\bigg( \langle Ve^{n+1/2}_\CN,\triangle ^2 (e^{n+1}_\CN-e^n_\CN)\rangle +\langle e^n_{\beta,\text{\tiny CN}},\triangle^2(e^{n+1}_\CN-e^n_\CN)\rangle+\langle T^n,\triangle^2(e^{n+1}_\CN-e^n_\CN)\rangle  \bigg) \\
&\leq & |\langle \triangle(Ve^{n+1/2}_\CN),\triangle(e^{n+1}_\CN-e^n_\CN)\rangle| + |\langle \triangle e^n_{\beta,\text{\tiny CN}},\triangle(e^{n+1}_\CN-e^n_\CN)\rangle| +|\langle \triangle T^n,\triangle(e^{n+1}_\CN-e^n_\CN)\rangle| \\
&\lesssim & \tau^4+ \|\triangle T^n\|^2 \lesssim \tau^4.
\end{eqnarray*}
Thus we conclude
\[\|\nabla \triangle e^n_\CN\|\lesssim \tau^{3/2}.\]
Next we apply $\triangle$ to the error recursion \eqref{error_pde}, then multiply by $\triangle^2e^{n+1/2}_\CN$, integrate by parts for the $D_{\tau}$-term and consider the real part to find:
\begin{eqnarray*}
\lefteqn{\|\triangle^2 e^{n+1/2}_\CN\|^2}  \\
& \leq &  \tfrac{1}{\tau} |\langle \nabla \triangle e^{n+1}_\CN,\nabla \triangle e^n_\CN\rangle |+|\langle \triangle(Ve^{n+1/2}_\CN) + \triangle e^n_{\beta,\text{\tiny CN}} + \triangle T^n,\triangle^2 e^{n+1/2}_\CN\rangle |.
\end{eqnarray*}
With the previous estimate $\|\nabla \triangle e^n_\CN\|\lesssim \tau^{3/2}$ and Young's inequality we find:
\begin{eqnarray}\label{InftyH4avg}
\|\triangle^2e^{n+1/2}_\CN\|^2\lesssim \tau^2+\tau^4+\|\triangle T^n\|^2 \lesssim \tau^2.
\end{eqnarray}
It therefore follows $\|\triangle^2 e^{n+1/2}_\CN\|\lesssim \tau$ and $\|D_\tau(\triangle^2e^{n+1/2}_\CN)\|\leq C$. This finishes the argument, because $\triangle (D_\tau u^{n+1/2}_\CN) \in H^1_0(\D)$ and hence
$$
\| \triangle (D_\tau u^{n+1/2}_\CN) \|_{H^2(\D)}
\lesssim \| \triangle^2 (D_\tau u^{n+1/2}_\CN) \|
\lesssim \|D_\tau(\triangle^2e^{n+1/2}_\CN)\| + \| \triangle^{\hspace{-1pt}2} \hspace{-2pt} \left(\tfrac{u(t_{n+1}) - u(t_n)}{2\tau}\right) \| \lesssim 1.
$$
\end{proof}

Collecting all the previous results, we are now able to quantify how well $u^n$ and $D_{\tau} u^n$ are approximated in $V\LOD$.
\begin{conclusion}
\label{conclusion-ALOD-errors}
Assume \ref{A1}-\ref{splitting-potential} and let $u^n$ denote the solution to the semi-discrete method \eqref{semi-disc-cnd-problem}. If $A\LOD : H^1_0(\D) \rightarrow V\LOD$ denotes the $a(\cdot,\cdot)$-orthogonal projection onto the LOD space, i.e.,
$$
a( A\LOD(u) , v ) = a( u , v) \qquad \mbox{for all } v\in V\LOD,
$$
then we have the estimates
\begin{align}
\label{bounds-ALOD-un}
\| u^n - A\LOD(u^n) \| \lesssim H^4 \qquad \mbox{and} 
\qquad \| D_{\tau} u^{n-1/2} - A\LOD(D_{\tau} u^{n-1/2}) \| \lesssim H^4
\end{align}
with $D_\tau u^{n-1/2} = \tfrac{1}{2\tau}(u^{n+1}- u^{n-1} )$, as well as
\begin{align*}
 \| D_{\tau} u^n - A\LOD(D_{\tau} u^n) \| \lesssim H^4 + \tau^2.
\end{align*}
\end{conclusion}

\begin{proof}
Applying the general theory of Section \ref{section-LOD}, the first estimate follows from 
\begin{align*}
\| u^n - A\LOD(u^n) \| \lesssim H^4 \| \triangle u^n + V_1 u^n \|_{H^2(\D)},
\end{align*}
where $\| \triangle u^n + V_1 u^n \|_{H^2(\D)}$ is bounded by Lemma \ref{timedsc-regularity}. In a similar way, using Lemma \ref{optimal-H2-est-lemma} we have
\begin{align*}
\| D_{\tau} u^{n-1/2} - A\LOD(D_{\tau} u^{n-1/2}) \| \lesssim H^4 \| \triangle (D_{\tau} u^{n-1/2}) + V_1 D_{\tau} u^{n-1/2} \|_{H^2(\D)} \lesssim H^4.
\end{align*}
For the last estimate we use Lemma \ref{optimal-H2-est-lemma} which ensures that 
\begin{align}
\label{est-opt-Dtau-en}
\| D_{\tau} e^n \|_{H^2(\D)} = \tfrac{1}{\tau} \| e^n \|_{H^2(\D)} \lesssim \tau.
\end{align}
Consequently, we have
\begin{align*}
\| D_{\tau} u^n - A\LOD(D_{\tau} u^n) \| &\le 
\| D_{\tau} e^n - A\LOD(D_{\tau} e^n) \| + \tfrac{1}{\tau} \| u(t_{n+1}) - u(t_{n})  - A\LOD( u(t_{n+1}) - u(t_{n}))  \| \\
&\lesssim H^2 \| D_{\tau} e^n \|_{H^2(\D)} + H^4 \| \partial_t u \|_{L^{\infty}(t_n,t_{n+1};H^4(\D))} \\
&\overset{\eqref{est-opt-Dtau-en}}{\lesssim} H^2 \tau + H^4 \le \tfrac{3}{2} H^4 +\tfrac{1}{2} \tau^2.
\end{align*}
Note that we know that $\triangle D_\tau u^n \in H^2(\D)\cap H^1_0(\D)$ which allows for the direct estimate $\| D_{\tau} u^n - A\LOD(D_{\tau} u^n) \| \le C H^4 \|(-\triangle + V_1) D_\tau u^n \|_{H^2(\D)}$. However, we are lacking an estimate that guarantees that $\|\triangle D_\tau u^n \|_{H^2(\D)}$ can be bounded independently of $\tau$.
\end{proof}

As a last preparation for the final a priori error estimate, we also require regularity bounds for the $a(\cdot,\cdot)$-projection of a smooth function onto the LOD-space. We stress that the following lemma is only needed in the case $d=3$ to obtain optimal $L^{\infty}(L^2)$-error estimates for our method. In $1d$ and $2d$ the following lemma is not needed.

\begin{lemma}[$H^2$-regularity in the LOD space]\label{H2LOD}
Assume \ref{A1}-\ref{A4} and \ref{splitting-potential} and let $V\LOD$ be the LOD space given by \eqref{def-OD-space} with $a(\cdot,\cdot)$ defined in \eqref{def-a-innerproduct}. Then for any $w \in H^1_0(\D) \cap H^2(\D)$ the LOD approximation $w\LOD \in V\LOD$ with 
\begin{align}
\label{LOD-prob-in-reg-statement}
a(w\LOD , v ) = a(w , v ) \qquad \mbox{for all } v\in V\LOD
\end{align}
fulfills
$$
w\LOD \in H^1_0(\D) \cap H^2(\D) \qquad \mbox{with } \| w\LOD \|_{H^2(\D)} \le C \| w \|_{H^2(\D)},
$$
where $C$ only depends on $a(\cdot,\cdot)$, $\D$ and mesh regularity constants.

Furthermore, for any $v\LOD \in V\LOD$ we have the inverse estimates
\begin{align}\label{inverse-inequalities-LOD-space}
\| v\LOD \|_{H^1(\D)} \lesssim H^{-1} \| v\LOD \|
\qquad
\mbox{and}
\qquad
\| v\LOD \|_{L^{\infty}(\D)} \lesssim H^{-1} \| v\LOD \|_{H^1(\D)}.
\end{align}
\end{lemma}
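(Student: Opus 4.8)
\emph{Plan.} The whole argument rests on one structural observation: the functional $v\mapsto a(w\LOD,v)$ on $H^1_0(\D)$ vanishes on the detail space $W$, and hence, through the $L^2$-orthogonal splitting $H^1_0(\D)=V_H\oplus W$, it is represented by an $L^2$-function that lies in the \emph{coarse} space $V_H$. Concretely, let $\Pi_W w\in W$ be the $a(\cdot,\cdot)$-orthogonal projection of $w$ onto the closed subspace $W$ (which exists by coercivity of $a(\cdot,\cdot)$), write $\psi:=\Pi_W w$ and $\tilde w:=w-\psi$. Since $W\perp_a V\LOD$, one checks directly that $a(\tilde w,z)=a(w,z)-a(\psi,z)=0$ for $z\in W$ (so $\tilde w\in V\LOD$) and that $a(\tilde w,v)=a(w,v)$ for $v\in V\LOD$; by uniqueness of the solution of \eqref{LOD-prob-in-reg-statement} this gives $w\LOD=w-\psi$, and it suffices to prove $\psi\in H^1_0(\D)\cap H^2(\D)$ with $\|\psi\|_{H^2(\D)}\lesssim\|w\|_{H^2(\D)}$.

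To this end I first identify the elliptic problem solved by $\psi$. Because $w\in H^1_0(\D)\cap H^2(\D)$, integration by parts gives $a(w,z)=\langle -\triangle w+\Va w,z\rangle$ for every $z\in H^1_0(\D)$, and for $z\in W$ the right-hand side equals $\langle(I-P_H)(-\triangle w+\Va w),z\rangle$ because $P_H z=0$. Splitting an arbitrary $v\in H^1_0(\D)$ as $v=P_Hv+(I-P_H)v$ with $(I-P_H)v\in W$, and using $a(\psi,z)=a(w,z)$ for $z\in W$, one obtains
\[
a(\psi,v)=\langle\mu+(I-P_H)(-\triangle w+\Va w),v\rangle\qquad\text{for all }v\in H^1_0(\D),
\]
where $\mu\in V_H$ is the $L^2$-Riesz representative of $\phi\mapsto a(\psi,\phi)$ restricted to $V_H$. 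Hence $-\triangle\psi+\Va\psi=\mu+(I-P_H)(-\triangle w+\Va w)\in L^2(\D)$, and $H^2$-regularity for $-\triangle+\Va$ on the convex polyhedral domain $\D$ (recall $\Va\ge 0$ and $\Va\in H^2(\D)\hookrightarrow L^\infty(\D)$ for $d\le 3$) yields $\psi\in H^1_0(\D)\cap H^2(\D)$ with $\|\psi\|_{H^2(\D)}\lesssim\|\mu\|+\|(I-P_H)(-\triangle w+\Va w)\|$.

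It remains to estimate the two terms. The second is harmless: $L^2$-stability of $P_H$ gives $\|(I-P_H)(-\triangle w+\Va w)\|\lesssim\|-\triangle w+\Va w\|\lesssim\|w\|_{H^2(\D)}$. \textbf{The main obstacle is the bound for $\|\mu\|$.} Testing $a(\psi,\phi)=\langle\mu,\phi\rangle$ with $\phi=\mu$ and using the $P1$-inverse inequality on $V_H$ gives only $\|\mu\|^2=a(\psi,\mu)\lesssim H^{-1}\|\psi\|_{H^1(\D)}\|\mu\|$, i.e.\ $\|\mu\|\lesssim H^{-1}\|\psi\|_{H^1(\D)}$, which on its own is a factor $H^{-1}$ too large. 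The remedy is that $\psi$ has $H^1$-norm of order $H$: testing $a(\psi,z)=\langle(I-P_H)(-\triangle w+\Va w),z\rangle$ with $z=\psi\in W$, bounding the right-hand side by Cauchy--Schwarz, and invoking the $W$-approximation estimate $\|\psi\|=\|(I-P_H)\psi\|\lesssim H\|\psi\|_{H^1(\D)}$ (as in \eqref{Proj}), coercivity of $a(\cdot,\cdot)$ gives $\|\psi\|_{H^1(\D)}\lesssim H\|(I-P_H)(-\triangle w+\Va w)\|\lesssim H\|w\|_{H^2(\D)}$. Consequently $\|\mu\|\lesssim\|w\|_{H^2(\D)}$, so $\|\psi\|_{H^2(\D)}\lesssim\|w\|_{H^2(\D)}$ and $\|w\LOD\|_{H^2(\D)}\le\|w\|_{H^2(\D)}+\|\psi\|_{H^2(\D)}\lesssim\|w\|_{H^2(\D)}$.

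For the inverse estimates \eqref{inverse-inequalities-LOD-space}, let $v\LOD\in V\LOD$ and set $v_H:=P_Hv\LOD\in V_H$. From $a(v\LOD,z)=0$ for $z\in W$ one verifies $v\LOD-v_H=-\Pi_W v_H$; testing the defining relation of $\Pi_W v_H$ with $\Pi_W v_H$ and using coercivity and continuity of $a(\cdot,\cdot)$ yields $\|\Pi_W v_H\|_{H^1(\D)}\lesssim\|v_H\|_{H^1(\D)}$, hence $\|v\LOD\|_{H^1(\D)}\lesssim\|v_H\|_{H^1(\D)}\lesssim H^{-1}\|v_H\|\le H^{-1}\|v\LOD\|$ by the $P1$-inverse inequality and $L^2$-stability of $P_H$. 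For the $L^\infty$-bound I reuse the representation from the first part: since $a(v\LOD,\cdot)$ annihilates $W$, there is $\mu\in V_H$ with $-\triangle v\LOD+\Va v\LOD=\mu\in L^2(\D)$, so $v\LOD\in H^1_0(\D)\cap H^2(\D)$ with $\|v\LOD\|_{H^2(\D)}\lesssim\|\mu\|$; bounding $\|\mu\|^2=a(v\LOD,\mu)\lesssim H^{-1}\|v\LOD\|_{H^1(\D)}\|\mu\|$ gives $\|v\LOD\|_{H^2(\D)}\lesssim H^{-1}\|v\LOD\|_{H^1(\D)}$, and the Sobolev embedding $H^2(\D)\hookrightarrow L^\infty(\D)$ (valid for $d\le 3$) gives $\|v\LOD\|_{L^\infty(\D)}\lesssim H^{-1}\|v\LOD\|_{H^1(\D)}$.
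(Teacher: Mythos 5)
Your proof is correct and follows essentially the same route as the paper: the paper writes $a(w\LOD,v)=\langle \lambda_H,v\rangle$ for all $v\in H^1_0(\D)$ with a Lagrange multiplier $\lambda_H\in V_H$ (your $\mu$ equals $P_H(-\triangle w+\Va w)-\lambda_H$ up to sign conventions), applies elliptic regularity on the convex domain, and bounds the coarse representer by combining the inverse inequality in $V_H$ with the first-order bound $\|w-w\LOD\|_{H^1(\D)}\lesssim H\|w\|_{H^2(\D)}$, exactly as you do; the $L^\infty$ inverse estimate is argued identically. Your derivation of the $H^1$ inverse estimate via $\|P_H v\LOD\|\le\|v\LOD\|$ is a slightly more streamlined version of the paper's Young-inequality argument, but rests on the same ingredients.
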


\begin{proof}
To prove the regularity statement, we start with rewriting \eqref{LOD-prob-in-reg-statement} in a saddle point formulation. For that, we do not introduce the space $W$ explicitly, but we impose constraints through a Lagrange multiplier (cf. \cite{ENGWER2019123} for a corresponding formulation in a fully algebraic setting). The projection $w\LOD$ of $w$ onto the LOD space as given by \eqref{LOD-prob-in-reg-statement} can be equivalently characterized in the following way: find $Q_w \in H^1_0(\D)$ and $\lambda_H \in V_H$ such that 
\begin{align*}
a( Q_w , v ) - \langle \lambda_H , P_H(v) \rangle &= - a( P_H(w) , v ) \qquad \mbox{for all } v\in H^1_0(\D) \\
\langle P_H( Q_w ) , q_H \rangle &= 0 \hspace{81pt}  \mbox{for all } q_H \in V_H.
\end{align*}
It is easily seen that
$$
w\LOD = P_H(w) + Q_w
$$
and that $\lambda_H$ is the $L^2$-Riesz representer of the operator $a ( v\LOD ,\cdot )$ in $V_H$. Hence, $\lambda_H$ should be seen as an approximation of the \quotes{source term} $f= - \triangle w + V_1 w$. Since $P_H$ is the $L^2$-projection, the first equation in the saddle point system simplifies to
\begin{align*}
a( P_H(w) + Q_w , v ) &= \langle \lambda_H , v \rangle & \qquad \mbox{for all } v\in H^1_0(\D).
\end{align*}
Hence we can characterize $w\LOD \in H^1_0(\D)$ as the solution to
\begin{align*}
a( w\LOD  , v )  &= \langle \lambda_H , v \rangle 
 \qquad \mbox{for all } v\in H^1_0(\D).
\end{align*}
Since the coefficients in $a(\cdot,\cdot)$ are smooth and since $\D$ is convex, standard elliptic regularity yields $w\LOD \in H^2(\D)$ (cf. \cite[Theorem 3.2.1.2]{Gri85}) and 
$$
\| w\LOD  \|_{H^2(\D)} \lesssim \| \lambda_H \|.
$$
It remains to bound the $L^2$-norm of $\lambda_H$. Here we have 
\begin{align*}
\| \lambda_H \|^2 &= a ( w\LOD ,\lambda_H )
= a ( w\LOD - w,\lambda_H ) + \langle - \triangle w + V_1 w , \lambda_H \rangle \\ 
&\lesssim \| w\LOD - w \|_{H^1(\D)} \hspace{2pt} \| \lambda_H \|_{H^1(\D)}  + \| w \|_{H^2(\D)} \| \lambda_H \| \\
&\lesssim H \| w \|_{H^2(\D)} \hspace{2pt} \| \lambda_H \|_{H^1(\D)} + \| w \|_{H^2(\D)}\hspace{2pt} \| \lambda_H \|.
\end{align*}
To continue with this estimate, we apply the inverse inequality in the (quasi-uniform) finite element space $V_H$ which yields $ \| \lambda_H \|_{H^1(\D)} \le C H^{-1}  \| \lambda_H \| $ (cf. \cite{BrS08}). Consequently,
\begin{align*}
\| \lambda_H \|^2 &\lesssim H \| w \|_{H^2(\D)} \hspace{2pt} \| \lambda_H \|_{H^1(\D)} + \| w \|_{H^2(\D)}\hspace{2pt} \| \lambda_H \| \lesssim \| w \|_{H^2(\D)} \hspace{2pt} \| \lambda_H \|.
\end{align*}
Dividing by $\| \lambda_H \|$ yields $ \| \lambda_H \| \lesssim \| w \|_{H^2(\D)} $ and we can conclude 
$$
\| w\LOD  \|_{H^2(\D)} \lesssim \| \lambda_H \| \lesssim  \| w \|_{H^2(\D)} .
$$
Next, we prove the two inverse estimates. For that let $v\LOD=v_H + \mathcal{Q}(v_H) \in \VLOD$ be arbitrary. From $a( v_H + \mathcal{Q}(v_H) ,\mathcal{Q}(v_H)) = 0$ we conclude
 $$
 \| \mathcal{Q}(v_H)  \|_{H^1(\D)} \lesssim  \|  v_H \|_{H^1(\D)} 
 \qquad \mbox{and} \qquad 
 \| v_H + \mathcal{Q}(v_H)  \|_{H^1(\D)} \lesssim  \|  v_H \|_{H^1(\D)} .
 $$
 The $H^1$-stability of the $L^2$-projection in $V_H$ on quasi-uniform meshes (cf. \cite{BaY14}) implies 
  $$
 \|  v_H \|_{H^1(\D)} =
  \|  P_H (v_H + \mathcal{Q}(v_H)) \|_{H^1(\D)}  \lesssim  \| v_H + \mathcal{Q}(v_H)  \|_{H^1(\D)}.
 $$ 
  We conclude with the standard inverse estimate in finite element spaces
\begin{align*}
\| v_H + \mathcal{Q}(v_H)  \|_{H^1(\D)}^2 &\lesssim  \|  v_H \|_{H^1(\D)}^2  \lesssim  H^{-2} \|  v_H \|_{L^2(\D)}^2 = H^{-2} (  v_H ,  v_H + \mathcal{Q}(v_H))_{L^2(\D)}\\
&= H^{-2} \| v_H + \mathcal{Q}(v_H)\|_{L^2(\D)}^2 + H^{-2} ( \mathcal{Q}(v_H) ,  v_H + \mathcal{Q}(v_H))_{L^2(\D)}\\
&\lesssim H^{-2} \| v_H + \mathcal{Q}(v_H)\|_{L^2(\D)}^2 + \tfrac{\eps}{H^{2}} \| \mathcal{Q}(v_H) \|_{L^2(\D)}^2 + \tfrac{1}{\eps H^{2}} \| v_H + \mathcal{Q}(v_H) \|_{L^2(\D)}^2 \\
&\lesssim (1+\eps^{-1}) H^{-2} \| v_H + \mathcal{Q}(v_H)\|_{L^2(\D)}^2 + \tfrac{\eps}{H^{2}} H^2 \| \mathcal{Q}(v_H) \|_{H^1(\D)}^2  \\
&\lesssim (1+\eps^{-1}) H^{-2} \| v_H + \mathcal{Q}(v_H)\|_{L^2(\D)}^2 + \eps  \|  v_H + \mathcal{Q}(v_H) \|_{H^1(\D)}^2,
\end{align*}
where $\eps>0$ is a sufficiently small parameter resulting from the application of Young's inequality. Hence, we have the inverse estimate
\begin{align*}
\| v_H + \mathcal{Q}(v_H)  \|_{H^1(\D)} &\lesssim  \tfrac{1+\eps^{-1}}{1-\eps} H^{-1} \| v_H + \mathcal{Q}(v_H)\|_{L^2(\D)}.
\end{align*}
For the $L^{\infty}$-inverse estimate, we note that $v_H + \mathcal{Q}(v_H) \in H^2(\D)$ because if $\lambda_H \in V_H$ is defined by $\langle \lambda_H , q_H \rangle = a( v_H + \mathcal{Q}(v_H) , q_H)$ for all $q_H \in V_H$, then  $v_H + \mathcal{Q}(v_H)  \in H^1_0(\D)$ solves the regular boundary value problem
$$
a( v_H + \mathcal{Q}(v_H)  , v ) = \langle \lambda_H , v \rangle \qquad \mbox{for all } v \in H^1_0(\D).
$$
We conclude with elliptic regularity theory that
\begin{align*}
\| v_H + \mathcal{Q}(v_H)  \|_{L^{\infty}(\D)} \lesssim
\| v_H + \mathcal{Q}(v_H)  \|_{H^2(\D)}
 \lesssim \| \lambda_H \|_{L^2(\D)}
 \lesssim  H^{-1} \| v_H + \mathcal{Q}(v_H)  \|_{H^{1}(\D)}.
\end{align*}
\end{proof}

We are now ready to prove the superconvergence for the $L^\infty(L^2)$-error.
\begin{lemma}(Optimal $L^2$-error estimates)
\label{lemma-op-L2-est}
Assume \ref{A1}-\ref{splitting-potential}.  Then there is a solution $u^{n}\LOD \in\VLOD$ to the modified Crank--Nicolson method \eqref{FullyDiscrete}, with uniform $L^{\infty}$-bounds, i.e., there exists a constant $C>0$ (independent of $\tau$ and $H$) such that
\begin{align}
\label{Linfty-un-LOD}
\max_{0 \le n \le N} \| u^{n}\LOD \|_{L^{\infty}(\D)} \le C
\end{align}
and the $L^\infty(L^2)$-error between $u^{n}\LOD$ and the exact solution $u$ at time $t_n$ converges with
 $$ \sup_{0\leq n\leq N}\|u^{n}\LOD-u(\cdot,t_n)\|\lesssim 
 \tau^2+H^4.
 $$
 \end{lemma}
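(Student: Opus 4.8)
Here is the plan of proof.

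\medskip

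\noindent The plan is to insert the semi-discrete Crank--Nicolson solution $u^n\in H^1_0(\D)$ from Lemma~\ref{lemma-semidiscrete-CN} and its $a(\cdot,\cdot)$-Ritz projection $A\LOD(u^n)$, and to split
$$
u^{n}\LOD - u(\cdot,t_n) = \underbrace{\big(u^{n}\LOD - A\LOD(u^n)\big)}_{=:\,\theta^n} \;-\; \underbrace{\big(u^n - A\LOD(u^n)\big)}_{=:\,\rho^n} \;+\; \underbrace{\big(u^n - u(\cdot,t_n)\big)}_{=:\,e^n_\CN}.
$$
By Lemma~\ref{lemma-semidiscrete-CN} one has $\|e^n_\CN\|\lesssim\tau^2$, and by Conclusion~\ref{conclusion-ALOD-errors} one has $\|\rho^n\|\lesssim H^4$ together with $\|D_\tau\rho^n\|\lesssim\tau^2+H^4$, using in addition the linearity of $A\LOD$. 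Hence everything reduces to showing $\|\theta^n\|\lesssim\tau^2+H^4$, and here $\theta^0=0$ because $u^0\LOD=A\LOD(u^0)$ by \eqref{LOD-approx-initial-value}. First I would derive the error equation for $\theta^n$: subtracting \eqref{semi-disc-cnd-problem} (which holds for all $v\in H^1_0(\D)$, in particular for $v\in\VLOD$) from \eqref{FullyDiscrete}, inserting $u^{n}\LOD-u^n=\theta^n-\rho^n$, splitting $V=\Va+\Vb$ and using the $a(\cdot,\cdot)$-orthogonality $a(\rho^{n+1/2},v)=0$ for $v\in\VLOD$, one obtains for all $v\in\VLOD$
$$
\ci\langle D_\tau\theta^n,v\rangle = a(\theta^{n+1/2},v) + \langle\Vb\theta^{n+1/2},v\rangle + \ci\langle D_\tau\rho^n,v\rangle - \langle\Vb\rho^{n+1/2},v\rangle + \beta\langle\mathcal{N}^n,v\rangle,
$$
with $\mathcal{N}^n := \tfrac12 P\LOD(|u^{n+1}\LOD|^2+|u^{n}\LOD|^2)\,u^{n+1/2}\LOD - \tfrac12(|u^{n+1}|^2+|u^{n}|^2)\,u^{n+1/2}$ collecting the nonlinear defect.

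\medskip

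\noindent Next I would establish the uniform bound \eqref{Linfty-un-LOD}. Mass conservation gives $\|u^{n}\LOD\|=\|u^0\LOD\|\lesssim1$, and since $\Vb,\beta\ge0$ conservation of the modified energy $E\LOD$ gives $\|\nabla u^{n}\LOD\|^2\le E\LOD[u^0\LOD]\lesssim1$, so $\|u^{n}\LOD\|_{H^1(\D)}\lesssim1$ uniformly in $n,\tau,H$; for $d=1$ this already yields \eqref{Linfty-un-LOD} by Sobolev embedding. For $d=2,3$ I would run an induction on $n$: assuming $\|\theta^m\|\lesssim\tau^2+H^4$ for $m\le n$, one gets $\|u^m\LOD\|_{L^\infty(\D)}\lesssim1$ from $\|A\LOD(u^m)\|_{H^2(\D)}\lesssim\|u^m\|_{H^2(\D)}\lesssim1$ (Lemma~\ref{H2LOD} together with Lemma~\ref{timedsc-regularity} and Lemma~\ref{optimal-H2-est-lemma}) and the inverse estimates \eqref{inverse-inequalities-LOD-space}, and the error estimate below reproduces the bound for $m=n+1$ once $\tau$ is small; the selected solution is the one lying close to $A\LOD(u^{n+1})$.

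\medskip

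\noindent The core step is to test the $\theta$-identity with $v=\theta^{n+1/2}$ and take imaginary parts. Since $a(\cdot,\cdot)$ and $\langle\Vb\cdot,\cdot\rangle$ are real, those terms drop out, and with the telescoping identity $\Im\big(\ci\langle D_\tau\theta^n,\theta^{n+1/2}\rangle\big)=\tfrac{1}{2\tau}\big(\|\theta^{n+1}\|^2-\|\theta^n\|^2\big)$ one arrives at
$$
\frac{\|\theta^{n+1}\|^2-\|\theta^n\|^2}{2\tau} = \Re\langle D_\tau\rho^n,\theta^{n+1/2}\rangle - \Im\langle\Vb\rho^{n+1/2},\theta^{n+1/2}\rangle + \beta\,\Im\langle\mathcal{N}^n,\theta^{n+1/2}\rangle.
$$
The first two terms are $\lesssim(\tau^2+H^4)^2+\|\theta^{n+1}\|^2+\|\theta^n\|^2$ by Cauchy--Schwarz, Young and the bounds on $\rho^n,D_\tau\rho^n$. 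For $\mathcal{N}^n$ I would write, with $G\LOD^n:=|u^{n+1}\LOD|^2+|u^{n}\LOD|^2$ and $G^n:=|u^{n+1}|^2+|u^{n}|^2$,
$$
P\LOD(G\LOD^n)u^{n+1/2}\LOD - G^n u^{n+1/2} = P\LOD(G\LOD^n-G^n)\,u^{n+1/2}\LOD + P\LOD(G^n)\,(u^{n+1/2}\LOD-u^{n+1/2}) + \big(P\LOD(G^n)-G^n\big)u^{n+1/2},
$$
and estimate each summand paired with $\theta^{n+1/2}$ using $L^2$-stability of $P\LOD$, the uniform $L^\infty$- (resp.\ $L^4/L^6$-) bounds on $u^{n}\LOD$ and $u^n$, the regularity $\|u^n\|_{H^2(\D)}\lesssim1$, the identity $u^{k}\LOD-u^k=\theta^k-\rho^k$, the inverse estimates \eqref{inverse-inequalities-LOD-space}, and for the last summand the crude projection bound $\|G^n-P\LOD(G^n)\|\lesssim H^2\|G^n\|_{H^2(\D)}\lesssim H^2$ (valid since $G^n\in H^1_0(\D)\cap H^2(\D)$, even though $-\triangle G^n+\Va G^n$ need not lie in $H^1_0(\D)$); the purely real diagonal contribution $\langle P\LOD(G^n)\theta^{n+1/2},\theta^{n+1/2}\rangle$ drops out of the imaginary part. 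This yields $|\beta\,\Im\langle\mathcal{N}^n,\theta^{n+1/2}\rangle|\lesssim\|\theta^{n+1}\|^2+\|\theta^n\|^2+(\tau^2+H^4)^2$ after absorbing terms of the form $H^2\|\theta^{n+1/2}\|$ and $H^4\|\theta^{n+1/2}\|$ via Young's inequality. Summing the resulting recursion over $n$, using $\theta^0=0$ and $N\tau=T$, the discrete Gr\"onwall inequality gives $\|\theta^n\|\lesssim\tau^2+H^4$, which simultaneously closes the induction for \eqref{Linfty-un-LOD} and proves the $L^\infty(L^2)$-estimate.

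\medskip

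\noindent The main obstacle is the coupling between the nonlinear defect $\mathcal{N}^n$ and the $L^\infty$-bound \eqref{Linfty-un-LOD}: controlling the cubic term requires a uniform $L^\infty$-bound on $u^{n}\LOD$, yet that bound is itself obtained from the error estimate via inverse estimates, so the two must be bootstrapped together; in dimension $d=3$ this is exactly where the $H^2$-regularity of the LOD projection (Lemma~\ref{H2LOD}) is indispensable, and where the improved estimate $\|D_\tau u^{n-1/2}-A\LOD(D_\tau u^{n-1/2})\|\lesssim H^4$ of Conclusion~\ref{conclusion-ALOD-errors}, resting on the optimal $L^\infty(H^2)$ estimates of Lemma~\ref{optimal-H2-est-lemma}, is what prevents the rate from degrading below $H^4$. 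A secondary point is that the $L^2$-projection $P\LOD$ of the smooth quantity $G^n$ converges only with order $H^2$; this suffices because that defect always appears paired against the error $\theta^{n+1/2}$, so Young's inequality upgrades $H^2\cdot\|\theta^{n+1/2}\|$ to the admissible $H^4+\|\theta^{n+1/2}\|^2$.
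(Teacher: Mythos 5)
Your overall architecture --- the three-term splitting $u^{n}\LOD-u(\cdot,t_n)=\theta^n-\rho^n+e^n_{\CN}$ through the $a(\cdot,\cdot)$-Ritz projection $A\LOD(u^n)$ of the semi-discrete solution, the imaginary-part energy recursion for $\theta^n$, and Gr\"onwall --- is the same as the paper's. But the quantitative core does not close as written. The ``crude projection bound'' $\|G^n-P\LOD(G^n)\|\lesssim H^2$ is \emph{not} sufficient, and the claimed upgrade via Young's inequality is an arithmetic slip: from $H^2\,\|\theta^{n+1/2}\|\le\tfrac12 H^4+\tfrac12\|\theta^{n+1/2}\|^2$ the recursion for the \emph{squared} norm becomes $\|\theta^{n+1}\|^2-\|\theta^n\|^2\lesssim\tau\,(H^4+\|\theta^{n+1}\|^2+\|\theta^n\|^2)$, and Gr\"onwall then yields $\|\theta^n\|^2\lesssim H^4$, i.e.\ $\|\theta^n\|\lesssim H^2$ --- two orders short of the claim. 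To conclude $\|\theta^n\|\lesssim H^4$ the defect multiplying $\|\theta^{n+1/2}\|$ must itself be $O(H^4)$ so that its square contributes $H^8$. This is exactly why the paper proves $\|(\mathrm{Id}-P\LOD)(|u^{n+1}|^2+|u^n|^2)\|\lesssim H^4\,\|(-\triangle+\Va)(|u^{n+1}|^2+|u^n|^2)\|_{H^2(\D)}$, i.e.\ it invokes the LOD super-approximation property for the density itself, which in turn requires the $\tau$-independent control of $\triangle^2(|u^n|^2)$ obtained from Lemma \ref{timedsc-regularity}. Without this ingredient your argument delivers only $O(\tau^2+H^2)$.

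A second, subtler gap concerns the uniform bound \eqref{Linfty-un-LOD}. Your induction feeds $\|\theta^m\|\lesssim\tau^2+H^4$ into the inverse estimate \eqref{inverse-inequalities-LOD-space}, which gives $\|\theta^m\|_{L^\infty(\D)}\lesssim H^{-2}(\tau^2+H^4)=H^{-2}\tau^2+H^2$; this is bounded only under a coupling condition $\tau\lesssim H$, which the lemma does not assume. The paper avoids this in two ways: it works with a truncated nonlinearity \eqref{TruncatedLOD} so that the $L^\infty$-control needed for the cubic term is available a priori (and the truncation is removed a posteriori), and it estimates the term $\langle D_\tau\rho^k,\theta^{k+1/2}\rangle$ by summation by parts so as to use only the $\tau$-free bound $\|D_\tau u^{k-1/2}-A\LOD(D_\tau u^{k-1/2})\|\lesssim H^4$ from Conclusion \ref{conclusion-ALOD-errors}; the resulting estimate \eqref{final-estimate-with-M} is a pure $C(M)H^4$, so $H^{-2}\|\theta^n\|\lesssim H^2$ without any restriction on $\tau$. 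Your direct Cauchy--Schwarz treatment of $\langle D_\tau\rho^n,\theta^{n+1/2}\rangle$ with the bound $\|D_\tau\rho^n\|\lesssim\tau^2+H^4$ reinjects a $\tau^2$ into $\|\theta^n\|$ and thereby into the $L^\infty$-bootstrap; you cite the half-step estimate in your closing remarks but never explain how $D_\tau u^{k-1/2}$ would actually enter the recursion --- the summation-by-parts identity is the missing step.
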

\begin{proof}	
In the following we denote by $u^n$ the solution to the semi-discrete Crank--Nicolson method \eqref{semi-disc-cnd-problem}. As in the proof of existence we introduce an auxiliary problem with a truncated nonlinearity. The reason for this is that for the truncated problem the necessary $L^\infty$-bounds are available. Once this error estimate is obtained it is possible to show that for sufficiently small $H$ the truncation engenders no change. Given a sufficiently large constant $M> 1 + \sup_{0 \le n \le N} \| u^n \|_{L^{\infty}(\D)}^2 $, the truncated problem reads: find $u^{n+1,(M)}\LOD \in V\LOD$ with\\
\begin{eqnarray}\label{TruncatedLOD}
	\lefteqn{\ci \big\langle D_\tau u^{n,(M)}\LOD,v\big\rangle = \big\langle\nabla \uLOD^{n+1/2,(M)},\nabla v \big\rangle + \big\langle V\uLOD^{n+1/2,(M)},v\big\rangle}  \\
\nonumber	&\enspace& \quad+ \beta\langle\frac{P\LOD\big(\chi_M(|u^{n+1,(M)}\LOD|^2)+ \chi_M(|u^{n,(M)}\LOD|^2)\big)}{2} \frac{\chi_M(u^{n+1,(M)}\LOD) + \chi_M(u^{n,(M)}\LOD)}{2}  ,v\rangle
\end{eqnarray}
for all $v \in V\LOD$, where  $\chi_M : \C \rightarrow \{ z \in \C |\hspace{2pt} |z|\le M \}$ is the Lipschitz-continuous truncation function given by
\begin{align*}
\chi_M(z) := \min\{ \tfrac{M}{|z|} , 1 \} \hspace{2pt} z.
\end{align*}
Note that the Lipschitz constant is $2$, i.e.,
\begin{align}
\label{lipschitz-continuity-chiM}
| \chi_M(z) - \chi_M(y) | \le 2 |x - y| \qquad \mbox{for all } x,y\in \C.
\end{align}
Also observe that $|\chi_M(z)|\le M$ and $\chi_M(z)=z$ for all $z \in \C$ with $|z|\le M$. For real values $x\in \R$ we have $\chi_M(x)=M$ if $x\ge M$.
Existence of truncated solutions $u^{n,(M)}$ follows analogously to the case without truncation. Thanks to previous optimal $L^\infty(L^2)$ estimates of the semi-discrete problem \eqref{semi-disc-cnd-problem} (cf. Lemma \ref{lemma-semidiscrete-CN}), it will suffice to prove an optimal estimate for $\|A\LOD(u^n-u^{n,(M)}\LOD)\|$. This is made clear by splitting the error into:
\begin{align}
\nonumber\|u(t_n)-u^{n,(M)}\LOD\|&\leq \|u(t_n)-u^n\| +\|u^n-u^{n,(M)}\LOD\| \\
\nonumber&\leq C\tau^2+ \| u^n-A\LOD(u^n)\| + \|A\LOD(u^n)-u^{n,(M)}\LOD\|  \\
\label{initial-error-splitting}& \lesssim \tau^2+CH^4+\|A\LOD(u^n)-u^{n,(M)}\LOD\|,
\end{align}
where Conclusion \ref{conclusion-ALOD-errors} was used. We define $e^{n,(M)} := u^n-u^{n,(M)}\LOD$ and its $a$-orthogonal projection onto $V\LOD$ shall be denoted by $e^{n,(M)}\LOD := A\LOD(u^n)-u^{n,(M)}\LOD$. Subtracting \eqref{TruncatedLOD} from \eqref{semi-disc-cnd-problem} yields
\begin{eqnarray}\label{errorpdelod}
\ci \langle D_\tau e^{n,(M)},v \rangle = a(A\LOD(u^{n+1/2})-u^{n+1/2,(M)}\LOD,v)+ \langle V_2 \hspace{2pt}e^{n+1/2,(M)} + \beta \hspace{2pt}e^{n+1/2,(M)}_{\beta},v\rangle
\end{eqnarray}
for all $v\in V\LOD$ where 
\begin{eqnarray*}
\lefteqn{e^{n,(M)}_\beta := \frac{|u^{n+1}|^2+|u^n|^2}{2}u^{n+1/2} }\\
&\enspace& \qquad -P\LOD\bigg(\frac{\chi_M(|u\LOD^{n+1,(M)}|^2)+\chi_M(|u\LOD^{n,(M)}|^2)}{2}\bigg) \frac{\chi_M(u^{n+1,(M)}\LOD) + \chi_M(u^{n,(M)}\LOD)}{2} .
\end{eqnarray*}
Taking $v=e^{n+1/2,(M)}\LOD  = \tfrac{1}{2}(e^{n,(M)}\LOD + e^{n+1,(M)}\LOD)$ in \eqref{errorpdelod} and considering the imaginary part yields a recursion formula for the error: 
\begin{eqnarray} \label{Error-recursion}
\frac{\|e^{n+1,(M)}\LOD\|^2-\|e^{n,(M)}\LOD\|^2}{2\tau}= \Im \bigg( \langle \V \hspace{2pt} e^{n+1/2,(M)},e^{n+1/2,(M)}\LOD\rangle + \beta\langle e^{n,(M)}_\beta,e^{n+1/2,(M)}\LOD \rangle \bigg) \\
- \Re \langle D_\tau(u^n-A\LOD(u^n)),e^{n+1/2,(M)}\LOD\rangle . \nonumber 
\end{eqnarray}
Our first goal will be to bound $|\langle e^{n,(M)}_\beta,e^{n+1/2,(M)}\LOD \rangle|$ in terms of $H^8, \|e^{n,(M)}\LOD\|^2$ and $\|e^{n+1,(M)}\LOD\|^2$. For that we split $e^{n,(M)}_{\beta} = \tfrac{1}{4}e^{n,(M)}_{\beta,1} + \tfrac{1}{4}e^{n,(M)}_{\beta,2} + \tfrac{1}{4}e^{n,(M)}_{\beta,3}$, where
\begin{align*}
e^{n,(M)}_{\beta,1} &:=   P\LOD\left( |u^{n+1}|^2+|u^n|^2  - \chi_M(|u\LOD^{n+1,(M)}|^2) - \chi_M(|u\LOD^{n,(M)}|^2)\right) \\
&\hspace{60pt}\left( \chi_M(u^{n+1,(M)}\LOD) + \chi_M(u^{n,(M)}\LOD)\right); \\
e^{n,(M)}_{\beta,2} &:= (\mbox{\rm Id} -P\LOD)  \left(|u^{n+1}|^2+|u^n|^2 \right) \left( \chi_M(u^{n+1,(M)}\LOD) + \chi_M(u^{n,(M)}\LOD)\right);\\
e^{n,(M)}_{\beta,3} &:= \left(|u^{n+1}|^2+|u^n|^2 \right) 
\left(u^{n+1}+ u^n  - \chi_M(u^{n+1,(M)}\LOD) - \chi_M(u^{n,(M)}\LOD)\right).
\end{align*}
Estimating the various terms, we obtain
\begin{eqnarray*}
\lefteqn{|\langle e^{n,(M)}_{\beta,1} , e^{n+1/2,(M)}\LOD \rangle |} \\ 
&\lesssim&
M \| P\LOD\left( |u^{n+1}|^2+|u^n|^2  - \chi_M(|u\LOD^{n+1,(M)}|^2) - \chi_M(|u\LOD^{n,(M)}|^2)\right) \| \hspace{2pt} \| e^{n+1/2,(M)}\LOD \| \\
&\lesssim& M \| |u^{n+1}|^2+|u^n|^2  - \chi_M(|u\LOD^{n+1,(M)}|^2) - \chi_M(|u\LOD^{n,(M)}|^2) \| \hspace{2pt} \| e^{n+1/2,(M)}\LOD \|  \\
&\lesssim& M^{3/2} \left( \| u^{n}  - u\LOD^{n,(M)} \| +  \| u^{n+1}  - u\LOD^{n+1,(M)} \| \right) \hspace{2pt} \| e^{n+1/2,(M)}\LOD \|,
\end{eqnarray*}
where in the last step we used that pointwise
\begin{eqnarray*}
\lefteqn{\left|  \chi_M(|u\LOD^{n,(M)}|^2) - |u^n|^2 \right|}\\
&\le& \begin{cases}
| u\LOD^{n,(M)} - u^n | \hspace{2pt} (|u^n| + |u\LOD^{n,(M)}|)
\le 2 \sqrt{M} | u\LOD^{n,(M)} - u^n| 
&\mbox{if } |u\LOD^{n,(M)}|^2 \le M;\\
M - |u^n|^2 \le 2 \sqrt{M}\hspace{2pt} (\sqrt{M} - |u^n|)
\le 2 \sqrt{M} |u\LOD^{n,(M)} - u^n |  &\mbox{if } |u\LOD^{n,(M)}|^2 > M.
\end{cases}
\end{eqnarray*}
For the second term we have
\begin{eqnarray*}
\lefteqn{|\langle e^{n,(M)}_{\beta,2} , e^{n+1/2,(M)}\LOD \rangle |} \\ 
&\lesssim&
M \| (\mbox{\rm Id} -P\LOD)  \left(|u^{n+1}|^2+|u^n|^2 \right) \| \hspace{2pt} \| e^{n+1/2,(M)}\LOD \| \\
&\lesssim& M H^4 \| (-\triangle + V_1) \left(|u^{n+1}|^2+|u^n|^2 \right) \|_{H^2(\D)} \hspace{2pt} \| e^{n+1/2,(M)}\LOD \|,
\end{eqnarray*}
where it remains to bound the term $\| \triangle \left(|u^{n+1}|^2+|u^n|^2 \right) \|_{H^2(\D)} \lesssim \| \triangle^2 \left(|u^{n+1}|^2+|u^n|^2 \right) \|$. Using $ \triangle^2 |u^n|^2 = 6 |\triangle u^n|^2 + 8 \Re\left( \nabla u^n \hspace{2pt} \overline{\nabla \triangle u^n} \right) + 2 \Re \left( u^n \overline{\triangle^2 u^n} \right)$ and the estimates
\begin{align*}
	\|\nabla u^n \nabla \triangle u^n\| &\le \|\nabla u^n\|_{L^4(\D)}^2\|\nabla\triangle u^n\|_{L^4(\D)}^2 \lesssim \|u^n\|_{H^2(\D)}^2 \|\triangle u^n\|_{H^2(\D)}^2; \\
\||\triangle u^n|^2\| &\leq \|\triangle u^n\|_{L^\infty(\D)} \|\triangle u^n\| \lesssim 
\|\triangle u^n\|_{H^2(\D)}^2 \qquad \mbox{and}\\
\|u^n\triangle^2 u^n\| &\lesssim \|u^n\|_{L^\infty(\D)}\|\triangle u^n\|_{H^2(\D)}
\end{align*}
we see with Lemma \ref{lemma-semidiscrete-CN} and Lemma \ref{timedsc-regularity} that $\| \triangle \left(|u^{n+1}|^2+|u^n|^2 \right) \|_{H^2(\D)} \lesssim 1$ and hence $|\langle e^{n,(M)}_{\beta,2} , e^{n+1/2,(M)}\LOD \rangle | \lesssim M H^4 \| e^{n+1/2,(M)}\LOD \|$.

It remains to bound $|\langle e^{n,(M)}_{\beta,3} , e^{n+1/2,(M)}\LOD \rangle |$. Here we can readily use the $L^{\infty}$-bounds for $u^n$ (with $\chi_M(u^n)=u^n$ for all $n$) together with the Lipschitz-continuity \eqref{lipschitz-continuity-chiM} to conclude that
\begin{eqnarray*}
|\langle e^{n,(M)}_{\beta,3} , e^{n+1/2,(M)}\LOD \rangle | 
&\lesssim& \left( \| u^n  - u^{n,(M)}\LOD \| + \| u^{n+1}  - u^{n+1,(M)}\LOD \| \right)
 \| e^{n+1/2,(M)}\LOD \|.
\end{eqnarray*}
Combing the estimates for $e^{n,(M)}_{\beta,1}$, $e^{n,(M)}_{\beta,2}$ and $e^{n,(M)}_{\beta,3}$, we have
\begin{eqnarray}
\nonumber|\langle e^{n,(M)}_{\beta} , e^{n+1/2,(M)}\LOD \rangle | 
&\lesssim& M^{3/2} \left( \| e^{n,(M)} \| + \|e^{n+1,(M)} \| \right)
 \| e^{n+1/2,(M)}\LOD \| + M H^4  \| e^{n+1/2,(M)}\LOD \| \\
\label{est-enMbeta} &\le& C(M) \left( H^8 +  \| e^{n,(M)}\LOD \|^2 + \| e^{n+1,(M)}\LOD \|^2  \right)
\end{eqnarray}
for some constant $C(M)=\mathcal{O}(M^{3/2})$. Recalling the initial error recursion formula \eqref{Error-recursion}, we conclude with \eqref{est-enMbeta} that
\begin{eqnarray}
\label{ErrorRecursion2}\lefteqn{\frac{\|e^{n+1,(M)}\LOD\|^2-\|e^{n,(M)}\LOD\|^2}{2\tau}} \\ 
\nonumber&\leq&  \|\V\|_{L^{\infty}(\D)} \|e^{n+1/2,(M)}\| \hspace{2pt} \|e^{n+1/2,(M)}\LOD\|+ C(M) \left( H^8 +  \| e^{n,(M)}\LOD \|^2 + \| e^{n+1,(M)}\LOD \|^2  \right)\\
\nonumber&\enspace&\qquad -\Re \langle D_\tau(u^n-A\LOD(u^n)),e^{n+1/2,(M)}\LOD\rangle \\
\nonumber&\leq& C(M) \left( H^8 +  \| e^{n,(M)}\LOD \|^2 + \| e^{n+1,(M)}\LOD \|^2  \right) -\Re \langle D_\tau(u^n-A\LOD(u^n)),e^{n+1/2,(M)}\LOD\rangle.
\end{eqnarray}
It follows from Lemma \ref{timedsc-regularity} and Conclusion \ref{conclusion-ALOD-errors} that $\triangle D_\tau u^n \in H^2(\D)\cap H^1_0(\D)$ and the estimate $\|D_\tau u^n-A\LOD(D_\tau u^n)\|\lesssim \tau^2 + H^4$. However, in order to avoid unnecessary coupling conditions between the mesh size and the time step size, we cannot afford a $\tau^2$-dependency at this point. Instead we only want to use the estimate $\| D_{\tau} u^{n-1/2} - A\LOD(D_{\tau} u^{n-1/2}) \| \lesssim H^4$ proved in Conclusion \ref{conclusion-ALOD-errors}. 
In order to exploit it, we sum  up recursion \eqref{ErrorRecursion2} to find:
 \begin{equation}\label{LastStep}
 \|e^{n+1,(M)}\LOD\|^2 \le C(M) \left( H^8+ \tau \sum_{k=0}^n\|e^{k,(M)}\LOD\|^2 \right)+ \tau |\sum_{k=0}^n \langle D_\tau(u^n-A\LOD(u^n)),e^{n+1/2,(M)}\LOD\rangle |.
 \end{equation} 
The idea is now to reformulate the expression above in such a way that we can use our optimal bounds for  $\| D_{\tau} u^{n-1/2} - A\LOD(D_{\tau} u^{n-1/2}) \|$ to estimate the last term. To this end we will use the following summation formula:
\begin{align*}
\sum_{k=0}^n D[a^k]b^{k+1/2} &=\frac{1}{2}\big(D[a^n]b^{n+1}+D[a^0]b^0\big)+\sum_{k=1}^{n}D[a^{k-1/2}]b^k.
\end{align*}

When applied to our sum, the formula yields
\begin{eqnarray*}
\lefteqn{\tau |\sum_{k=0}^n \langle D_\tau u^k- A(D_\tau u^k),e\LOD^{k+1/2,(M)}\rangle|}  \\
& \leq& \frac{\tau}{2}|\langle D_\tau u^0-A(D_\tau u^0),e\LOD^{0,(M)}\rangle| +\frac{\tau}{2} |\langle D_\tau u^{n}-A(D_\tau u^{n}),e\LOD^{n+1,(M)}\rangle| \\
&\enspace&\qquad +\tau \left|\sum_{k=1}^n\langle D_\tau u^{k-1/2}-A\LOD(D_\tau u^{k-1/2}),e\LOD^{k,(M)}\rangle\right| \\
&\overset{\eqref{bounds-ALOD-un}}{\lesssim}& H^8+\tau^2 \|e\LOD^{0,(M)}\|^2+\tau^2\|e\LOD^{n+1,(M)}\|^2 + \tau\sum_{k=1}^n H^4 \|e^{k,(M)}\LOD\| \\
&\lesssim& H^8+\tau^2\|e\LOD^{n+1,(M)}\|^2 + \tau\sum_{k=1}^n  \|e^{k,(M)}\LOD\|^2.
\end{eqnarray*}
With $0<(1-\tau^2)^{-1} \lesssim 1$, estimate \eqref{LastStep} thus becomes
\begin{equation}
\|e^{n+1,(M)}\LOD\|^2 \le C(M) \left( H^8+\tau\sum^n_{k=0}\|e^{k,(M)}\LOD\|^2 \right).
\end{equation}
Gr\"onwall's inequality now readily gives us the estimate
\begin{equation}
\label{final-estimate-with-M}
\|e^{n+1,(M)}\LOD\| \le C(M) H^4
\end{equation}
for some new constant $C(M)$ that depends exponentially on $M$.

To conclude the argument, we need to show that $M$ can be selected independent of $H$ and $\tau$, so that $u^{n}\LOD = u^{n,(M)}\LOD$.
For that we can use the inverse inequalities in Lemma \ref{H2LOD} to show with the following calculation that $\|u^{n,(M)}\LOD\|_{L^\infty(\D)}$ and $\|u^{n,(M)}\LOD\|_{L^\infty(\D)}^2$ are bounded by a constant less than $M$ for sufficiently small $H$. We have
\begin{eqnarray*}
\|u^{n,(M)}\LOD\|_{L^\infty(\D)} &\leq& \|u^{n,(M)}\LOD-A\LOD(u^n)\|_{L^\infty(\D)}+\|A\LOD(u^n)\|_{L^\infty(\D)}\\
&\overset{\eqref{inverse-inequalities-LOD-space}}{\leq}& H^{-2}\|u^{n,(M)}\LOD-A\LOD(u^n)\|+\|u^n\|_{H^2} \\
&\leq & C(M) H^{-2}e^{n,(M)}\LOD+ C_0 \\
&\leq & C(M)H^2+C_0.
\end{eqnarray*} 
Hence, if $M$ is selected so that $M \ge (1+C_0)^2$, then for any $H \le C(M)^{-1/2}$ we have $\|u^{n,(M)}\LOD\|_{L^\infty(\D)} \le 1+C_0 < \sqrt{M} < M$ and $\|u^{n,(M)}\LOD\|_{L^\infty(\D)}^2 \le (1+C_0)^2 < M$. Consequently, the truncation in problem \eqref{TruncatedLOD} can be dropped and we have $u^{n,(M)}\LOD = u^{n}\LOD$ for any fixed $M \ge (1+C_0)^2$ and any sufficiently small $H$. The truncated problem coincides with the original problem and we have from \eqref{final-estimate-with-M} that $\|u^{n}\LOD - A\LOD(u^n) \|\lesssim H^4$. 
Together with \eqref{initial-error-splitting}, this finishes the proof.
\end{proof}

With the optimal a priori error estimate available, we can now draw a conclusion on the accuracy of the exact energy.
\begin{corollary}\label{ExactEnergyThm}
Assume \ref{A1}-\ref{splitting-potential} and let $u^{n}\LOD \in\VLOD$ denote Crank--Nicolson approximation with uniform $L^{\infty}$-bounds appearing in Lemma \ref{lemma-op-L2-est}. Then the conserved energy  $E\LOD[u\LOD]$ differs from $E[u\LOD]$ by at most of $\mathcal{O}(H^8)$ and $E[u\LOD]$ itself differs at most of  $\mathcal{O}(H^6)$ from the exact energy. To be precise, we have
\begin{align*}
\left| \hspace{2pt} E\LOD[u\LOD^n] - E[u\LOD^n] \hspace{2pt}  \right|
\lesssim H^8 \qquad \mbox{and}
\qquad
\left| \hspace{2pt} E[u\LOD^n] - E[u(t_n)] \hspace{2pt}  \right|
\lesssim H^6.
\end{align*}
\end{corollary}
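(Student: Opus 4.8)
The plan is to establish the two bounds in turn, obtaining the second from the first together with the conservation of the modified energy $E\LOD$ and the initial super-approximation estimate of Theorem~\ref{theorem-superapproximation}.

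\emph{Step 1: the bound $|E\LOD[u\LOD^n]-E[u\LOD^n]|\lesssim H^8$.} The functionals $E\LOD$ and $E$ coincide except for their nonlinear contributions, so for every $v\in H^1_0(\D)\cap L^\infty(\D)$, and using that $P\LOD(|v|^2)$ is real,
\[
E\LOD[v]-E[v]=\frac{\beta}{2}\int_{\D}\bigl(P\LOD(|v|^2)\bigr)^2-|v|^4\,dx .
\]
Setting $\eta:=P\LOD(|v|^2)-|v|^2$ and using that $\eta$ is $L^2$-orthogonal to $\VLOD$, hence to $P\LOD(|v|^2)$, the integral evaluates to $-\|\eta\|^2$, whence
\[
E\LOD[v]-E[v]=-\frac{\beta}{2}\,\bigl\|P\LOD(|v|^2)-|v|^2\bigr\|^2 .
\]
It thus suffices to show $\|P\LOD(|u\LOD^n|^2)-|u\LOD^n|^2\|\lesssim H^4$. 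I would obtain this by comparison with the semi-discrete solution $u^n$, via
\[
\bigl\|P\LOD(|u\LOD^n|^2)-|u\LOD^n|^2\bigr\|\le\bigl\|P\LOD(|u^n|^2)-|u^n|^2\bigr\|+\bigl\||u\LOD^n|^2-|u^n|^2\bigr\| ,
\]
where the $L^2$-stability of $P\LOD$ has been used. The first term is $\mathcal{O}(H^4)$ by the LOD approximation property of $P\LOD$ ($L^2$-best approximation in $\VLOD$ combined with the $\mathcal{O}(H^4)$ Ritz estimate of Section~\ref{section-LOD}), since $\|\triangle|u^n|^2 + V_1|u^n|^2\|_{H^2(\D)}\lesssim1$ is precisely the regularity already established in the proof of Lemma~\ref{lemma-op-L2-est} through Lemmas~\ref{lemma-semidiscrete-CN} and~\ref{timedsc-regularity}. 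The second term is $\mathcal{O}(H^4)$ because $\||u\LOD^n|^2-|u^n|^2\|\le\|u\LOD^n+u^n\|_{L^\infty(\D)}\,\|u\LOD^n-u^n\|$, using the uniform $L^\infty$-bounds of Lemma~\ref{lemma-op-L2-est} and the $L^2$-bound $\|u\LOD^n-u^n\|\lesssim H^4$ that follows from Lemma~\ref{lemma-op-L2-est} and Conclusion~\ref{conclusion-ALOD-errors}. Squaring gives $|E\LOD[u\LOD^n]-E[u\LOD^n]|\lesssim H^8$; the case $n=0$ is covered as well, with \eqref{LOD-estimate-notation} and the $H^2$-bound of Lemma~\ref{H2LOD} supplying the required $L^2$- and $L^\infty$-control of $u\LOD^0$.

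\emph{Step 2: the bound $|E[u\LOD^n]-E[u(t_n)]|\lesssim H^6$.} Using the exact energy conservation $E[u(t_n)]=E[u^0]$ and the conservation $E\LOD[u\LOD^n]=E\LOD[u\LOD^0]$ proved earlier in this section, I would decompose
\[
E[u\LOD^n]-E[u(t_n)]=\bigl(E[u\LOD^n]-E\LOD[u\LOD^n]\bigr)+\bigl(E\LOD[u\LOD^0]-E[u\LOD^0]\bigr)+\bigl(E[u\LOD^0]-E[u^0]\bigr).
\]
The first two brackets are $\mathcal{O}(H^8)$ by Step~1 (applied with $n$ and with $n=0$), and the last bracket is $\mathcal{O}(H^6)$ by Theorem~\ref{theorem-superapproximation}; summing yields the claim.

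The step I expect to be the main obstacle is the estimate $\|P\LOD(|u\LOD^n|^2)-|u\LOD^n|^2\|\lesssim H^4$: one cannot invoke the LOD approximation property directly for $|u\LOD^n|^2$, because functions in $\VLOD$ are only $H^2$-regular and hence $(-\triangle+V_1)|u\LOD^n|^2$ need not lie in $H^2(\D)$, which is why the detour through the smooth semi-discrete quantity $|u^n|^2$---affordable thanks to the uniform $L^\infty$-bounds of Lemma~\ref{lemma-op-L2-est}---is essential. Once this estimate is in hand, what remains is bookkeeping with the conservation identities.
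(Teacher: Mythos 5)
Your proof is correct and follows essentially the same route as the paper: the same orthogonality identity reducing $E[u\LOD^n]-E\LOD[u\LOD^n]$ to $\tfrac{\beta}{2}\|(\mathrm{Id}-P\LOD)(|u\LOD^n|^2)\|^2$, the same detour through the smooth semi-discrete quantity $|u^n|^2$ (with its $L^\infty$- and $H^2$-type regularity), and the same conservation-plus-initial-superconvergence bookkeeping for the second bound. The only point to make explicit is that the $\tau$-independent bound $\|u\LOD^n-u^n\|\lesssim H^4$ must be taken from the intermediate estimate \eqref{final-estimate-with-M} combined with Conclusion \ref{conclusion-ALOD-errors}, since the stated conclusion of Lemma \ref{lemma-op-L2-est} only gives $\tau^2+H^4$ and would contaminate the claimed $H^8$ bound with a $\tau^4$ term.
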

\begin{proof}
First, we investigate the difference between the exact energy $E$ of $u\LOD^n$ compared to the preserved modified $E\LOD[u\LOD^n]$ and find that
\begin{eqnarray*}
	\lefteqn{E[u\LOD^n] -E\LOD[u\LOD^n] =  \langle |u^n\LOD|^2,|u^n\LOD|^2\rangle - \langle P\LOD(|u^n\LOD|^2),P\LOD(|u^n\LOD|^2)\rangle} \\
		& =& \langle u^n\LOD|^2, |u^n\LOD|^2-P\LOD(|u^n\LOD|^2)\rangle \\
		& =& \||u^n\LOD|^2-P\LOD(|u^n\LOD|^2)\|^2 \\
		& \leq& \left(\||u^n\LOD|^2-|u^n|^2\|+\||u^n|^2-P\LOD(|u^n|^2)\|+\|P\LOD(|u^n|^2-|u^n\LOD|^2)\| \right)^2 \\
		&\leq& \big( 2\||u^n\LOD|^2-|u^n|^2\|+\||u^n|^2-P\LOD(|u^n|^2)\|\big)^2\\
		&\le& \big( 2\| u^n\LOD - u^n \| \hspace{2pt} \| |u^n\LOD| + |u^n| \|_{L^{\infty}(\D)} + H^4 \| (-\triangle + V_1) |u^n|^2 \|\big)^2\\
		&\overset{\eqref{Linfty-un-LOD},\eqref{final-estimate-with-M}}{\lesssim}& H^8.
\end{eqnarray*}
For the exact energies we only have to estimate the remaining difference $E\LOD[u\LOD^n] - E[u(t_n)]$. Here we have with the conservation properties
\begin{eqnarray*}
\lefteqn{|E\LOD[u\LOD^n]-E[u(t_n)]| = |E\LOD[u\LOD^0]-E[u^0]|} \\
&\leq& |E\LOD[u^0\LOD] -E[u^0\LOD]| +|E[u^0\LOD]-E[u^0]|  \lesssim H^8 + H^6,
\end{eqnarray*}
where we used the energy estimate from Theorem \ref{theorem-superapproximation} in the last step.
\end{proof}
Collecting all the results of this section proves the statements of Theorem \ref{main-theorem-modified-CN}.

\appendix

\section{Existence of solutions to the standard Crank--Nicolson scheme}

In the following we prove the existence result stated in Lemma \ref{lemma-existence-solutions-classiccal-CN}, i.e., under assumptions \ref{A1}-\ref{A3}, there exists at least one solution $u^{\CN n}\LOD \in \VLOD$ to the Crank--Nicolson scheme \eqref{Crank}. 

\begin{proof}[Proof of Lemma \ref{lemma-existence-solutions-classiccal-CN}]
The proof deviates slightly from the existence proof of Lemma \ref{existence-modified-CN-LOD}, mainly because we do not have to take care of a term such as $P\LOD(|u^{n+1}\LOD|^2+|u^{n}\LOD|^2)$ for which we could not guarantee positivity.

Again, let $N_H$ denote the dimension of $V\LOD$ with basis $\{ \phi_{\ell} \hspace{2pt} | 1 \le \ell \le N_H \}$. We note that the proof holds for any finite dimensional space if $P\LOD$ is the $L^2$-projection into that space.\\[0.5em]
For $n \ge 1$ we can express the problem of finding $ u^{\CN n+1}\LOD \in V\LOD$ as
\begin{eqnarray}
\label{CN-LOD-1-algebraic}\nonumber\lefteqn{ 0 = - \tau^{-1} \ci \langle  u^{\CN n+1}\LOD  ,  \phi_{\ell} \rangle 
+ \tau^{-1} \ci \langle  u^{\CN n}\LOD  ,  \phi_{\ell} \rangle
 \hspace{2pt}
+ \langle \nabla u^{\CN n+\tfrac{1}{2}}\LOD , \nabla  \phi_{\ell} \rangle
+ \hspace{2pt} \langle V u^{\CN n+\tfrac{1}{2}}\LOD ,   \phi_{\ell} \rangle
 }\\
&\enspace& \qquad+ \hspace{2pt} \beta \left\langle \frac{|u^{\CN n+1}\LOD|^2+|u^{\CN n}\LOD|^2}{2}u^{\CN n+1/2}\LOD ,  \phi_{\ell}\right\rangle
\hspace{140pt}
\end{eqnarray}
for all $ \phi_{\ell}$. Inductively, we assume that $u^{\CN n}\LOD \in V\LOD $ exists. 
Again, we want to use the variation of the Browder fixed-point theorem \cite[Lemma 4]{Browder1965} to show the existence of $u^{\CN n+1}\LOD\in V\LOD$ (cf. the proof to Lemma \ref{existence-modified-CN-LOD}). For that, we reformulate the problem to a problem on $\mathbb{C}^{N_H}$, by defining a function $g : \C^{N_H} \rightarrow \C^{N_H}$ for $\boldsymbol{\alpha}\in \C^{N_H}$ through
\begin{eqnarray*}
\lefteqn{g_\ell(\boldsymbol{\alpha}) := - \frac{\ci}{\tau} \sum_{m=1}^{N_H} \boldsymbol{\alpha}_m \langle  \phi_m , \phi_\ell \rangle 
 +
\frac{1}{2} \sum_{m=1}^{N_H} \boldsymbol{\alpha}_m \hspace{2pt} 
\langle \nabla \phi_m , \nabla \phi_\ell \rangle
 +
\frac{1}{2} \sum_{m=1}^{N_H} \boldsymbol{\alpha}_m \hspace{2pt} 
\langle V \phi_m  , \phi_\ell \rangle
}\\
&\enspace&
+ \frac{\beta}{4} \langle \left( \left| \sum_{m=1}^{N_H} \boldsymbol{\alpha}_m \phi_m  \right|^2 + |u^{\CN n}\LOD|^2 \right)
\left( \sum_{m=1}^{N_H} \boldsymbol{\alpha}_m \phi_m \hspace{2pt} + u^{\CN n}\LOD
\right)
 ,\phi_\ell \rangle
 +F_\ell,
\end{eqnarray*}
where $F\in \C^{N_H}$ is defined by
\begin{align*}
F_\ell := \frac{1}{2} \langle \nabla u^{\CN n}\LOD ,  \nabla \phi_\ell \rangle
+ \frac{1}{2} \langle V u^{\CN n}\LOD ,  \phi_\ell \rangle
 + \ci \tau^{-1} \langle u^{\CN n}\LOD  , \phi_\ell \rangle.
\end{align*}
To verify existence of $\boldsymbol{\alpha}_0$ with $g(\boldsymbol{\alpha}_0)=0$, we need to show that there exists $K \in \R_{>0}$ such that $\Re( g(\boldsymbol{\alpha}) \cdot \boldsymbol{\alpha} ) > 0$ for all $\boldsymbol{\alpha} \in \C^{N_H}$ with $|\boldsymbol{\alpha}|= K$. 
For brevity, we define again $z_\alpha:=\sum_{m=1}^{N_H} \boldsymbol{\alpha}_m \phi_m$. First, we observe with the Young inequality, $2 |a| \hspace{2pt} |b| \le |a|^2 +|b|^2$ that
\begin{eqnarray*}
\lefteqn{\frac{\beta}{4} \Re \langle (\left|z_\alpha \right|^2 + |u^{\CN n}\LOD|^2 )  \left( z_\alpha + u^{\CN n}\LOD \right) , z_\alpha \rangle}\\
 &\ge&
\frac{\beta}{4} \langle \left|z_\alpha \right|^2 + |u^{\CN n}\LOD|^2   , |z_\alpha|^2  \rangle
- \frac{\beta}{4} \langle \left|z_\alpha \right|^2 + |u^{\CN n}\LOD|^2  , |u^{\CN n}\LOD| \hspace{2pt} |z_\alpha|  \rangle \\
&\ge& \frac{\beta}{4}  \langle |z_\alpha|^2 + |u^{\CN n}\LOD|^2 , |z_\alpha|^2 \rangle
- \frac{\beta}{8} \langle ( \left|z_\alpha \right|^2  + |u^{\CN n}\LOD|^2 ) ,  \left|z_\alpha \right|^2  + |u^{\CN n}\LOD|^2 \rangle\\
&=& 
\frac{\beta}{8}  \langle |z_\alpha|^2 + |u^{\CN n}\LOD|^2 , |z_\alpha|^2 \rangle
- \frac{\beta}{8} \langle ( \left|z_\alpha \right|^2  + |u^{\CN n}\LOD|^2 ) ,   |u^{\CN n}\LOD|^2 \rangle\\
&=& \frac{\beta}{8}  \langle |z_\alpha|^2, |z_\alpha|^2 \rangle  - \frac{\beta}{8} \langle |u^{\CN n}\LOD|^2 , |u^{\CN n}\LOD|^2 \rangle
=  \frac{\beta}{8} \left( \| z_{\alpha} \|_{L^4(\D)}^4 -\| u^{\CN n}\LOD \|_{L^4(\D)}^4  \right).
\end{eqnarray*}
Using this inequality, we get
\begin{eqnarray*}
\lefteqn{\Re( g(\boldsymbol{\alpha}) \cdot \boldsymbol{\alpha} ) =
\frac{1}{2}\| \nabla z_\alpha \|^2
+ \frac{1}{2} \langle V z_\alpha , z_\alpha \rangle
+ \Re( F \cdot \boldsymbol{\alpha} )} \\
&\enspace& \qquad + 
\frac{\beta}{4} \Re \langle (\left|z_\alpha \right|^2 + |u^{\CN n}\LOD|^2 )  \left( z_\alpha + u^{\CN n}\LOD
\right)
, z_\alpha \rangle\\
&\ge& \frac{1}{2}\| \nabla z_\alpha \|^2
- \left( \frac{1}{2} \| V \|_{L^{\infty}(\mathcal{D})} + \tau^{-1} \right)
 \|  u^{\CN n}\LOD \| \hspace{2pt} \| z_\alpha \|
- \frac{1}{2} \| \nabla u^{\CN n}\LOD \| \hspace{2pt} \| \nabla  z_\alpha \|
\\
&\enspace& \qquad 
+ \frac{\beta}{8} \left( \| z_{\alpha} \|_{L^4(\D)}^4 -\| u^{\CN n}\LOD \|_{L^4(\D)}^4  \right)
\\
&\ge& \frac{1}{2}\| \nabla z_\alpha \|^2
- \left( \frac{1}{2} \| V \|_{L^{\infty}(\mathcal{D})} + \tau^{-1} \right)
 \|  u^{\CN n}\LOD \| \hspace{2pt} \| z_\alpha \|
- \frac{1}{2} \| \nabla u^{\CN n}\LOD \| \hspace{2pt} \| \nabla  z_\alpha \|
- \frac{\beta}{8} \| u^{\CN n}\LOD \|_{L^4(\D)}^4
\\
&\ge& \frac{1}{2}\| \nabla z_\alpha \| \hspace{2pt} \left( \| \nabla z_\alpha \|
- \sqrt{2} \hspace{2pt} \mbox{\rm diam}(\mathcal{D}) 
\left( \frac{1}{2} \| V \|_{L^{\infty}(\mathcal{D})} + \tau^{-1} \right)
\| u^{\CN n}\LOD \| \hspace{2pt}
- \| \nabla u^{\CN n}\LOD \| \right)\\
&\enspace& \qquad 
- \frac{\beta}{8} \| u^{\CN n}\LOD \|_{L^4(\D)}^4
\\
&\ge& \frac{1}{2} \| \nabla z_\alpha \| \hspace{2pt} \left( \| \nabla z_\alpha \| - C_1 \right) - C_2,
\end{eqnarray*}
for some $\boldsymbol{\alpha}$-independent positive constants $C_1$ and $C_2$. Exploiting the equivalence of norms in finite dimensional Hilbert spaces we conclude the existence of (new) $\boldsymbol{\alpha}$-independent positive constants such that
$\Re( g(\boldsymbol{\alpha}) \cdot \boldsymbol{\alpha} ) \ge C_3 |\boldsymbol{\alpha}| \left( |\boldsymbol{\alpha}| - C_1 \right) - C_2$. Hence, for all sufficiently large $|\boldsymbol{\alpha}|$ we have $\Re( g(\boldsymbol{\alpha}) \cdot \boldsymbol{\alpha} ) > 0$ and therefore with the Browder fixed point theorem the existence of at least one solution $u^{\CN n+1}\LOD$ to \eqref{Crank}.
\end{proof}


\begin{thebibliography}{10}

\bibitem{AbH17}
A.~Abdulle and P.~Henning.
\newblock Localized orthogonal decomposition method for the wave equation with
  a continuum of scales.
\newblock {\em Math. Comp.}, 86(304):549--587, 2017.

\bibitem{Optics}
G.~P. Agrawal.
\newblock Nonlinear fiber optics.
\newblock In P.~L. Christiansen, M.~P. S{\o}rensen, and A.~C. Scott, editors,
  {\em Nonlinear Science at the Dawn of the 21st Century}, pages 195--211,
  Berlin, Heidelberg, 2000. Springer Berlin Heidelberg.

\bibitem{Akrivis1991}
G.~D. Akrivis, V.~A. Dougalis, and O.~A. Karakashian.
\newblock On fully discrete {G}alerkin methods of second-order temporal
  accuracy for the nonlinear {S}chr\"odinger equation.
\newblock {\em Numer. Math.}, 59(1):31--53, 1991.

\bibitem{Exact2010}
T.~Aktosun, T.~Busse, F.~Demontis, and C.~van~der Mee.
\newblock Exact solutions to the nonlinear {S}chr\"odinger equation.
\newblock In {\em Topics in operator theory. {V}olume 2. {S}ystems and
  mathematical physics}, volume 203 of {\em Oper. Theory Adv. Appl.}, pages
  1--12. Birkh\"auser Verlag, Basel, 2010.

\bibitem{ANTOINE20132621}
X.~Antoine, W.~Bao, and C.~Besse.
\newblock Computational methods for the dynamics of the nonlinear
  {S}chr\"odinger/{G}ross-{P}itaevskii equations.
\newblock {\em Comput. Phys. Commun.}, 184(12):2621--2633, 2013.

\bibitem{BaY14}
R.~E. Bank and H.~Yserentant.
\newblock On the {$H^1$}-stability of the {$L_2$}-projection onto finite
  element spaces.
\newblock {\em Numer. Math.}, 126(2):361--381, 2014.

\bibitem{BaC12}
W.~Bao and Y.~Cai.
\newblock Uniform error estimates of finite difference methods for the
  nonlinear {S}chr\"odinger equation with wave operator.
\newblock {\em SIAM J. Numer. Anal.}, 50(2):492--521, 2012.

\bibitem{BaoNum}
W.~Bao and Y.~Cai.
\newblock Mathematical theory and numerical methods for {B}ose-{E}instein
  condensation.
\newblock {\em Kinet. Relat. Models}, 6(1):1--135, 2013.

\bibitem{BaC13}
W.~Bao and Y.~Cai.
\newblock Optimal error estimates of finite difference methods for the
  {G}ross-{P}itaevskii equation with angular momentum rotation.
\newblock {\em Math. Comp.}, 82(281):99--128, 2013.

\bibitem{Spectral}
W.~Bao, S.~Jin, and P.~A. Markowich.
\newblock Numerical study of time-splitting spectral discretizations of
  nonlinear {S}chr\"odinger equations in the semiclassical regimes.
\newblock {\em SIAM J. Sci. Comput.}, 25(1):27--64, 2003.

\bibitem{Besse}
C.~Besse.
\newblock A relaxation scheme for the nonlinear {S}chr\"odinger equation.
\newblock {\em SIAM J. Numer. Anal.}, 42(3):934--952, 2004.

\bibitem{BDD18}
C.~Besse, S.~Descombes, G.~Dujardin, and I.~Lacroix-Violet.
\newblock Energy-preserving methods for nonlinear {S}chr\"{o}dinger equations.
\newblock {\em IMA J. Numer. Anal.}, 41(1):618--653, 2021.

\bibitem{BrS08}
S.~C. Brenner and L.~R. Scott.
\newblock {\em The mathematical theory of finite element methods}, volume~15 of
  {\em Texts in Applied Mathematics}.
\newblock Springer, New York, third edition, 2008.

\bibitem{Browder1965}
F.~E. Browder.
\newblock Existence and uniqueness theorems for solutions of nonlinear boundary
  value problems.
\newblock In {\em Proc. {S}ympos. {A}ppl. {M}ath., {V}ol. {XVII}}, pages
  24--49. Amer. Math. Soc., Providence, R.I., 1965.

\bibitem{Cazenave}
T.~Cazenave.
\newblock {\em Semilinear {S}chr\"odinger equations}, volume~10 of {\em Courant
  Lecture Notes in Mathematics}.
\newblock New York University, Courant Institute of Mathematical Sciences, New
  York; American Mathematical Society, Providence, RI, 2003.

\bibitem{CCW20}
J.~Cui, W.~Cai, and Y.~Wang.
\newblock A linearly-implicit and conservative {F}ourier pseudo-spectral method
  for the 3{D} {G}ross-{P}itaevskii equation with angular momentum rotation.
\newblock {\em Comput. Phys. Commun.}, 253:107160, 26, 2020.

\bibitem{ENGWER2019123}
C.~Engwer, P.~Henning, A.~M{\aa}lqvist, and D.~Peterseim.
\newblock Efficient implementation of the localized orthogonal decomposition
  method.
\newblock {\em Comput. Methods Appl. Mech. Engrg.}, 350:123--153, 2019.

\bibitem{GaHeVe18}
D.~Gallistl, P.~Henning, and B.~Verf\"{u}rth.
\newblock Numerical homogenization of {${\bf{H}}(\rm curl)$}-problems.
\newblock {\em SIAM J. Numer. Anal.}, 56(3):1570--1596, 2018.

\bibitem{GaP15}
D.~Gallistl and D.~Peterseim.
\newblock Stable multiscale {P}etrov-{G}alerkin finite element method for high
  frequency acoustic scattering.
\newblock {\em Comput. Methods Appl. Mech. Engrg.}, 295:1--17, 2015.

\bibitem{Gri85}
P.~Grisvard.
\newblock {\em Elliptic problems in nonsmooth domains}, volume~24 of {\em
  Monographs and Studies in Mathematics}.
\newblock Pitman (Advanced Publishing Program), Boston, MA, 1985.

\bibitem{Gro61}
E.~P. Gross.
\newblock Structure of a quantized vortex in boson systems.
\newblock {\em Nuovo Cimento (10)}, 20:454--477, 1961.

\bibitem{GravityWaves}
H.~Hasimoto and H.~Ono.
\newblock Nonlinear modulation of gravity waves.
\newblock {\em Journal of the Physical Society of Japan}, 33(3):805--811, 1972.

\bibitem{HHM16}
F.~Hellman, P.~Henning, and A.~M{\aa}lqvist.
\newblock Multiscale mixed finite elements.
\newblock {\em Discrete Contin. Dyn. Syst. Ser. S}, 9(5):1269--1298, 2016.

\bibitem{HeM14}
P.~Henning and A.~M{\aa}lqvist.
\newblock Localized orthogonal decomposition techniques for boundary value
  problems.
\newblock {\em SIAM J. Sci. Comput.}, 36(4):A1609--A1634, 2014.

\bibitem{HeM17}
P.~Henning and A.~M{\aa}lqvist.
\newblock The finite element method for the time-dependent {G}ross-{P}itaevskii
  equation with angular momentum rotation.
\newblock {\em SIAM J. Numer. Anal.}, 55(2):923--952, 2017.

\bibitem{HeMaPe14b}
P.~Henning, A.~M{\aa}lqvist, and D.~Peterseim.
\newblock A localized orthogonal decomposition method for semi-linear elliptic
  problems.
\newblock {\em ESAIM Math. Model. Numer. Anal.}, 48(5):1331--1349, 2014.

\bibitem{HeMaPe14}
P.~Henning, A.~M{\aa}lqvist, and D.~Peterseim.
\newblock Two-level discretization techniques for ground state computations of
  {B}ose-{E}instein condensates.
\newblock {\em SIAM J. Numer. Anal.}, 52(4):1525--1550, 2014.

\bibitem{HePer16}
P.~Henning and A.~Persson.
\newblock A multiscale method for linear elasticity reducing {P}oisson locking.
\newblock {\em Comput. Methods Appl. Mech. Engrg.}, 310:156--171, 2016.

\bibitem{HePer20}
P.~Henning and A.~Persson.
\newblock Computational homogenization of time-harmonic {M}axwell's equations.
\newblock {\em SIAM J. Sci. Comput.}, 42(3):B581--B607, 2020.

\bibitem{HeP13}
P.~Henning and D.~Peterseim.
\newblock Oversampling for the {M}ultiscale {F}inite {E}lement {M}ethod.
\newblock {\em SIAM Multiscale Model. Simul.}, 11(4):1149--1175, 2013.

\bibitem{NonlinearCN}
P.~Henning and D.~Peterseim.
\newblock Crank-{N}icolson {G}alerkin approximations to nonlinear
  {S}chr\"odinger equations with rough potentials.
\newblock {\em Math. Models Methods Appl. Sci.}, 27(11):2147--2184, 2017.

\bibitem{NLSComparison}
P.~Henning and J.~W\"{a}rneg{\aa}rd.
\newblock Numerical comparison of mass-conservative schemes for the
  {G}ross-{P}itaevskii equation.
\newblock {\em Kinet. Relat. Models}, 12(6):1247--1271, 2019.

\bibitem{H1Est}
P.~Henning and J.~W\"{a}rneg{\aa}rd.
\newblock A note on optimal {$H^1$}-error estimates for {C}rank-{N}icolson
  approximations to the nonlinear {S}chr\"{o}dinger equation.
\newblock {\em BIT}, 61(1):37--59, 2021.

\bibitem{KaM99}
O.~Karakashian and C.~Makridakis.
\newblock A space-time finite element method for the nonlinear {S}chr\"odinger
  equation: the continuous {G}alerkin method.
\newblock {\em SIAM J. Numer. Anal.}, 36(6):1779--1807, 1999.

\bibitem{LSY01}
E.~H. Lieb, R.~Seiringer, and J.~Yngvason.
\newblock A rigorous derivation of the {G}ross-{P}itaevskii energy functional
  for a two-dimensional {B}ose gas.
\newblock {\em Comm. Math. Phys.}, 224(1):17--31, 2001.
\newblock Dedicated to Joel L. Lebowitz.

\bibitem{Lub08}
C.~Lubich.
\newblock On splitting methods for {S}chr\"odinger-{P}oisson and cubic
  nonlinear {S}chr\"odinger equations.
\newblock {\em Math. Comp.}, 77(264):2141--2153, 2008.

\bibitem{Mai21}
R.~Maier.
\newblock A high-order approach to elliptic multiscale problems with general
  unstructured coefficients.
\newblock {\em SIAM J. Numer. Anal.}, 59(2):1067--1089, 2021.

\bibitem{MaierPet19}
R.~Maier and D.~Peterseim.
\newblock Explicit computational wave propagation in micro-heterogeneous media.
\newblock {\em BIT}, 59(2):443--462, 2019.

\bibitem{MaPer17}
A.~M{\aa}lqvist and A.~Persson.
\newblock A generalized finite element method for linear thermoelasticity.
\newblock {\em ESAIM Math. Model. Numer. Anal.}, 51(4):1145--1171, 2017.

\bibitem{MaPer18}
A.~M{\aa}lqvist and A.~Persson.
\newblock Multiscale techniques for parabolic equations.
\newblock {\em Numer. Math.}, 138(1):191--217, 2018.

\bibitem{LocalElliptic}
A.~M{\aa}lqvist and D.~Peterseim.
\newblock Localization of elliptic multiscale problems.
\newblock {\em Math. Comp.}, 83(290):2583--2603, 2014.

\bibitem{MaPet15}
A.~M{\aa}lqvist and D.~Peterseim.
\newblock Computation of eigenvalues by numerical upscaling.
\newblock {\em Numer. Math.}, 130(2):337--361, 2015.

\bibitem{MaP17}
A.~M{\aa}lqvist and D.~Peterseim.
\newblock Generalized finite element methods for quadratic eigenvalue problems.
\newblock {\em ESAIM Math. Model. Numer. Anal.}, 51(1):147--163, 2017.

\bibitem{MaP21}
A.~M{\aa}lqvist and D.~Peterseim.
\newblock {\em Numerical homogenization by localized orthogonal decomposition},
  volume~5 of {\em SIAM Spotlights}.
\newblock Society for Industrial and Applied Mathematics (SIAM), Philadelphia,
  PA, [2021] \copyright 2021.

\bibitem{VerfOhl17}
M.~Ohlberger and B.~Verf{\"u}rth.
\newblock Localized orthogonal decomposition for two-scale {H}elmholtz-type
  problems.
\newblock {\em AIMS Mathematics}, 2(3):458--478, 2017.

\bibitem{Peterseim2016341}
D.~Peterseim.
\newblock Variational multiscale stabilization and the exponential decay of
  fine-scale correctors.
\newblock {\em Lecture Notes in Computational Science and Engineering},
  114:341--367, 2016.

\bibitem{Pet17}
D.~Peterseim.
\newblock Eliminating the pollution effect in {H}elmholtz problems by local
  subscale correction.
\newblock {\em Math. Comp.}, 86(305):1005--1036, 2017.

\bibitem{PetSch17}
D.~Peterseim and M.~Schedensack.
\newblock Relaxing the {CFL} condition for the wave equation on adaptive
  meshes.
\newblock {\em J. Sci. Comput.}, 72(3):1196--1213, 2017.

\bibitem{Pit61}
L.~P. Pitaevskii.
\newblock {\em Vortex lines in an imperfect {B}ose gas}.
\newblock Number~13. Soviet Physics JETP-USSR, 1961.

\bibitem{PiS03}
L.~P. Pitaevskii and S.~Stringari.
\newblock {\em {B}ose-{E}instein Condensation}.
\newblock Oxford University Press, Oxford, 2003.

\bibitem{Sanz-SernaNLCN}
J.~M. Sanz-Serna.
\newblock Methods for the numerical solution of the nonlinear {S}chr\"odinger
  equation.
\newblock {\em Math. Comp.}, 43(167):21--27, 1984.

\bibitem{Sanz-Serna1988}
J.~M. Sanz-Serna.
\newblock Runge-{K}utta schemes for {H}amiltonian systems.
\newblock {\em BIT Numerical Mathematics}, 28(4):877--883, Dec 1988.

\bibitem{Sanz-Serna}
J.~M. Sanz-Serna and J.~G. Verwer.
\newblock Conservative and nonconservative schemes for the solution of the
  nonlinear {S}chr\"odinger equation.
\newblock {\em IMA J. Numer. Anal.}, 6(1):25--42, 1986.

\bibitem{Tha12b}
M.~Thalhammer.
\newblock Convergence analysis of high-order time-splitting pseudospectral
  methods for nonlinear {S}chr\"odinger equations.
\newblock {\em SIAM J. Numer. Anal.}, 50(6):3231--3258, 2012.

\bibitem{Tou91}
Y.~Tourigny.
\newblock Optimal {$H^1$} estimates for two time-discrete {G}alerkin
  approximations of a nonlinear {S}chr\"odinger equation.
\newblock {\em IMA J. Numer. Anal.}, 11(4):509--523, 1991.

\bibitem{Verfuerth2017}
B.~Verf\"urth.
\newblock Numerical homogenization for indefinite {H}(curl)-problems.
\newblock In J.~U. K.~Mikula, D.~Sevcovic, editor, {\em Proceedings of Equadiff
  2017 conference}, pages 137--146, 2017.

\bibitem{Verwer1984}
J.~G. Verwer and J.~M. Sanz-Serna.
\newblock Convergence of method of lines approximations to partial differential
  equations.
\newblock {\em Computing}, 33(3-4):297--313, 1984.

\bibitem{Wang}
J.~Wang.
\newblock A new error analysis of {C}rank-{N}icolson {G}alerkin {FEM}s for a
  generalized nonlinear {S}chr\"odinger equation.
\newblock {\em J. Sci. Comput.}, 60(2):390--407, 2014.

\bibitem{WuZ21}
Z.~Wu and Z.~Zhang.
\newblock Convergence analysis of an operator--compressed multiscale finite
  element method for {S}chr\"odinger equations with multiscale potentials.
\newblock preprint, University of Hong Kong, 2021.

\bibitem{FluidReview}
H.~C. Yuen and B.~M. Lake.
\newblock Instabilities of waves on deep water.
\newblock {\em Annual Review of Fluid Mechanics}, 12(1):303--334, 1980.

\bibitem{DeepWater}
V.~Zakharov.
\newblock {\em Stability of periodic waves of finite amplitude on a surface of
  deep fluid}, volume~9.
\newblock 03 1968.

\bibitem{SingleSoliton}
V.~E. Zakharov and A.~B. Shabat.
\newblock Exact theory of two-dimensional self-focusing and one-dimensional
  self-modulation of waves in nonlinear media.
\newblock {\em \v Z. \`Eksper. Teoret. Fiz.}, 61(1):118--134, 1971.

\bibitem{zouraris_2001}
G.~E. Zouraris.
\newblock On the convergence of a linear two-step finite element method for the
  nonlinear {S}chr\"odinger equation.
\newblock {\em M2AN Math. Model. Numer. Anal.}, 35(3):389--405, 2001.

\bibitem{Zou20}
G.~E. Zouraris.
\newblock Error estimation of the {R}elaxation {F}inite {D}ifference {S}cheme
  for the nonlinear {S}chr\"odinger {E}quation.
\newblock ArXiv e-print 2002.09605, 2020.

\end{thebibliography}
\end{document}